\newcommand{\customlabel}[2]{%
\protected@write \@auxout {}{\string \newlabel {#1}{{#2}{}}}}
\newcommand{\X}{\mathbb{X}}
\newcommand{\Y}{\mathbb{Y}}
\newcommand{\R}{\mathbb{R}}
\newcommand{\Rn}{\mathbb{R}^n}
\newcommand{\C}{\mathcal{C}}
\newcommand{\K}{\mathcal{K}}
\newcommand{\N}{\mathcal{N}}
\newcommand{\graph}[1]{\mathrm{Graph}(#1)}
\newcommand{\cseg}[1]{\mathrm{Seg}^c_{#1}}
\newcommand{\csec}[2]{\mathrm{Sec}_{#1}^{#2}}
\newcommand{\T}{\mathcal{T}}
\newcommand{\diam}[1]{\mathrm{diam}(#1)}
\newcommand{\dist}[2]{\mathrm{dist}(#1,#2)}
\newcommand{\Int}[1]{\mathrm{Int}\left( #1 \right)}
\newcommand{\Dom}[1]{\mathrm{Dom}(#1)}
\newcommand{\cexp}[2]{\mathrm{exp}^c_{#1}(#2)}
\newcommand{\Fx}{F_{X_0 X_1}}
\newcommand{\Fxp}{F_{X_0' X_1'}}
\newcommand{\MTW}{\mathrm{MTW}}
\newcommand{\clip}{\| c \|_{\mathrm{Lip}}}
\numberwithin{equation}{section}
\newtheorem{Thm}{Theorem}[section]
\newtheorem*{Thm*}{Theorem}
\newtheorem{Lem}[Thm]{Lemma}
\newtheorem{Prop}[Thm]{Proposition}
\newtheorem{Cor}[Thm]{Corollary}
\theoremstyle{definition}
\newtheorem{Def}[Thm]{Definition}
\newtheorem*{Prbm*}{Problem}
\newtheorem{Cond}{Condition}
\theoremstyle{remark}
\newtheorem{Rmk}[Thm]{Remark}
\title{An alternative definition for $c$-convex functions and another synthetic statement of the MTW condition}
\author{Seonghyeon Jeong\thanks{E-mail: jeongs1466@gmail.com}}
\date{}
\begin{document}

\maketitle

\begin{abstract}
The main theorem of this paper states that the $c$-convexity and the alternative $c$-convexity are equivalent if and only if the cost function $c$ satisfies MTW condition. The alternative $c$-convex function is an analogy of the definition of the convex function that is using the inequality $\phi(tx_1 + (1-t)x_0) \leq t\phi(x_1) +(1-t) \phi(x_0)$. we study properties of the alternative $c$-convex functions and MTW condition, then prove the main theorem. 
\end{abstract}

\section{Introduction}

The regularity theory of optimal transport problem is closely related to the analysis of Monge-Amp\`ere type equations, which are known to be fully non-linear degenerate elliptic partial differential equations.
\begin{equation*}
\det( D^2 \phi + \mathcal{A}(x,Du) ) = f(x, u, Du).
\end{equation*}
When the cost function $c$ is given by the distance square of the two points $\frac{1}{2} \| x - y \|^2$, the corresponding equation is the Monge-Amp\`ere equation, and the optimal transport plan is induced from the gradient of the convex potential $\phi + \frac{\| x \|^2}{2}$ that solves a second boundary value problem of the Monge-Amp\`ere equation, where the regularity is studied in, for example, but not limited to, \cite{Caf90},\cite{Caf91},\cite{Caf92},\cite{FK10},\cite{PF13monge},\cite{W96}. With general cost functions, the regularity depends delicately on the structure of the cost function.

A key break through for the regularity of optimal transport problems with general costs was made in \cite{MTW}, where a sign condition on a (2,2)-tensor was discovered. Named after the authors of \cite{MTW}, we call this tensor the MTW tensor and the condition the MTW condition (see \eqref{eqn: MTW tensor} and Definition \ref{def: MTW condition}). The MTW condition is also known as the A3 condition in the optimal transport literature. It is proved later by Loeper \cite{Loe09} that, under suitable situation, the MTW condition is in fact a necessary and sufficient condition for the H\"older regularity of optimal transport plans. As such, the MTW condition is regarded as an essential condition for the regularity theory, and is used in many regularity researches for optimal transport problem and Monge-Amp\`ere type equations (for instance, see \cite{Liu09},\cite{GK15},\cite{PF13},\cite{FKM13},\cite{CW16}).

When the MTW condition was first discovered, it was not clear what the MTW condition means due to the form of the condition. The formula of the MTW tensor already gives that the MTW condition implies some convexity of some matrix $\mathcal{A}$ in certain directions. Still it was not clear why the MTW condition matters in the regularity theory of optimal transport. Therefore, there were studies which tried to find another form of the MTW condition that provides more insightful interpretation. In \cite{KM10}, the MTW condition is expressed as a non-negativity condition of the curvature tensor of a pseudo-Riemmanian metric that is induced from a metric which is constructed using the cost function. In addition, \cite{KZ20} shows that the MTW condition can also be expressed using the curvature tensor of a K\"ahler metric in some special cases. Loeper found a synthetic expression of the MTW condition, which we call Loeper's property (see Definition \ref{def: Loeper}), that can be interpreted as the convexity of the $c$-subdifferentials of $c$-convex functions at a point \cite{Loe09}. Another synthetic expression that can be interpreted as a quantitative version of Loeper's property was found by Guillen and Kitagawa  (\cite{GK15}, \cite{GK17}) independently. In addition, Loeper and Trudinger showed that Loeper's property with a small error is still equivalent to the MTW condition. In the recent work of Rankin (\cite{Ran23}), it has been shown that the MTW condition is also equivalent to the implication between the monotonicity and the $c$-monotonicity.

In this paper, we introduce another condition, that is equivalent to Loeper's property, using a new type of functions which we call alternative $c$-convex functions. Motivation of the alternative $c$-convex function is the definition of the convex function that uses the segment connecting two points on the graph of the convex function: $\phi$ is convex if
\begin{equation*}
    \phi(tx_1 + (1-t)x_0) \leq t \phi(x_1) + (1-t) \phi(x_0)
\end{equation*}
for any $x_0, x_1 \in \Dom{\phi}$. This inequality can be extended to the whole domain by defining the function on the right hand side to be $\infty$ outside the segment connecting $x_0$ and $x_1$:
\begin{equation*}
    \phi(x) \leq \left\{ \begin{matrix}
        t \phi(x_0) + (1-t) \phi(x_1) & x = tx_1 + (1-t)x_0 \\
        \infty & \textrm{otherwise}
    \end{matrix}\right. .
\end{equation*}
Then, the function on the right hand side of the above inequality can be expressed as follows
\begin{equation*}
    \sup\{ \langle x, y \rangle + h | \langle x_i, y \rangle + h \leq \phi(x_i), i = 0,1 \}.
\end{equation*}
Hence, recalling that $\langle x,y \rangle$ corresponds to $-c(x,y)$ in the definition of $c$-convex function, we may replace $\langle x, y \rangle$ with $-c(x,y)$ and obtain the definition for the alternative $c$-convex functions. 

It is very well known that the definitions of the convex function using the segments and using the supporting planes are equivalent. However, the alternative $c$-convex functions and the $c$-convex functions does not have to be equivalent definitions. Indeed, if $c(x,y) = \| x-y\|^p$ for $p>2$ defined on $\Rn \times \Rn$, one can easily compute that we have
\begin{equation*}
    \sup\{ -c(x,y) + h | -c(x,y) + h \leq \phi(x_i), i = 0,1 \} = \left\{ \begin{matrix}
        \phi(x_i) & i = 0,1 \\
        \infty & \textrm{otherwise}
    \end{matrix} \right. .
\end{equation*}
In particular, any function is alternative $c$-convex, while it is obvious that $c$-convex functions are much more restrictive. Thus the $c$-convexity and the alternative $c$-convex do not necessarily equivalent, and we can ask when they be equivalent.

Interestingly, the condition on the cost function that gives the equivalence of the $c$-convexity and the alternative $c$-convexity is the MTW condition. The MTW condition arises from the regularity theory of the optimal transport problem, while the alternative $c$-convex function arises from the simple observation about the analogous definition of the convex function. The following theorem, which is the main theorem of this paper, yields, however, that the MTW condition and the two analogous definitions of the convex functions ($c$convex functions and alternative $c$-convex functions) are strongly related.

\begin{Thm}[Main theorem]\label{thm: main theorem}
Let $\X$ and $\Y$ be compact subsets of $\Rn$ with non-empty interior, and let $c: \X \times \Y \to \R$ be a $C^2$ function that satisfies bi-twist, non-degenerate. Also, assume that $\X$ and $\Y$ are $c$-convex with respect to each other. Then we have the following:
\begin{enumerate}
	\item If $c$ satisfies Loeper's property, then a function is $c$-convex if and only if it is alternative $c$-convex.
	\item If $c$ does not satisfy Loeper's property, then there exists an alternative $c$-convex function that is not $c$-convex function.
\end{enumerate}
\end{Thm}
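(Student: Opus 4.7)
The plan is to treat the two parts of the theorem separately. For part (1) I would split the biconditional into two implications, each of which uses Loeper's property in a distinct way. For part (2) I would exploit the failure of Loeper's property at a specific configuration of points to engineer an explicit counterexample; this construction is where I expect the main obstacle of the theorem to lie.

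For the direction ``$c$-convex $\Rightarrow$ alternative $c$-convex'' in part (1), my first step would be to reduce to the case of a single $c$-support. Since a $c$-convex function $\phi$ is a pointwise supremum of functions of the form $\phi_y(x) = -c(x,y) + a_y$, and since the alternative $c$-convex inequality aggregates correctly under taking suprema (using $\phi_y \leq \phi$ at the endpoints), it suffices to verify the inequality for each individual $\phi_y$ along any $c$-segment. This reduces to a statement purely about the cost function $c$ along $c$-segments, whose equivalence with Loeper's property is the synthetic content of that condition; I would extract it from the maximum-principle formulation of Loeper's property together with a convex-combination argument.

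For the reverse direction ``alternative $c$-convex $\Rightarrow$ $c$-convex'' in part (1), I would construct a $c$-support at an arbitrary interior point $x_0 \in \Int{\X}$. Using bi-twist, non-degeneracy, and the local Lipschitz regularity implied by the Jensen-type inequality, I would define $\bar y \in \Y$ as the unique solution of $-D_x c(x_0,\bar y) = p_0$ for a subgradient $p_0 \in \partial \phi(x_0)$, and set $a = \phi(x_0) + c(x_0,\bar y)$. The remaining task would be the pointwise inequality $\phi(x) \geq -c(x,\bar y) + a$ for every $x \in \X$. Since $\X$ is $c$-convex with respect to $\Y$, one can join $x_0$ to any $x$ by a $c$-segment based at $\bar y$; substituting into the alternative $c$-convex inequality, using that equality holds to first order at $x_0$ by construction, and passing to the limit would convert the infinitesimal matching at $x_0$ into the desired global lower bound.

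For part (2), the failure of Loeper's property supplies base points $x_0, x_1, \bar y$, a parameter $t_0 \in (0,1)$, and an auxiliary point $y \in \Y$ such that along the $c$-segment $x_t$ with base $\bar y$ the quantity $-c(x_t, y) + c(x_t, \bar y)$ violates the Jensen-type inequality at $t = t_0$. I would then take $\phi$ to be essentially $\max\{-c(\cdot, \bar y) + a_0,\, -c(\cdot, y) + a_1\}$, with constants tuned so that neither single $c$-support can majorize $\phi$ at $x_{t_0}$, possibly modified on a small neighborhood in order to preserve the alternative $c$-convex inequality along every $c$-segment. Failure of $c$-convexity at $x_{t_0}$ would follow because any hypothetical $c$-support there would itself have to satisfy a Jensen-type inequality ruled out by the chosen configuration. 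The hardest step, and in my view the main technical obstacle of the whole theorem, is verifying alternative $c$-convexity \emph{globally} for this $\phi$: the inequality must hold along every $c$-segment in $\X$, not only the one witnessing the failure of Loeper's property, which will likely require either a careful compactness argument to control the construction away from $x_{t_0}$ or a direct case analysis of how the two supporting pieces interpolate under $c$-segments based at arbitrary points.
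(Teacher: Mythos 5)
Your plan for the direction ``$c$-convex $\Rightarrow$ alternative $c$-convex'' misattributes the role of Loeper's property: this implication holds \emph{without} any MTW-type hypothesis. If $\phi$ is $c$-convex and $x'\in\X$, pick a $c$-support $-c(\cdot,y)+c(x',y)+\phi(x')\leq\phi$; evaluating at $x_0,x_1$ shows this $c$-affine function participates in the supremum defining $F_{X_0X_1}$, hence $F_{X_0X_1}(x')\geq\phi(x')$. No reduction to ``verifying the inequality along $c$-segments'' is needed, and in fact the alternative $c$-convex inequality is a pointwise bound over all of $\X$, not only on $c$-segments, so the reduction you sketch does not match the definition.

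For the reverse direction your skeleton (differentiability point, $\bar y$ from $\nabla\phi(x_0)$, then upgrade the first-order matching to a global support) agrees with the paper's, but the sentence ``passing to the limit would convert the infinitesimal matching at $x_0$ into the desired global lower bound'' is precisely the content of the theorem, not a step in it. The actual argument requires establishing first that $\nabla\phi(x_0)\in[\Y]_{x_0}$ at all (you assume this implicitly by solving $-D_xc(x_0,\bar y)=p_0$), then a delicate analysis of the sublevel set $\{\phi\leq f\}$ for the candidate support $f$: one shows that $[\csec{\phi}{f}]_{y_0}$ is exactly the translate of the normal cone $M_0\N([\Y]_{x_0};q_0)+p_0$, deduces $q_0$ is an exposed point of $[\Y]_{x_0}$, and derives a contradiction from the derivative of $F_{X_0'X_1'}$ along a $c$-segment into that cone. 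Loeper's property is used repeatedly through Lemma \ref{lem: F=-c on segment}, Lemma \ref{lem: strictly ordered F}, and Lemma \ref{lem: convex section}. A bare limit argument does not close this gap.

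The most serious problem is part (2). A function of the form $\max\{-c(\cdot,\bar y)+a_0,\ -c(\cdot,y)+a_1\}$ is by definition a $c$-convex function (it is a supremum over a two-element family of $c$-affine functions), so it can never serve as the desired counterexample, no matter how you tune $a_0,a_1$; and at every point it is touched from below by one of its two $c$-affine pieces, so the statement that ``neither single $c$-support can majorize $\phi$ at $x_{t_0}$'' is vacuous. The hedge ``possibly modified on a small neighborhood'' is where the entire construction must live, and it is left completely unspecified. The paper's construction is of a different character: it defines $\phi_\epsilon$ only on a small ball $S_0$ (where it is $-c(\cdot,y_0)+\epsilon$, strictly \emph{above} the $c$-affine function $-c(\cdot,y_0)$) together with the single extra point $x_1$ (where it is $-c(x_1,y_0)$), verifies alternative $c$-convexity on a subregion $R$ via careful cone estimates (Lemma \ref{lem: cone in c-affine section}), and only then extends to all of $\X$ by taking the infimum of $c$-chords (Lemma \ref{lem: extention of alt. c-conv}). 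The non-$c$-convexity then follows because any $c$-support at an interior point of $R_0$ would have to have $y=y_0$ by the twist condition, hence lie at height $\geq -c(\cdot,y_0)+\epsilon$, contradicting the value at $x_1$. This is not a perturbation of a max of two $c$-affine functions, and your proposal contains no mechanism that would produce a function that is alternative $c$-convex but avoids being a supremum of $c$-affine functions.
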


The main theorem states that the definitions of the $c$-convex functions and the alternative $c$-convex functions are equivalent if and only if the cost function $c$ satisfies Loeper's property. Since Loeper's property is equivalent to the MTW condition under enough regularity, we also have that the equivalence of $c$-convexity and alternative $c$-convexity is also equivalent to the original MTW condition under enough regularity. The conditions which are used in the statement of the main theorem Theorem \ref{thm: main theorem} are explained in section \ref{sec: cost function}.

We would like to emphasize a few points about the main theorem. Unlike the existing statement that are equivalent to the MTW condition, Theorem \ref{thm: main theorem} suggest another condition that is equivalent to the MTW condition, but \emph{without any derivatives}. In addition, the proof of the main theorem requires only $C^2$ regularity in the sense that $D^2_{xy}c$ and $D^2_{yx}c$ are exist and continuous, and $D^2_{xy}c=(D^2_{yx}c)^T$. In particular, we do not use $D^2_{xx}c$ and $D^2_{yy}c$ to show the main theorem. 

This paper is organized as follows. In Section \ref{sec: cost function}, we impose conditions on the cost function and the spaces $\X$ and $\Y$ on which the cost function is defined. Then we prove several lemmas about the $c$-affine functions. In Section \ref{sec: c-chord}, we define the $c$-chord and show several properties of the $c$-chord. In Subsection \ref{subsec: c-chord no mtw}, we show properties that does not require the MTW condition. The properties of the $c$-chord that use the MTW condition are in Subsection \ref{subsec: c-chord with mtw}. In Section \ref{sec: alternative c-conv}, we define the alternative $c$-convex function, and explore some properties of the alternative $c$-convex functions with or without the MTW condition. Finally, We prove the main theorem in Section \ref{sec: proof}. The first part of the main theorem is proved in Subsection \ref{subsec: fisrt part}, and the proof of the second part of the main theorem is presented in Subsection \ref{subsec: second part}.

\subsection*{Notations}
For real numbers $a<b$, we use $(a,b)$, $[a,b]$, $[a,b)$, $(a,b]$ to denote the intervals with and without the end points. For $x_0, x_1 \in \Rn$, we use $(x_0, x_1)$, $[x_0, x_1]$, $[x_0, x_1)$, $(x_0, x_1]$ to denote the segment between $x_0$ and $x_1$ with or without the boundary. For instance,
\begin{equation*}
[x_0, x_1) = \{ t x_1 + (1-t)x_0 | t \in [0,1) \}.
\end{equation*}
For any $x \in \Rn$, we use $\| x \|$ to denote the Euclidean norm of $x$. If $M$ is a matrix, we use $\| M \|_F$ to denote the Frobenius norm of $M$. We use $M^T$ and $M^{-T}$ to denote the transpose and the inverse of the transpose of the matrix $M$.\\
For $f: \Rn \to \R$, and a differentiable point $x \in \Rn$ of $f$, we use $D_x f$ to denote the gradient in a column vector:
\begin{equation*}
D_x f(x) = \left( \begin{matrix} \frac{\partial}{\partial x_1} f(x_1, \cdots , x_n ) \\ \vdots \\ \frac{\partial}{\partial x_n} f(x_1, \cdots , x_n ) \end{matrix} \right).
\end{equation*}
For a function $c : \Rn \times \Rn$ with variables $(x,y)$, we use $D^2_{xy} c$ to denote the mixed Hessian matrix of $c$:
\begin{equation*}
D^2_{xy} c = \left( \begin{matrix} \frac{\partial^2}{\partial x_1 \partial y_1} c &\cdots & \frac{\partial^2}{\partial x_1 \partial y_n} c \\ \vdots & \ddots \vdots \\ \frac{\partial^2}{\partial x_n \partial y_1} c & \cdots & \frac{\partial^2}{\partial x_n \partial y_n} c \end{matrix} \right).
\end{equation*}
For a set $S \subset \Rn$ and a point $p \in \Rn$, we denote the Euclidean distance between $S$ and $p$ by $\dist{S}{p}$. Also, we use the following notation for the translation of the set $S$ by $p$:
\begin{equation*}
S+p = \{ x+p | x \in S\}.
\end{equation*}
Moreover, for a matrix $M \in \R^{n \times n}$, we use the following notation for the transform of the set $S$ under $M$:
\begin{equation*}
MS = \{ M x | x \in S \}.
\end{equation*}
We use $B_r^n(p)$ to denote the $n$-dimensional ball of radius $r$ centered at $p$. We will omit the superscript that denotes the dimension when there is no confusion. In addition, for a set $S \subset \R^n$, we also use 
\begin{equation*}
    \begin{aligned}
        B_r(S) & = \left\{ p \in \R^n | p \in B_r(x) \textrm{ for some } x \in S \right\} \\
        & = \bigcup_{x \in S} B_r(x).
    \end{aligned}
\end{equation*}

\section{Conditions on the cost function}\label{sec: cost function}
In this section, we establish assumptions for the base spaces and the cost function that we will be using through out the paper, and introduce basic properties.

\subsection{Conditions on the cost function}\label{subsec: conditions}
Let $\X$ and $\Y$ be compact subsets of $\Rn$ with non-empty interior. Then the tangent spaces $T_x\X$ and $T_y \Y$ for any $x \in \X$ and $y \in \Y$ can be identified with $\Rn$ canonically. Henceforth, we use $\Rn$ instead of the tangent spaces $T_x \X$ and $T_y \Y$. We fix a function $c: \X \times \Y \to \R$ and call it cost function. In this section, we impose several conditions on the spaces $\X$ and $\Y$ and the cost function $c$, and prove some useful lemmas which will be used in the later part of the paper. \\
We first impose some regularity assumptions. we assume that the cost function is $C^2(\X \times \Y)$ in the sense that both mixed Hessian $D^2_{xy}c$ and $D^2_{yx}c$ are continuous and $D^2_{xy}c(x,y) = D^2_{yx}c(x,y)$ for any $(x,y) \in \X \times \Y$. Then the compactness of $\X \times \Y$ implies that $Dc = \begin{pmatrix} D_x c \\ D_y c \end{pmatrix}$ is bounded. In particular, $c$ is Lipschitz. 
\begin{equation}\label{eqn: c Lipschitz}
\clip = \sup_{(x_1, y_1) \neq (x_0, y_0)} \frac{| c(x_1, y_1) - c(x_0, y_0) | }{\| (x_1, y_1) - (x_0, y_0) \|} < \infty .
\end{equation}
where $ \| (x_1,y_1) - (x_0,y_0) \| = ( \| x_1 - x_0 \|^2 + \| y_1 - y_0 \|^2)^{1/2}$.\\

Next, we define so-called twist and non-degeneracy conditions on the cost function.

\begin{Def}\label{def: twist}
The cost function $c$ is said to satisfy \emph{twist condition} if for any $x \in \X$, the function
\begin{equation*}
	-D_x c(x, \cdot) : \Y \to \Rn
\end{equation*}
is injective. The cost function $c$ is said to satisfy \emph{twist* condition} if for any $y \in \Y$, the function
\begin{equation*}
	-D_y c(\cdot, y) : \X \to \Rn
\end{equation*}
is injective. We call $c$ \emph{bi-twisted} if $c$ satisfies both twist and twist* conditions.
\end{Def}

\begin{Def}\label{def: non-deg}
The cost function $c$ is said to be \emph{non-degenerate} if, for any $(x,y) \in \X \times \Y$, the mixed Hessian matrix $-D^2_{xy} c(x,y)$ is invertible.
\end{Def}
We assume that the cost function $c$ is bi-twisted and non-degenerate through out this paper. 

\begin{Cond}
The cost function $c$ is bi-twisted and non-degenerate.
\end{Cond}

Compactness of $\X \times \Y$ implies that the mixed Hessian $D^2_{xy} c(x,y)$ of the cost function has a bounded Frobenius norm:
\begin{equation}\label{eqn: alpha'}
	\| D^2_{xy}c(x,y) \|_F < \alpha' < \infty, \quad \forall (x,y) \in \X \times \Y.
\end{equation}
where $\|M \|_F = (\mathrm{Tr}(MM^T))^{1/2}$. The non-degeneracy condition with compactness of $\X$ and $\Y$ implies that there exists $\beta'>0$ such that
\begin{equation*}
	\|D^2_{xy} c(x,y)\|_F \geq \beta'
\end{equation*}
for any $(x,y) \in \X \times \Y$. In addition, the inverse matrix of the mixed Hessian $(D^2_{xy} c(x,y))^{-1}$ is also continuous and invertible. Hence there exists $\alpha''>0$ and $\beta''>0$ such that
\begin{equation*}
	\frac{1}{\alpha''} \leq \| (D^2_{xy} c(x,y))^{-1} \|_F \leq \beta''.
\end{equation*}
Let $\alpha = \max\{\alpha', 1/\alpha''\}$ and $\beta = \max\{1/\beta',\beta'' \}$, then we have
\begin{equation}\label{eqn: alpha beta}
	\frac{1}{\beta} \leq \|D^2_{xy} c(x,y)\|_F \leq{\alpha} \textrm{ and } \frac{1}{\alpha} \leq \| (D^2_{xy} c(x,y))^{-1} \|_F \leq \beta.
\end{equation}
Compactness of $\X \times \Y$ also implies that the mixed Hessian $D^2_{xy} c(x,y)$ is uniformly continuous. Therefore, there exists $\omega: [0, \diam{\X \times \Y}) \to [0,\infty)$, the modulus of continuity of $D^2_{xy} c$. i.e. there exists $\omega$  that is non-increasing, $\lim_{\rho \to 0} \omega(\rho) = 0$, and
\begin{equation}\label{eqn: mod. of conti. Hessian}
\| D^2_{xy}c(x_1,y_1)-D^2_{xy}c(x_0,y_0) \|_F \leq \omega(\rho ), \textrm{ for any } \| (x_1, y_1) - (x_0, y_0) \| \leq \rho,
\end{equation}
Using this with \eqref{eqn: alpha beta}, we can obtain a modulus of continuity for $(D^2_{xy} c(x,y))^{-1}$.

\begin{Lem}\label{lem: mod. of conti. inverse Hessian}
Let $x_0, x_1 \in \X$ and $y_0, y_1 \in \Y$. If $\| (x_0, y_0) - (x_1, y_1) \| \leq \rho$, then
\begin{equation}\label{eqn: mod. of conti. inverse Hessian}
	\| (D^2_{xy} c(x_0,y_0))^{-1} - (D^2_{xy} c(x_1,y_1))^{-1} \|_F \leq \beta^2 \omega(\rho).
\end{equation}
\end{Lem}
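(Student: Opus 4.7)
The plan is to use the standard matrix identity
\begin{equation*}
A^{-1} - B^{-1} = A^{-1}(B-A)B^{-1},
\end{equation*}
applied with $A = D^2_{xy} c(x_0, y_0)$ and $B = D^2_{xy} c(x_1, y_1)$. This reduces the problem to estimating the product of three Frobenius norms: the norms of the two inverse mixed Hessians, which are controlled by \eqref{eqn: alpha beta}, and the norm of the difference $D^2_{xy}c(x_1,y_1) - D^2_{xy}c(x_0,y_0)$, which is controlled by the modulus of continuity \eqref{eqn: mod. of conti. Hessian}.

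Concretely, I would first recall that the Frobenius norm is submultiplicative, i.e.\ $\|MN\|_F \leq \|M\|_F \|N\|_F$ for any compatible matrices $M,N$ (this follows from Cauchy--Schwarz applied to the rows of $M$ and columns of $N$). Applying this twice to the identity above and using $\|(D^2_{xy}c(x_i,y_i))^{-1}\|_F \leq \beta$ for $i=0,1$ together with the hypothesis $\|(x_0,y_0)-(x_1,y_1)\|\leq\rho$ yields
\begin{equation*}
\| (D^2_{xy}c(x_0,y_0))^{-1} - (D^2_{xy}c(x_1,y_1))^{-1} \|_F \leq \beta \cdot \omega(\rho) \cdot \beta = \beta^2 \omega(\rho),
\end{equation*}
which is precisely \eqref{eqn: mod. of conti. inverse Hessian}.

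There is no real obstacle here; the proof is a one-line algebraic manipulation together with two applications of submultiplicativity. The only small point to be careful about is making sure the three-factor estimate uses submultiplicativity correctly (apply it once to group two factors, then again). Everything else has already been set up in \eqref{eqn: alpha beta} and \eqref{eqn: mod. of conti. Hessian}.
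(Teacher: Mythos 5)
Your proof is correct and follows essentially the same route as the paper: write the difference of inverses via the resolvent identity, then apply submultiplicativity of the Frobenius norm together with \eqref{eqn: alpha beta} and \eqref{eqn: mod. of conti. Hessian}. In fact, the identity as you wrote it, $A^{-1}-B^{-1}=A^{-1}(B-A)B^{-1}$, is the correct form; the paper writes the factors as $A^{-1}B^{-1}(B-A)$, which is not an identity for non-commuting matrices, though this slip does not affect the final Frobenius-norm bound.
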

\begin{proof}
\begin{align*}
	&\|(D^2_{xy} c(x_0,y_0))^{-1} - (D^2_{xy} c(x_1,y_1))^{-1}\|_F  \\
	= & \|(D^2_{xy} c(x_0,y_0))^{-1}  (D^2_{xy} c(x_1,y_1))^{-1} ( D^2_{xy} c(x_1,y_1) - D^2_{xy} c(x_0,y_0)) \|_F \\
	\leq & \|(D^2_{xy} c(x_0,y_0))^{-1}\|_F \| (D^2_{xy} c(x_1,y_1))^{-1} \|_F \| (D^2_{xy} c(x_1,y_1)) - (D^2_{xy} c(x_0,y_0)) \|_F \\
	\leq & \beta^2 \omega(\rho)
\end{align*}
where we have used \eqref{eqn: mod. of conti. Hessian} and \eqref{eqn: alpha beta} for the last inequality.
\end{proof}

For any subset $A \subset \X$, we denote its image under the map $-D_y c(\cdot,y ) : \Y \to \Rn$ by $[A]_{y}$, i.e. $[A]_y = -D_y c(A,y)$. Similarly, for any subset $B \subset \Y$, we define $[B]_x = -D_x c(x,B)$. 

\begin{Def}
Let $A \subset \X$ and $y \in \Y$. $A$ is called \emph{$c$-convex with respect to $y$} if $[A]_{y}$ is convex. If $A$ is $c$-convex with respect to $y$ for any $y \in \Y$, then we say that $A$ is \emph{$c$-convex with respect to $\Y$}. Similarly, for $B \subset \Y$ and $x \in \X$, $B$ is said to be \emph{$c$-convex with respect to $x$} if $[B]_x$ is convex. If $B$ is $c$-convex with respect to $x$ for any $x \in \X$, then we say that $B$ is \emph{$c$-convex with respect to $\X$}.
\end{Def}

We assume that $\X$ and $\Y$ are $c$-convex with respect to $\Y$ and $\X$ respectively for the rest of this paper.

\begin{Cond}
$\X$ is $c$-convex with respect to $\Y$ and $\Y$ is $c$-convex with respect to $\X$.
\end{Cond}

Convexity of $[\X]_y$ and $[\Y]_x$ implies that there exists a (outward) normal cone at any points on the boundary of $[\X]_y$ and $[\Y]_x$. We define the notations for the cones in the following definition.

\begin{Def}\label{def: cones}
Let $\theta \in [0, \frac{\pi}{2}]$ and let $w \in \Rn$. We denote the cone with opening $\theta$ and axis $w$ by $\K_{\theta, w}$:
\begin{equation*}
	\K_{\theta, w} = \{ v \in \Rn | \langle v, w \rangle \geq \cos(\theta) \| v \| \| w \| \}
\end{equation*}
and we also denote $\K_{\theta, w}^\rho = \K_{\theta,w} \cap \overline{B_\rho (0)}$. For $S \subset \Rn$, we define \emph{the dual cone of $S$} by 
\begin{equation*}
	\K^*(S) = \{ v \in \Rn | \langle v, p \rangle \geq 0, \forall p \in S\}.
\end{equation*}
If $S\subset \Rn$ is convex, then for any $p \in \partial S$, we denote \emph{the outward normal cone of $S$ at $p$} by $\N(S;p)$:
\begin{equation*}
	\N(S;p) = -\K^*(S-p).
\end{equation*}
\end{Def}

The bi-twist condition on the cost function implies that we can define inverse functions of $-D_y c(\cdot,y ) : \X \to [\X]_y$ and $-D_x c(x, \cdot) : \Y \to [\Y]_x$, which we call $c$-exponential maps. 

\begin{Def}\label{def: cexp}
The inverse map of $-D_x c(x, \cdot ) : \Y \to [\Y]_{x}$ is called the \emph{$c$-exponential map focused at $x$}, and denoted by
\begin{equation*}
	\cexp{x}{\cdot}: [\Y]_{x} \to \Y.
\end{equation*}
We also call the inverse map of $-D_y c(\cdot, y): \X \to [\X]_{y}$ the \emph{$c$-exponential map focused at $y$}, and denote
\begin{equation*}
	\cexp{y}{\cdot}: [\X]_{y} \to \X.
\end{equation*}
\end{Def}

On the other hand, the non-degeneracy condition implies that the $c$-exponential maps $\cexp{x}{\cdot}$ and $\cexp{y}{\cdot}$ are differentiable, and
\begin{equation*}
	D_q \cexp{x}{q} = (-D^2_{xy}c(x,\cexp{x}{q}))^{-1}, \quad D_p \cexp{y}{p} = (-D^2_{yx}c(\cexp{y}{p}, y))^{-1}.
\end{equation*}
Then compactness of $\X$ and $\Y$ and continuity of $-D_x c(x, \cdot)$, $-D_y c(\cdot, y)$, $\cexp{x}{\cdot}$ and $\cexp{y}{\cdot}$ imply that these functions are closed maps. Also, the non-degeneracy implies that these functions are open maps. Moreover, \eqref{eqn: alpha beta} with compactness of $\X$ and $\Y$ implies that the maps $-D_x c(x, \cdot)$, $-D_y c(\cdot, y)$, $\cexp{x}{\cdot}$ and $\cexp{y}{\cdot}$ are bi-Lipschitz. We use $L$ to denote the bi-Lipschitz constant for these functions:
\begin{equation}\label{eqn: Lipschitz}
	\begin{aligned}
	\frac{1}{L} \| x_1 - x_0 \| \leq \| -D_y c(x_1, y) +D_y c(x_0, y) \| \leq L \| x_1 - x_0 \|, \\
	\frac{1}{L} \| y_1 - y_0 \| \leq \| -D_x c(x, y_1) +D_y c(x, y_0) \| \leq L \| y_1 - y_0 \|, \\
	\frac{1}{L} \| p_1 - p_0 \| \leq \| \cexp{y}{p_1} - \cexp{y}{p_0} \| \leq L \| p_1 - p_0 \|, \\
	\frac{1}{L} \| q_1 - q_0 \| \leq \| \cexp{x}{q_1} - \cexp{x}{q_0} \| \leq L \| q_1 - q_0 \|.
\end{aligned}
\end{equation}

\begin{Def}\label{def: c-segment}
Let $x_0, x_1 \in \X$ and $y \in \Y$. Let $p_i = -D_y c(x_i,y)$. We define \emph{$c$-segment with respect to $y$ from $x_0$ to $x_1$} by
\begin{equation*}
	\cexp{y}{[p_0,p_1]} = \{ \cexp{y}{p_t} | p_t = tp_1 + (1-t)p_0, t \in [0,1]\}.
\end{equation*}
We denote the $c$-segment with respect to $y$ from $x_0$ to $x_1$ by $\cseg{y}[x_0,x_1]$. Similarly, for $y_0, y_1 \in \Y$, $x \in \X$, and $q_i = -D_x c(x,y_i)$, we define \emph{$c$-segment with respect to $x$ from $y_0$ to $y_1$} by
\begin{equation*}
	\cexp{x}{[q_0, q_1]} =\{ \cexp{x}{q_t} | q_t = t q_1 + (1-t)q_0, t \in [0,1]\}.
\end{equation*}
We denote the $c$-segment with respect to $x$ from $y_0$ to $y_1$ by $\cseg{x}[y_0,y_1]$.
\end{Def}

\begin{Rmk}\label{rmk: c-segment notation}
We will also use notations like $\cseg{y}(x_0,x_1)$ and $\cseg{y}(x_0,x_1]$ to denote the $c$-segments with or without boundary points. Moreover, we abuse the notation $x_t \in \cseg{y}[x_0,x_1]$ to denote that $x_t$ is not only a point on the $c$-segment with respect to $y$ from $x_0$ to $x_1$, but also
\begin{equation*}
	x_t = \cexp{y}{tp_1 + (1-t)p_0},
\end{equation*}
where $p_i =-D_y c(x_i, y)$ for $i = 0,1$. 
\end{Rmk}

An important condition for the regularity theory of optimal transportation problem is about a tensor so-called MTW tensor. The MTW tensor is defined by the following formula:
\begin{equation*}
	\MTW = D^2_{qq} \mathcal{A}(x,q) = D^2_{qq} ( -D^2_{xx} c(x, \cexp{x}{q})).
\end{equation*}
In coordinate, the MTW tensor has the form
\begin{equation}\label{eqn: MTW tensor}
	\MTW_{ijkl} = ( c_{ij,p}c^{p,q}c_{q,st} -c_{ij,st})c^{s,k}c^{t,l}
\end{equation}
where the indices before comma indicates the derivatives in $x$ variable, and after comma indicates the derivatives $y$ variables, and $(c^{p,q})$ is the inverse matrix of $(c_{p,q}) = D^2_{xy} c$. The condition on the MTW tensor, which we will call the \emph{MTW condition} is a non-negativity on certain directions.

\begin{Def}\label{def: MTW condition}
The cost function $c$ is said to satisfy \emph{(strong) MTW condition} if it is $C^4$ and, for any $y \in \Y$, $p \in [\X]_y$, and any $\eta, \xi \in \Rn\setminus\{0\}$ such that $\eta \perp \xi$, the MTW tensor at $(y,p)$ satisfies
\begin{equation}\label{eqn: MTW condition}
	\MTW[\eta, \eta, \xi, \xi] \geq (>) 0.
\end{equation}
\end{Def}

To define the MTW tensor, we need $C^4$ regularity on the cost function $c$. However, there are synthetic expressions of the MTW condition that requires less regularity to formulate, and equivalent to the MTW condition when the cost function is $C^4$. 

\begin{Def}\label{def: Loeper}
The cost function $c$ is said to satisfy \emph{Loeper's property} if for any $x_0, x_1 \in \X$, $y_0, y \in \Y$ and $x_t \in \cseg{y_0}[x_0,x_1]$, we have
\begin{equation}\label{eqn: Loeper}
	-c(x_t, y) +c(x_t, y_0) \leq \max\{ -c(x_i, y) +c(x_i,y_0) | i = 0,1 \}. 
\end{equation} 
\end{Def}

\begin{Def}\label{def: QQconv}
The cost function $c$ is said to be quantitatively quasi-convex (QQconv) if there exists a constant $C\geq 1$ such that for any $x_0, x_1 \in \X$, $y_0, y \in \Y$ and $x_t \in \cseg{y_0}[x_0,x_1]$, we have
\begin{equation}\label{eqn: QQconv}
\begin{aligned}
	&-c(x_t, y) +c(x_t, y_0) + c(x_0, y)-c(x_0,y_0) \\
	& \leq Ct \max\{-c(x_1, y) +c(x_1, y_0) + c(x_0, y)-c(x_0,y_0), 0 \}.  
\end{aligned}
\end{equation}
\end{Def}

\begin{Rmk}
Due to the symmetry of the indices in \eqref{eqn: MTW tensor}, we can change the roles of $x$ and $y$ in \eqref{eqn: Loeper} and \eqref{eqn: QQconv}:
\begin{equation*}
	-c(x, y_t) + c(x_0, y_t) \leq \max \{ -c(x, y_i) + c(x_0, y_i) | i = 0,1 \},
\end{equation*}
and
\begin{align*}
	&-c(x,y_t) + c(x_0, y_t) + c(x, y_0) - c(x_0, y_0) \\
	&\leq Ct \max\{ -c(x_1, y) +c(x_0, y) + c(x_1, y_0) -c (x_0,y_0) , 0 \}.
\end{align*}
\end{Rmk}

There are other discussions about the meaning of the MTW condition (for example, see \cite{KM10},\cite{LT07} \cite{KZ20}, \cite{Ran23}). In this paper, we establish another synthetic expression of the MTW condition as stated in the main theorem Theorem \ref{thm: main theorem}. Note that we do not have to use the original formulation of the MTW condition \eqref{eqn: MTW condition} which requires $C^4$ regularity, but we can use Loeper's property \eqref{eqn: Loeper} or QQconv \eqref{eqn: QQconv} where $C^2$ regularity is enough for the formulation. Therefore, we will show that the new synthetic expression is equivalent to Loeper's property \eqref{eqn: Loeper}. 

\begin{Rmk}
Loeper's property is not assumed in the whole paper, but is assumed in several lemmas. Hence, we will state that the cost function $c$ satisfies Loeper's property in the statement of lemma or theorem whenever we use Loeper's property.
\end{Rmk}

The constants $\clip, \alpha, \beta, L$, and the modulus of continuity $\omega$ are decided by the fixed cost function $c$, independent of whether the MTW condition holds. Also, the sets $\X$ and $\Y$ are also fixed through out the paper, and therefore, the constant decided by the sets $\X$ and $\Y$, such as $\diam{\X}$ and $\diam{\Y}$, are also fixed. We shall use the term '\emph{universal}' to denote the constants which are only decided by these quantities and $\omega$. 

\subsection{$c$-affine functions}\label{subsec: c-affine functions}
In this subsection, we define the $c$-affine functions and the $c$-convex functions, and provide basic properties of them.

\begin{Def}\label{def: c-affine c-conv}
Let $y \in \Y$ and $h \in \R$. We call functions $f:\X \to \R$ of the following form \emph{$c$-affine functions}:
    \begin{equation*}
        f(x) = -c(x,y)+h.
    \end{equation*} 
\end{Def}

$c$-affine functions can be regarded as basic elements in the construction of the $c$-convex functions. As such, we will need some basic analysis of the relation between one $c$-affine function and another. 

\begin{Def}\label{def: section}
Let $\phi: \X \to \R$ be a function and $f:\X \to \R$ be a $c$-affine function:
    \begin{equation*}
        f(x) = -c(x,y) + h.
    \end{equation*}
\emph{The section of $\phi$ with respect to $f$} is the set where $\phi$ is smaller than or equal to $f$. We denote the section by $\csec{\phi}{f}$:
    \begin{equation}
	   \csec{\phi}{f} = \{ x \in \X | \phi(x) \leq f (x) \}.
    \end{equation}
\end{Def}

We present some basic properties of the sections of $c$-affine function with respect to another $c$-affine function in this subsection.

\begin{Lem}\label{lem: section non empty interior}
Let $x' \in \X$ and $y_0, y_1 \in \Y$, $y_0 \neq y_1$. Also, let
    \begin{equation*}
        f_i(x) = -c(x,y_i) + c(x', y_i).
    \end{equation*}
If $x' \in \Int{\X}$, then $\csec{f_{0}}{f_{1}}$ has non-empty interior.
\end{Lem}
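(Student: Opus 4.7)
The plan is to reduce the condition defining $\csec{f_{y_0}}{f_{y_1}}$ to a one-variable inequality for a differentiable function, and then produce an interior point of the section by moving in the direction where that function strictly decreases, exploiting the twist condition to guarantee the relevant gradient is nonzero.

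First I would observe that $f_{y_0}(x') = f_{y_1}(x') = 0$, so $x'$ lies in the section. Setting $g(x) = c(x,y_1) - c(x,y_0)$, the definition unfolds to
\begin{equation*}
\csec{f_{y_0}}{f_{y_1}} = \{ x \in \X \mid g(x) \leq g(x') \}.
\end{equation*}
Since $c \in C^2(\X \times \Y)$, the function $g$ is $C^1$ with
\begin{equation*}
\nabla g(x') = D_x c(x', y_1) - D_x c(x', y_0).
\end{equation*}
The twist condition asserts that the map $-D_x c(x', \cdot) : \Y \to \Rn$ is injective, and since $y_0 \neq y_1$ we conclude $\nabla g(x') \neq 0$.

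Next I would use this nonvanishing gradient together with $x' \in \Int{\X}$ to produce a strict interior point of the section. Setting $v = -\nabla g(x')$, for all sufficiently small $t > 0$ the point $x_t := x' + tv$ remains in $\Int{\X}$, and a first-order Taylor expansion yields
\begin{equation*}
g(x_t) = g(x') - t \|\nabla g(x')\|^2 + o(t) < g(x')
\end{equation*}
for $t$ small. Continuity of $g$ then gives a neighborhood of $x_t$ on which the strict inequality $g < g(x')$ persists, and this neighborhood sits inside $\csec{f_{y_0}}{f_{y_1}}$.

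The argument is short and contains no real obstacle; the only point that must be handled carefully is the invocation of the twist condition to rule out $\nabla g(x') = 0$, which is precisely where the hypothesis $y_0 \neq y_1$ enters. The interior assumption on $x'$ is needed only to ensure that a small step in the chosen descent direction stays in $\X$.
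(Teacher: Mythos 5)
Your proposal is correct and follows essentially the same route as the paper: you pick the descent direction $v = -\nabla g(x') = -D_x c(x',y_1) + D_x c(x',y_0)$, invoke the twist condition (injectivity of $-D_x c(x',\cdot)$) to guarantee $v \neq 0$, use $x' \in \Int{\X}$ to step into the interior along $v$, and apply a first-order expansion plus continuity to exhibit a ball inside $\csec{f_{y_0}}{f_{y_1}}$. This is the same argument the paper gives, just phrased through $g(x) = c(x,y_1) - c(x,y_0)$ rather than through the difference $f_{y_1} - f_{y_0}$.
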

\begin{proof}
    Suppose $x' \in \Int{\X}$. Let 
    \begin{equation*}
        v = D_x f_{1}(x') - D_x f_{0}(x') = -D_x c (x',y_1) + D_x c(x', y_0).
    \end{equation*} 
    The bi-twist condition implies $v \neq 0$. Moreover, $x' \in \Int{\X}$ implies that there exists $\delta >0$ such that $x' + t v \in \X$ for any $t \in (-\delta, \delta)$. Hence, at $t=0$, we compute
    \begin{equation*}
	   \frac{d}{dt}\bigg|_{t=0} (f_{1}(x' + t v)-f_{0}(x' + t v))  = \| v \|^2 >0.
    \end{equation*}
    This implies that for some small enough $\delta'>0$, we have $f_{1}(x' + t v)-f_{0}(x' + t v) >0$ for any $t \in (0,\delta')$. Fix $t_0 \in (0, \delta')$. Using continuity of $c$, we obtain that there exists $r>0$ such that $f_{1} (x) > f_{0}(x)$ for $x \in B_r(x' + t_0v)$. Hence $B_r(x' + tv) \subset \csec{f_{0}}{f_{1}}$ and $\Int{\csec{f_{0}}{f_{1}}} \neq \emptyset$.
\end{proof}

\begin{Lem}\label{lem: c-affine section bdy}
Let $x' \in \X$, $y_0, y_1 \in \Y$, $f_{0}, f_{1}$ be from Lemma \ref{lem: section non empty interior}. If $f_{0}(x_0) = f_{1}(x_0)$ for some $x_0 \in \X$, then $x_0 \in \partial \csec{f_{0}}{f_{1}}$.
\end{Lem}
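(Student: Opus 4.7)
The plan is to use the twist condition to show that the function $g(x) := f_{y_1}(x) - f_{y_0}(x) = -c(x,y_1) + c(x,y_0) + c(x',y_1) - c(x',y_0)$ has a nonvanishing gradient at $x_0$, so its zero set at $x_0$ cannot lie in the interior of the sublevel set $\{g \geq 0\} \cap \X = \csec{f_{y_0}}{f_{y_1}}$.

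First I would observe that the hypothesis $f_{y_0}(x_0) = f_{y_1}(x_0)$ is equivalent to $g(x_0) = 0$, so $x_0 \in \csec{f_{y_0}}{f_{y_1}}$ automatically. The task is then to exhibit a sequence of points approaching $x_0$ that lie outside the section. I would split into two cases. If $x_0 \in \partial \X$, the claim is essentially trivial: any neighborhood of $x_0$ in $\Rn$ contains points outside $\X$ (hence outside the section), while $x_0$ itself lies in the section, so $x_0 \in \partial \csec{f_{y_0}}{f_{y_1}}$.

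The interesting case is $x_0 \in \Int{\X}$. Here I compute directly
\begin{equation*}
D_x g(x_0) = -D_x c(x_0, y_1) + D_x c(x_0, y_0).
\end{equation*}
Because $y_0 \neq y_1$ (inherited from the hypothesis of Lemma \ref{lem: section non empty interior}) and the cost $c$ satisfies the twist condition, the map $-D_x c(x_0, \cdot)$ is injective on $\Y$, so $-D_x c(x_0, y_0) \neq -D_x c(x_0, y_1)$, i.e.\ $D_x g(x_0) \neq 0$. Choose any direction $w \in \Rn$ with $\langle D_x g(x_0), w \rangle < 0$. Since $x_0 \in \Int{\X}$, the segment $\{x_0 + t w : t \in (0, \delta)\}$ lies in $\X$ for some $\delta > 0$, and a first-order Taylor expansion gives
\begin{equation*}
g(x_0 + t w) = t \langle D_x g(x_0), w\rangle + o(t) < 0
\end{equation*}
for all sufficiently small $t > 0$. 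These points lie in $\X$ but violate $f_{y_0} \leq f_{y_1}$, so they are outside $\csec{f_{y_0}}{f_{y_1}}$ yet arbitrarily close to $x_0$. Combined with $x_0 \in \csec{f_{y_0}}{f_{y_1}}$, this yields $x_0 \in \partial \csec{f_{y_0}}{f_{y_1}}$.

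There is no real obstacle: the only substantive input is the twist condition, which converts the distinctness $y_0 \neq y_1$ into a nondegeneracy of $D_x g(x_0)$, after which the boundary conclusion reduces to a standard implicit-function-theorem-style observation that a $C^1$ function crosses zero transversally. The only care needed is the boundary case $x_0 \in \partial \X$, which I would handle separately at the outset.
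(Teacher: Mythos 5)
Your proof is correct and takes essentially the same approach as the paper: both use the twist condition to show that the gradient of $g := f_{y_1}-f_{y_0}$ at $x_0$ is nonzero, then move along a direction where $g$ decreases to find points arbitrarily close to $x_0$ lying outside the section. The paper phrases this as a contradiction (assuming $x_0\in\Int{\csec{f_{y_0}}{f_{y_1}}}\subset\Int{\X}$, which implicitly dispatches the case $x_0\in\partial\X$ that you handle explicitly), but the substance is identical.
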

\begin{proof}
Suppose $x_0 \in \Int{\csec{f_{0}}{f_{1}}} \subset \Int{\X}$. Let 
\begin{equation*}
v = D_x f_{0}(x_0) - D_x f_{1}(x_0) = -D_x c (x_0,y_0) + D_x c(x_0, y_1).
\end{equation*}
Then the proof of Lemma \ref{lem: section non empty interior} using $x_0$ instead of $x'$ and switching the role of $y_0$ and $y_1$ shows that there exists $\delta >0$ such that 
\begin{equation*}
	f_{0}(x_0+tv) > f_{1} (x_0+tv) \textrm{ for any } t \in (0,\delta).
\end{equation*}
On the other hand, $x_0 \in \Int{\csec{f_{0}}{f_{1}}}$ implies that there exists $r>0$ such that $B_r(x_0) \subset \csec{f_{0}}{f_{1}}$. Then, choosing $t_0 \in (0, \min\{\delta, r\})$, we obtain
\begin{equation*}
	x_0 + t_0 v \in \csec{f_{0}}{f_{1}} \quad \textrm{and} \quad f_{0}(x_0+t_0 v) > f_{1} (x_0+t_0v),
\end{equation*}
which contradicts to the definition of $\csec{f_{0}}{f_{1}}$. Therefore, we must have $x_0 \in \partial \csec{f_{0}}{f_{1}}$.
\end{proof}

\begin{Cor}\label{lem: boundary and interior of c-affine section}
Let $x' \in \X$, $y_0, y_1 \in \Y$, $f_{0}, f_{1}$, be from Lemma \ref{lem: section non empty interior}. Then we have
\begin{align*}
	\Int{\csec{f_{0}}{f_{1}}} & \subset \{ x \in \X | f_{0}(x) < f_{1}(x)\},\\
	\partial \csec{f_{0}}{f_{1}} & \supset \{ x \in \X | f_{0}(x) = f_{1}(x) \}.
\end{align*}
\end{Cor}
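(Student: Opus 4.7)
The plan is to derive both inclusions directly from Lemma \ref{lem: c-affine section bdy}, using only the definition of the section and the fact that in $\Rn$ the interior and boundary of a set are disjoint. No new geometric input is required.

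For the second inclusion, I would take any $x \in \X$ with $f_{y_0}(x) = f_{y_1}(x)$. In particular $f_{y_0}(x) \leq f_{y_1}(x)$, so $x \in \csec{f_{y_0}}{f_{y_1}}$, and Lemma \ref{lem: c-affine section bdy} immediately places $x$ in $\partial \csec{f_{y_0}}{f_{y_1}}$. This is nothing more than a rewording of Lemma \ref{lem: c-affine section bdy} with its hypothesis viewed as defining a subset.

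For the first inclusion, I would argue by contradiction. Suppose $x \in \Int{\csec{f_{y_0}}{f_{y_1}}}$. Since $x$ lies in the section, $f_{y_0}(x) \leq f_{y_1}(x)$. If equality held, then by Lemma \ref{lem: c-affine section bdy} we would have $x \in \partial \csec{f_{y_0}}{f_{y_1}}$; but $\Int{A}$ and $\partial A$ are disjoint for any $A \subset \Rn$, contradicting $x \in \Int{\csec{f_{y_0}}{f_{y_1}}}$. Hence the inequality is strict, $f_{y_0}(x) < f_{y_1}(x)$, giving the inclusion.

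There is no main obstacle here: the nontrivial content, namely that equality of two distinct $c$-affine functions at an interior point would force a direction of strict increase via the bi-twist condition, has already been carried out in Lemma \ref{lem: section non empty interior} and Lemma \ref{lem: c-affine section bdy}. The corollary is just a bookkeeping step that repackages those lemmas as statements about $\Int{\csec{f_{y_0}}{f_{y_1}}}$ and $\partial \csec{f_{y_0}}{f_{y_1}}$.
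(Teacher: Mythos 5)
Your proof is correct and follows essentially the same route as the paper: the second inclusion is a direct restatement of Lemma \ref{lem: c-affine section bdy}, and the first inclusion follows from the disjointness of interior and boundary combined with that lemma. The paper phrases the first inclusion as a set-theoretic identity $\Int{A} = A \setminus \partial A$ while you phrase it as a contradiction, but the logical content is identical.
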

\begin{proof}
    Lemma \ref{lem: c-affine section bdy} implies 
        \begin{equation*}
	       \{ x \in \X | f_{0}(x) = f_{1}(x) \} \subset \partial \csec{f_{0}}{f_{1}},
        \end{equation*}
    which is the second part of the corollary. Then, the first part is given by
        \begin{align*}
	       \Int{\csec{f_{0}}{f_{1}}} & = \csec{f_{0}}{f_{1}} \setminus \partial \csec{f_{0}}{f_{1}} \\
	       & \subset \csec{f_{0}}{f_{1}} \setminus \{ x \in \X | f_{0}(x) = f_{1}(x) \} \\
	       & = \{ x \in \X | f_{0}(x) < f_{1}(x)\}.
        \end{align*}
\end{proof}

In the next lemma, we find a cone that is in $ \left[ \csec{f_{0}}{f_{1}} \right]_{y_0}$ with the vertex in $\{ x \in \X | f_1(x) = f_0(x)\}$.

\begin{Lem}\label{lem: cone in c-affine section}
We use the notations from Lemma \ref{lem: section non empty interior}. Let 
\begin{equation*}
	x_0 \in \{ x \in \X | f_{0}(x) = f_{1}(x) \} \subset \partial \csec{f_{0}}{f_{1}}.
\end{equation*}
Also, let $p_0 = -D_y c(x_0, y_0)$, $q_i = -D_x c(x_0, y_i)$ for $i = 0,1$ and $M_0 = (-D^2_{xy} c(x_0,y_0))$. Define $v \in \Rn$ by
\begin{equation}\label{eqn: cone in section: w}
	v = M_0^{-1} (q_1 - q_0) .
\end{equation}
Note that $v \neq 0$ by bi-twist condition. Suppose that there exist constants $l_0, l_1 >0$ such that
\begin{equation*}
l_0 \leq \| q_1 - q_0 \| \leq l_1.
\end{equation*}
Fix $\theta \in \left(0,\frac{\pi}{2}\right)$ and let $\rho>0$ satisfy
\begin{equation}\label{eqn: radius of cone in section}
	2\beta L^2 \rho + \beta^2 l_1 \omega(L\rho) \leq \frac{l_0}{\alpha}  \cos(\theta).
\end{equation}
Then we have
\begin{equation}\label{eqn: cone in c-affine section}
	(\K_{\theta,v}^\rho+p_0) \cap [\X ]_{y_0} \subset \left[ \csec{f_{0}}{f_{1}} \right]_{y_0}.
\end{equation}
\end{Lem}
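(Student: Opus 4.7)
The set $\csec{f_{y_0}}{f_{y_1}}$ is defined by $f_{y_0} \leq f_{y_1}$, so I would reduce the claim to showing that $g(x) := f_{y_1}(x) - f_{y_0}(x) \geq 0$ whenever $x = \cexp{y_0}{p_0 + v}$ with $v \in \K_{\theta,w}^\rho$ and $p_0 + v \in [\X]_{y_0}$. The hypothesis $f_{y_0}(x_0) = f_{y_1}(x_0)$ gives $g(x_0) = 0$, so the task becomes estimating the change of $g$ along the $c$-segment from $x_0$ to $x$.

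First I would parametrize $p_s = p_0 + sv$, $x_s = \cexp{y_0}{p_s}$ for $s \in [0,1]$, and apply the chain rule to $g \circ \cexp{y_0}{\cdot}$ using the identity $D\cexp{y_0}{p_s} = M_s^{-T}$ (where $M_s = -D^2_{xy} c(x_s, y_0)$) and $D_x g(x_s) = q_1(s) - q_0(s)$ for $q_i(s) = -D_x c(x_s, y_i)$. The fundamental theorem of calculus then yields
\begin{equation*}
g(x) = \int_0^1 v^T M_s^{-1}\bigl(q_1(s) - q_0(s)\bigr)\, ds,
\end{equation*}
and the key observation is that the integrand at $s = 0$ equals $v^T w$ by the very definition \eqref{eqn: cone in section: w} of $w$.

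To bound this leading term from below, I would use the cone condition $v \in \K_{\theta, w}$ to obtain $v^T w \geq \cos(\theta)\|v\|\|w\|$, and the identity $q_1 - q_0 = M_0 w$ combined with \eqref{eqn: alpha beta} to obtain $\|w\| \geq \|q_1 - q_0\|/\alpha \geq l_0/\alpha$, so $v^T w \geq \cos(\theta)\,l_0\|v\|/\alpha$. For the discrepancy between the integrand at $s$ and at $0$, I would split
\begin{equation*}
v^T\bigl[M_s^{-1}(q_1(s) - q_0(s)) - M_0^{-1}(q_1 - q_0)\bigr] = v^T M_s^{-1}\,\delta q(s) + v^T\bigl(M_s^{-1} - M_0^{-1}\bigr)(q_1 - q_0),
\end{equation*}
where $\delta q(s) = (q_1(s) - q_0(s)) - (q_1 - q_0)$. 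The bi-Lipschitz bound \eqref{eqn: Lipschitz} yields $\|x_s - x_0\| \leq L\rho$; Lemma \ref{lem: mod. of conti. inverse Hessian} then bounds the second summand by $\beta^2 l_1\omega(L\rho)\|v\|$, and Lipschitz regularity of $-D_x c(\cdot, y_i)$ (with constant $L$, after enlarging if necessary) together with $\|M_s^{-1}\| \leq \beta$ bounds the first summand by $2\beta L^2\rho\|v\|$.

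Combining, the integrand at each $s$ is at least $\bigl[\cos(\theta)l_0/\alpha - (2\beta L^2 \rho + \beta^2 l_1\omega(L\rho))\bigr]\|v\|$, which is nonnegative by the radius condition \eqref{eqn: radius of cone in section}. Integrating over $s \in [0,1]$ yields $g(x) \geq 0$, hence $x \in \csec{f_{y_0}}{f_{y_1}}$ and the claimed inclusion follows. The main technical obstacle is the bookkeeping of constants in the two-term error decomposition; once the right regularity tools (bi-Lipschitz of the momentum maps and the modulus of continuity from Lemma \ref{lem: mod. of conti. inverse Hessian}) are identified, the linearization of the integrand around $s = 0$ is routine.
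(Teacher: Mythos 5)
Your proposal is correct and follows the same strategy as the paper: express the defining quantity of the section as an integral of $\langle M_s^{-1}(q_1(s)-q_0(s)), v\rangle$ along the $c$-segment, isolate the leading term $v^Tw$ (bounded below by $\cos(\theta)\,l_0\|v\|/\alpha$), and control the two error terms via the bi-Lipschitz estimate \eqref{eqn: Lipschitz} and Lemma \ref{lem: mod. of conti. inverse Hessian}, exactly as in the paper's display \eqref{eqn: difference of c affine 2}. The bookkeeping of constants and the final comparison with the radius condition \eqref{eqn: radius of cone in section} also match.
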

\begin{proof}
    Note first that $f_{0}(x_0) = f_{1}(x_0)$ implies 
        \begin{align*}
	    & -c(x_0, y_0) +c(x' y_0) = -c(x_0, y_1) +c(x', y_1) \\
	       \Leftrightarrow & -c(x', y_1) +c(x', y_0) = -c(x_0, y_1) +c(x_0, y_0).
        \end{align*}
    Therefore
        \begin{align*}
	       & -c(x, y_1) + c(x', y_1) \geq -c(x, y_0) +c(x', y_0) \\
	       \Leftrightarrow & -c(x, y_1) +c(x, y_0) \geq -c(x', y_1) +c(x', y_0) \\
	       \Leftrightarrow & -c(x, y_1) +c(x, y_0) \geq -c(x_0, y_1) +c(x_0, y_0) \\
	       \Leftrightarrow & -c(x, y_1) + c(x_0, y_1) \geq -c(x, y_0) +c(x_0, y_0),
        \end{align*}
    so that
        \begin{equation*}
	       \csec{f_{0}}{f_{1}} = \{ x \in \X | -c(x, y_1) + c(x_0, y_1) \geq -c(x, y_0) +c(x_0, y_0)\}.
        \end{equation*}
    Let $p_1 \in (\K_{\theta,v}^\rho+p_0) \cap [\X ]_{y_0}$ and $x_1 = \cexp{y_0}{p_1}$. Also, let $x_t \in \cseg{y_0}[x_0, x_1]$. Then we compute
        \begin{equation}\label{eqn: difference of c affine}
            \begin{aligned}
	               & -c(x_1, y_1) + c(x_0, y_1) + c(x_1, y_0) - c(x_0, y_0) \\
	               =& \int_0^1 \left\langle (-D^2_{xy} c(x_t, y_0) )^{-1} (-D_x c(x_t, y_1)), p_1 - p_0 \right\rangle dt \\
	               & - \int_0^1 \left\langle (-D^2_{xy} c(x_t, y_0) )^{-1} (-D_x c(x_t, y_0)), p_1 - p_0 \right\rangle dt \\
	               = & \int_0^1 \left\langle (-D^2_{xy} c(x_t, y_0))^{-1} (-D_x c(x_t, y_1)+D_x c(x_t, y_0) ), p_1 - p_0 \right\rangle dt 
            \end{aligned}
        \end{equation}
    To continue from \eqref{eqn: difference of c affine}, we estimate the terms with $x_t$. Thanks to \eqref{eqn: Lipschitz}, we have
    \begin{equation*}
	   \| -D_x c(x_t, y_i) +D_x c(x_0, y_i) \| \leq L \| x_t - x_0 \| ,
    \end{equation*}
    for $i = 0,1$. Therefore,
    \begin{equation}\label{eqn: Dxc(xt)-w est in cone in section lem}
        \begin{aligned}
	       & \| -D_x c(x_t, y_1)+D_x c(x_t, y_0) -(q_1-q_0) \| \\
	       \leq & \| -D_x c(x_t, y_1) + D_x c(x_0, y_1) \| + \| -D_x c(x_t, y_0) + D_x c(x_0,y_0) \| \\
	       \leq & 2L\| x_t - x_0 \| \\
	       \leq & 2L^2 \| (tp_1 + (1-t)p_0) - p_0 \| \\
	       \leq & 2L^2 \rho.
        \end{aligned}
    \end{equation}
    where we have used $\| p_1 - p_0 \| \leq \rho$ for the last inequality. In addition, \eqref{eqn: mod. of conti. inverse Hessian} yields
    \begin{equation}\label{eqn: hessian difference est in cone in section lem}
	   \|(-D^2_{xy} c(x_t, y_0))^{-1} - (-D^2_{xy} c(x_0, y_0))^{-1}\| \leq \beta^2 \omega(\| x_t - x_0 \|) \leq \beta^2  \omega(L\rho).
    \end{equation}
    Therefore, continuing from \eqref{eqn: difference of c affine},
    \begin{equation}\label{eqn: difference of c affine 2}
        \begin{aligned}
	       & \int_0^1 \left\langle (-D^2_{xy} c(x_t, y_0))^{-1} (-D_x c(x_t, y_1)+D_x c(x_t, y_0) ), p_1 - p_0 \right\rangle dt \\
	       = & \int_0^1 \left\langle (-D^2_{xy} c(x_t, y_0))^{-1} (-D_x c(x_t, y_1)+D_x c(x_t, y_0) -(q_1 - q_0) ), p_1 - p_0 \right\rangle dt \\
	       & + \int_0^1 \left\langle ((-D^2_{xy} c(x_t, y_0))^{-1}-(-D^2_{xy} c(x_0, y_0))^{-1}) (q_1 - q_0), p_1 - p_0 \right\rangle dt \\
	       & + \int_0^1 \left\langle M_0^{-1}(q_1 - q_0) , p_1 - p_0 \right\rangle dt \\
	       \geq & -2 \beta L^2 \rho \| p_1 - p_0 \| - \beta^2 \omega(L\rho) \| q_1 - q_0 \| \| p_1 - p_0 \| \\
	       & + \cos (\theta) \| M_0^{-1}(q_1 - q_0) \| \| p_1 - p_0 \|
        \end{aligned}
    \end{equation}
    where we have used \eqref{eqn: Dxc(xt)-w est in cone in section lem}, \eqref{eqn: hessian difference est in cone in section lem}, and $p_1 \in \K_{\theta,v}^\rho+p_0$ to obtain the inequality above. Note that
        \begin{equation*}
	       \frac{l_0}{\alpha} \leq \frac{1}{\alpha} \| q_1 - q_0 \| \leq \| M_0^{-1}(q_1 - q_0) \|.
        \end{equation*}
    Therefore, 
        \begin{equation*}
            \begin{aligned}
                2 \beta L^2 \rho + \beta^2 \omega(\rho) \| q_1 - q_0 \| &\leq 2\beta L^2 \rho + \beta^2 l_1 \omega(\rho)  \\
	           & \leq \frac{l_0}{\alpha} \cos(\theta) \\
	           & \leq \cos(\theta) \| M_0^{-1}(q_1 - q_0) \|.
            \end{aligned}
        \end{equation*}
    where we have used \eqref{eqn: radius of cone in section} for the second line above. Thus \eqref{eqn: difference of c affine 2} is non-negative, and we combine \eqref{eqn: difference of c affine} to obtain
        \begin{equation*}
	       -c(x_1, y_1) + c(x_0, y_1) + c(x_1, y_0) - c(x_0, y_0) \geq 0.
        \end{equation*}
    In particular, $x_1 \in  \csec{f_{0}}{f_{1}}$, and $p_1 \in \left[ \csec{f_{0}}{f_{1}} \right]_{y_0}$. Since $p_1 \in (\K_{\theta,v}^\rho+p_0) \cap [\X ]_{y_0}$ is arbitrary, we obtain \eqref{eqn: cone in c-affine section}.
\end{proof}

\begin{Rmk}\label{rmk: dependency of rho}
We note here that the $\rho$ from Lemma \ref{lem: cone in c-affine section} depends on the universal quantities, $\theta$ and $l_0, l_1$, which are the bounds for $\| q_1 - q_0 \|$. We can choose $l_0$ and $l_1$ that only depends on $L$ and $\| y_1 - y_0 \|$ using \eqref{eqn: Lipschitz}:
    \begin{equation*}
        \frac{1}{L} \| y_1 - y_0 \| \leq \| q_1 - q_0 \| \leq L \| y_1 - y_0 \|.
    \end{equation*}
In particular, we can choose $\rho$ that does not depend on $x_0$ in Lemma \ref{lem: cone in c-affine section}.
\end{Rmk}

For a fixed angle $\theta$, if $y_1$ is clsoe enough to $y_0$, then we can find a cone in the section $\left[\csec{f_{0}}{f_{1}}\right]_{y_0}$ with opening angle $\theta$, but without bound on the radius. 

\begin{Lem}\label{lem: small tilting}
    We use notations from Lemma \ref{lem: section non empty interior}. Fix $\theta \in (0, \frac{\pi}{2})$ and 
    \begin{equation*}
        x_0 \in \{ x \in \X | f_{0} (x) = f_{1}(x)\}\subset \partial \csec{f_{0}}{f_{1}}.
    \end{equation*}
    Let $x_1 \in \X$ and let $p_i = -D_y c(x_i, y_0)$, $q_i = -D_x c(x_0, y_i)$, for $i=0,1$, and $M_0 = -D^2_{xy}c(x_0, y_0)$. Denote $v = M_0^{-1} (q_1 - q_0)$. Then there exists a constant $ \mu(\theta)$ such that if 
    \begin{equation*}
        \| v \| \leq \mu(\theta),
    \end{equation*}
    then we have
    \begin{equation*}
        \begin{aligned}
            &p_1 \in (\K_{\theta, v}+p_0) \cap [\X]_{y_0} \\ 
            \Rightarrow& \frac{1}{2} \| p_1 - p_0 \| \| v \| \cos(\theta)  \leq f_{1}(x_1) - f_{0}(x_1) \leq \frac{3}{2} \| p_1 - p_0 \| \| v \| \cos(\theta),            
        \end{aligned}
    \end{equation*}
    and
    \begin{equation*}
        \begin{aligned}
            &p_1 \in (\K_{\theta, -v}+p_0) \cap [\X]_{y_0} \\
            \Rightarrow& -\frac{3}{2}\| p_1 - p_0 \| \| v \| \cos(\theta) \leq  f_{1}(x_1) - f_{0}(x_1) \leq - \frac{1}{2}\| p_1 - p_0 \| \| v \| \cos(\theta).
        \end{aligned}
    \end{equation*}
\end{Lem}
\begin{proof}
    Define $\C: [\X]_{y_0} \to \R $ by 
    \begin{equation*}
        \C(p,y) = -c(\cexp{y_0}{p}, y) + c(x_0, y) .
    \end{equation*}
    Note that we have $\C(p,y_1)-\C(p,y_0) = f_{1}(\cexp{y_0}{p}) - f_{0}(\cexp{y_0}{p})$. Let $p_1 \in [\X]_{y_0}$. Then 
    \begin{equation}\label{eqn: Cy1-Cy0}
        \begin{aligned}
            \frac{d}{dt}\C(p_1, y_t) & =  \langle D_y \C(p_1, y_t), \frac{d}{dt}y_t \rangle  \\
            & =  \langle D_y \C(p_1,y_t), M_t^{-1}(q_1 - q_0) \rangle
        \end{aligned}
    \end{equation}
    where $y_t \in \cseg{x_0}[y_0,y_1]$ and $M_t = -D^2_{xy}c(x_0, y_t)$. Letting $p_{i,t} = -D_y c(x_i, y_t) $ for $i=0,1$, Then the last line of \eqref{eqn: Cy1-Cy0} is
    \begin{equation*}
         \langle p_{1,t}-p_{0,t}, M_t^{-1}(q_1 - q_0) \rangle.
    \end{equation*} 
    We estimate this using $p_1, p_0$ and $M_0$. We first evaluate the above formula at $t=0$.
    \begin{equation}\label{eqn: dt Cyt t=0}
        \frac{d}{dt}\C(p,y_t)\bigg|_{t=0} = \langle p_1 - p_0, v \rangle. 
    \end{equation}
    Next, we compute
    \begin{equation*}
        \begin{aligned}
            & (p_{1,t} - p_{0,t}) - (p_{1}-p_{0}) \\
            = &  \int_0^1 -D^2_{yx} c(x_s, y_t)(-D^2_{yx}c(x_s, y_0))^{-1}(p_1 - p_0)ds - (p_1 - p_0) \\
            = & \int_0^1 (D^2_{yx}c(x_s, y_0)-D^2_{yx}c(x_s, y_t))(-D^2_{yx}c(x_s, y_0))^{-1}(p_1 - p_0) ds,
        \end{aligned}
    \end{equation*}
    where $x_s \in \cseg{y_0}[x_0, x_1]$. Hence, using \eqref{eqn: alpha beta} and \eqref{eqn: mod. of conti. Hessian}, and noting $\| y_t - y_0 \| \leq Lt\| q_1 - q_0 \| \leq L \| q_1 - q_0 \|$ by \eqref{eqn: Lipschitz}, we obtain 
    \begin{equation*}
        \| (p_{1,t}-p_{0,t}) - (p_1 - p_0) \| \leq \beta \omega(L\| q_1 - q_0 \| )\|p_1 - p_0\|.
    \end{equation*}
    Then using \eqref{eqn: mod. of conti. inverse Hessian} and \eqref{eqn: alpha beta}, we compute
    \begin{equation}\label{eqn: Cy1-Cyt est 1}
        \begin{aligned}
            & \left| \langle p_{1,t} - p_{0,t}, M_t^{-1}(q_1 - q_0) \rangle - \langle p_1 - p_0, M_0^{-1}(q_1 - q_0) \rangle \right| \\
            \leq & \left| \langle p_{1,t} - p_{0,t}, M_t^{-1}(q_1 - q_0) \rangle - \langle p_1 - p_0, M_t^{-1} (q_1 - q_0) \rangle \right| \\
            & + \left| \langle p_1 - p_0, M_t^{-1} (q_1 - q_0) \rangle - \langle p_1 - p_0 , M_0^{-1} (q_1 - q_0) \rangle \right| \\
            \leq & \| (p_{1,t} - p_{0,t}) - (p_1 - p_0) \| \| M_t^{-1} \|_F \| q_1 - q_0 \| \\
            & + \| p_1 - p_0 \| \| M_t^{-1} - M_0^{-1} \|_F \| q_1 - q_0 \| \\
            \leq & \beta \omega(L\| q_1 - q_0 \|) \|p_1 - p_0\| \times \beta \times \|q_1 - q_0 \| \\
            & + \|p_1 - p_0 \| \times \beta^2\omega(L\| q_1 - q_0 \|) \times \| q_1 - q_0 \| \\
            = & 2\beta^2 \| p_1 - p_0 \| \|q_1 - q_0 \| \omega (L\|q_1 - q_0 \|).
        \end{aligned}
    \end{equation}
    We fix $\rho_\theta>0$ that satisfies
    \begin{equation}\label{eqn: rho th}
        \omega(\rho_\theta) \leq \frac{\cos(\theta)}{4\alpha\beta^2}.
    \end{equation}
    Note that $\theta\in (0, \frac{\pi}{2})$ so that $\cos(\theta)>0$, and therefore there exists $\rho_\theta$ satisfying \eqref{eqn: rho th}. Also, $\rho_{\theta}$ depends only on $\theta$ and the universal constants. Let 
    \begin{equation*}
        \mu(\theta) = \frac{\rho_\theta}{L\alpha}.
    \end{equation*}
    Then $\| v \| \leq \mu(\theta)$ yields
    \begin{equation*}
        \omega(L\| q_1 -q_0 \| ) = \omega(L \| M_0 v\| ) \leq \omega (\rho_{\theta}) \leq \frac{\cos(\theta)}{4\alpha\beta^2},
    \end{equation*}
    so that
    \begin{equation}\label{eqn: Cy1-Cyt est 2}
        \begin{aligned}
            2\beta^2 \| p_1 - p_0 \| \|q_1 - q_0 \| \omega (L\|q_1 - q_0 \|) &\leq 2\beta^2 \| p_1 - p_0 \| \| M_0 \| \| v \| \frac{\cos(\theta)}{4\alpha\beta^2} \\
            & \leq \frac{1}{2}\|p_1 - p_0\| \|v\| \cos(\theta).
        \end{aligned}
    \end{equation}
    Combining \eqref{eqn: Cy1-Cyt est 1} and \eqref{eqn: Cy1-Cyt est 2}, we observe
    \begin{equation*}
        |\langle p_{1,t}-p_{0,t}, M_t^{-1}(q_1-q_0)\rangle  - \langle p_1 - p_0, M_0^{-1} (q_1 - q_0) \rangle| \leq \frac{1}{2} \| p_1 - p_0 \|\| v \|.
    \end{equation*}
    Therefore, if $p_1 \in (\K_{\theta, v} +p_0)\cap [\X]_{y_0}$, then we obtain
    \begin{equation*}
        \begin{aligned}
             \frac{d}{dt} \C(p_1,y_t)  =& \langle p_{1,t} - p_{0,t}, M_t^{-1}(q_1-q_0) \rangle \\
             \geq& \langle p_1 - p_0 , M_0^{-1} (q_1 - q_0) \rangle \\
             &- |\langle p_{1,t}-p_{0,t}, M_t^{-1}(q_1-q_0)\rangle  - \langle p_1 - p_0, M_0^{-1} (q_1 - q_0) \rangle| \\
             \geq& \| p_1 - p_0 \| \|v\| \cos(\theta) - \frac{1}{2} \| p_1 - p_0 \|\| v \| \cos(\theta) \\
             =& \frac{1}{2} \| p_1 - p_0 \| \| v \| \cos(\theta)
        \end{aligned}
    \end{equation*}
    so that
    \begin{equation*}
        \C(p_1, y_1) - \C(p_1, y_0) = \int_0^1 \frac{d}{dt}\C(p_1, y_t) dt \geq \frac{1}{2}\| p_1 - p_0 \| \|v \| \cos(\theta).
    \end{equation*}
    We also have
    \begin{equation*}
        \begin{aligned}
             \frac{d}{dt} \C(p_1,y_t)  =& \langle p_{1,t} - p_{0,t}, M_t^{-1}(q_1-q_0) \rangle \\
             \leq& \langle p_1 - p_0 , M_0^{-1} (q_1 - q_0) \rangle \\
             & + |\langle p_{1,t}-p_{0,t}, M_t^{-1}(q_1-q_0)\rangle  - \langle p_1 - p_0, M_0^{-1} (q_1 - q_0) \rangle| \\
             \leq& \| p_1 - p_0 \| \|v\| \cos(\theta) + \frac{1}{2} \| p_1 - p_0 \|\| v \| \cos(\theta) \\
             =& \frac{3}{2} \| p_1 - p_0 \| \| v \| \cos(\theta),
        \end{aligned}
    \end{equation*}
    so that
    \begin{equation*}
        \C(p_1, y_1) - \C(p_1, y_0) = \int_0^1 \frac{d}{dt}\C(p_1, y_t) dt \leq \frac{3}{2}\| p_1 - p_0 \| \|v \| \cos(\theta).
    \end{equation*}
    This concludes the first part of the lemma. Similarly, if $p_1 \in (\K_{\theta, -v} +p_0)\cap [\X]_{y_0}$, then noting that $\langle p_1 - p_0, v \rangle \leq -\| p_1 - p_0 \| \| v \| \cos(\theta)$, we compute
    \begin{equation*}
        \begin{aligned}
            \frac{d}{dt} \C(p_1, y_t)  =& \langle p_{1,t} - p_{0,t}, M_t^{-1}(q_1-q_0) \rangle \\
             \leq& \langle p_1 - p_0 , M_0^{-1} (q_1 - q_0) \rangle \\
             &+ |\langle p_{1,t}-p_{0,t}, M_t^{-1}(q_1-q_0)\rangle  - \langle p_1 - p_0, M_0^{-1} (q_1 - q_0) \rangle| \\
             \leq& - \| p_1 - p_0 \| \| v \| \cos(\theta) + \frac{1}{2} \| p_1 - p_0 \| \|v \| \cos(\theta) \\
             =& - \frac{1}{2} \| p_1 - p_0 \| \| v \| \cos(\theta)
        \end{aligned}
    \end{equation*}
    and hence we obtain
    \begin{equation*}
        \C(p_1, y_1) - \C(p_1, y_0) = \int_0^1 \frac{d}{dt}\C(p_1, y_t)dt \leq -\frac{1}{2}\| p_1 - p_0 \| \|v\| \cos(\theta).
    \end{equation*}
    Also, 
    \begin{equation*}
        \begin{aligned}
            \frac{d}{dt} \C(p_1, y_t)  =& \langle p_{1,t} - p_{0,t}, M_t^{-1}(q_1-q_0) \rangle \\
             \geq& \langle p_1 - p_0 , M_0^{-1} (q_1 - q_0) \rangle \\
             &- |\langle p_{1,t}-p_{0,t}, M_t^{-1}(q_1-q_0)\rangle  - \langle p_1 - p_0, M_0^{-1} (q_1 - q_0) \rangle| \\
             \geq& - \| p_1 - p_0 \| \| v \| \cos(\theta) - \frac{1}{2} \| p_1 - p_0 \| \|v \| \cos(\theta) \\
             =& - \frac{3}{2} \| p_1 - p_0 \| \| v \| \cos(\theta)
        \end{aligned}
    \end{equation*}
    and therefore
    \begin{equation*}
        \C(p_1, y_1) - \C(p_1, y_0) = \int_0^1 \frac{d}{dt}\C(p_1, y_t)dt \geq -\frac{3}{2}\| p_1 - p_0 \| \|v\| \cos(\theta).
    \end{equation*}
    This yields the second part of the lemma.
\end{proof}

If we have Loeper's property, then the sections of $c$-affine functions are $c$-convex with respect to the corresponding $y$.

\begin{Lem}\label{lem: c-affine section c-convex}
Let the cost function $c$ satisfy Loeper's property \eqref{eqn: Loeper}, and we use the notations from Lemma \ref{lem: section non empty interior}. 
Then the section $\csec{f_{0}}{f_{1}}$ is $c$-convex with respect to $y_1$.
\end{Lem}
\begin{proof}
Note first that $f_{0}(x') = f_{1} (x') = 0$ so that $x' \in \csec{f_{0}}{f_{1}} \neq \emptyset$. Let $x_0, x_1 \in \csec{f_{0}}{f_{1}}$, and let $x_t \in \cseg{y_1}[x_0, x_1]$. Then \eqref{eqn: Loeper} implies 
\begin{align*}
	f_{1}(x_t) - f_{0}(x_t) &= -c(x_t, y_1) +c(x_t, y_0)\\
	& \leq \max \{ -c(x_0, y_1) + c(x_0, y_0), -c(x_1, y_1) + c(x_1, y_0) \} \\
	& = \max \{ f_{1}(x_0) - f_{0}(x_0) , f_{1}(x_1) - f_{0}(x_1) \} \\
	& \leq 0,
\end{align*}
where we have used $x_0, x_1 \in \csec{f_{0}}{f_{1}}$ in the last inequality. Hence $x_t \in \csec{f_{0}}{f_{1}}$. 
\end{proof}

\subsection{$c$-convex functions}
In this subsection, we define the $c$-convex functions and show some properties of them. The $c$-affine functions and the $c$-convex functions are very important concepts in the optimal transport theory. It is well known in the optimal transport literature that the solution $T$ of the optimal transport problem is given by $T(x) = \cexp{x}{\nabla \phi(x)}$ for some $c$-convex function $\phi$. Hence, it is essential to study properties of the $c$-convex functions in the optimal transport research. 

\begin{Def}
    We define \emph{$c$-convex functions} to be the supremum envelopes of $c$-affine functions, i.e. a function $\phi: \X \to \R$ is $c$-convex function if
\begin{equation}\label{eqn: c-conv function}
	\phi(x) = \sup_{y \in \Y} \{ -c(x,y) + \psi(y) \}
\end{equation}
for some $\psi: \Y \to \R\cup\{ -\infty \}$ that is not identically $-\infty$.
\end{Def}

\begin{Rmk}
The $c$-affine functions are $c$-convex by definition.
\end{Rmk}

We first prove Lipschitz regularity of $c$-convex functions.

\begin{Lem}\label{lem: Lipschitz c-conv}
Let $\phi : \X \to \R$ be a $c$-convex function. Then $\phi$ is Lipschitz with the Lipschitz constant $\clip$. 
\end{Lem}
\begin{proof}
Let $x_0, x_1 \in \X$, and let $\epsilon >0$. By \eqref{eqn: c-conv function}, there exists $y_\epsilon \in \Y$ such that
\begin{equation*}
\phi(x_1) \leq -c(x_1, y_\epsilon) + \psi(y_\epsilon) + \epsilon.
\end{equation*}
Then we compute
\begin{align*}
\phi(x_1) - \phi(x_0) & \leq -c(x_1, y_\epsilon) + \psi(y_\epsilon) + \epsilon - \sup_{y \in \Y} \{ -c(x_0,y) + \psi(y) \} \\
& \leq  -c(x_1, y_\epsilon) + \psi(y_\epsilon) + \epsilon +c(x_0, y_\epsilon) - \psi(y_\epsilon) \\
& = -c(x_1, y_\epsilon) + c(x_0, y_\epsilon) + \epsilon \\
& \leq \clip \| x_1 - x_0 \| + \epsilon.
\end{align*}
Taking $\epsilon \to 0$ in the last line, we obtain 
\begin{equation}\label{eqn: phi lip oneside}
\phi(x_1) - \phi(x_0) \leq \clip \| x_1 - x_0 \|.
\end{equation}
Using the same method with the role of $x_0$ and $x_1$ changed, we can also obtain $\phi(x_0) - \phi(x_1) \leq \clip \| x_1 - x_0 \|$. Combining with \eqref{eqn: phi lip oneside}, we obtain 
\begin{equation*}
| \phi(x_1) - \phi(x_0) | \leq \clip \| x_1 - x_0 \|,
\end{equation*}
which yields the Lipschitz continuity of $\phi$ with the Lipschitz constant $\clip$.
\end{proof}

$c$-convex functions has an analogy of subdifferentials of the convex functions which we call $c$-subdifferentials

\begin{Def}\label{lem: c-subdifferential}
    Let $\phi: \X \to \R$ be a $c$-convex function and let $x_0 \in \X$. If there exists $y_0 \in \Y$ such that 
    \begin{equation*}
        \begin{aligned}
            -c(x, y_0) + c(x_0, y_0) + \phi(x_0) \leq \phi(x) \quad \forall x \in \X,
        \end{aligned}
    \end{equation*}
    then we call that $y_0$ is a \emph{$c$-subdifferential of $\phi$ at $x_0$}.
\end{Def}

Any $c$-convex function has a $c$-subdifferential at any point in the domain.

\begin{Lem}
    Let $\phi: \X \to \R$ be a $c$-convex function. Then, for any $x_0 \in \X$, there exists $y_0 \in \Y$ which is a $c$-subdifferential of $\phi$ at $x_0$.
\end{Lem}
\begin{proof}
    By the definition of $c$-convex functions, there exists a function $\psi : \Y \to \R \cup \{ -\infty \}$, not identically $-\infty$, such that
    \begin{equation*}
        \phi(x) = \sup\{ -c(x,y) + \psi(y) | y \in \Y \}.
    \end{equation*}
    Note first that, since $\psi$ is not identically $-\infty$, we have
    \begin{equation*}
        \phi(x) \geq -c(x, y')+\psi(y'),
    \end{equation*}
    where $\psi(y') > -\infty$. In particular, $\phi$ is bounded from below and Lemma \ref{lem: Lipschitz c-conv} yields that $\phi$ is a bounded function. Let $y_k \in \Y$ be a sequence that approximates $\phi(x_0)$ in the above supremum:
    \begin{equation}\label{eqn: c-subdiff approx}
        \lim_{k \to \infty} -c(x_0, y_k) + \psi(y_k) = \phi(x_0).
    \end{equation}
    The above equality implies that $\psi(y_k)$ is bounded. Using the compactness of $\Y$, up to a subsequence (not relabeled), we obtain $y_k \to y_0$ for some $y_0 \in \Y$ and $\psi(y_k) \to h$ for some $h \in \R$. Then passing the limit $k \to \infty$ in \eqref{eqn: c-subdiff approx}, we obtain
    \begin{equation*}
        -c(x_0, y_0) + h = \phi(x_0).
    \end{equation*}
    Moreover, for any $x \in \X$, we have
    \begin{equation*}
            -c(x, y_0) + h  = \lim_{k \to \infty}-c(x,y_k)+\psi(y_k) \leq \phi(x).
    \end{equation*}
    Therefore, $y_0$ is a subdifferential of $\phi$ at $x_0$.
\end{proof}

\section{c-chord}\label{sec: c-chord}
In this section, we define the $c$-chord which will be used to define the alternative $c$-convex function in the next section. In subsection \ref{subsec: c-chord no mtw}, we define the $c$-chord and prove a few properties of the $c$-chord without Loeper's property \eqref{eqn: Loeper}. In subsection \ref{subsec: c-chord with mtw}, we provide properties of the $c$-chord which can be derived from Loeper's property.

\subsection{$c$-chord and basic properties (without Loeper's property)}\label{subsec: c-chord no mtw}

\begin{Def}\label{def: c-chord}
Let $X_i = (x_i, u_i) \in \X \times \R$ for $i = 0,1$. We define $\Fx : \X \to \R$ by
\begin{equation}\label{eqn: c-chord}
	\Fx(x) = \sup \{ -c(x,y) +h | y \in \Y, h \in \R, -c(x_i,y) + h \leq u_i \}.
\end{equation}
We call $\Fx$ the \emph{$c$-chord between $X_0$ and $X_1$}. If 
\begin{equation*}
\Fx(x_i) = u_i
\end{equation*}
for $i = 0,1$, then we also say that \emph{$\Fx$ is the $c$-chord connecting $X_0$ and $X_1$}.
\end{Def}

The $c$-chord is an analogy of the segment that connects the two points in the graph of a convex function, as explained in the introduction. The graph of the function \eqref{eqn: c-chord} will be the segment connecting the two points $X_0$ and $X_1$ if $c(x,y) = 
-\langle x,y \rangle$ and $\Y = \Rn$. The segment connecting the two points used for defining the convex function is sometimes called a 'chord'. Hence, inspired by the chord, we have used the name '$c$-chord' for the function \eqref{eqn: c-chord}.

\begin{Rmk}\label{rmk: Lipschitz c-chord}
the $c$-chords are, by definition, $c$-convex functions. Hence, by Lemma \ref{lem: Lipschitz c-conv}, every $c$-chord is Lipschitz with Lipschitz constant $\clip$.
\end{Rmk}

The next lemma shows that, to find the value of the $c$-chord $\Fx$, we only need to consider the $c$-affine functions which pass through either $X_0$ or $X_1$.

\begin{Lem}\label{lem: c-chord with c-affine passing end points}
    Let $X_i =(x_i, u_i) \in \X \times \R$ for $i = 0,1$. Then we have
    \begin{equation*}
        \Fx(x) =\sup \left( \bigcup_{i=0,1}\{-c(x, y)+c(x_i, y) + u_i | -c(x_{i+1}, y)+c(x_i, y) + u_i \leq u_{i+1} \}  \right),
    \end{equation*}
    where we have used the convention $1+1=0$ in the above equation.
\end{Lem}
\begin{proof}
    Observe first that we have
    \begin{equation*}
        \begin{aligned}
            &\bigcup_{i=0,1}\{-c(x, y)+c(x_i, y) + u_i | -c(x_{i+1}, y)+c(x_i, y) + u_i \leq u_{i+1} \} \\
            \subset & \{ -c(x,y) +h | y \in \Y, h \in \R, -c(x_i,y) + h \leq u_i \},
        \end{aligned}
    \end{equation*}
    and therefore 
    \begin{equation*}
        \Fx(x) \geq \sup \left( \bigcup_{i=0,1}\{-c(x, y)+c(x_i, y) + u_i | -c(x_{i+1}, y)+c(x_i, y) + u_i \leq u_{i+1} \}  \right).
    \end{equation*}
    On the other hand, suppose $-c(\cdot,y)+h$ is such that $-c(x_i, y) +h \leq u_i$ for $i = 0,1$. Let $\lambda(y,h) = \min\{ u_i +c(x_i, y) - h| i=0,1\}\geq 0$. If $\lambda(\lambda,h) = u_i +c(x_i, y)-h$, then we have
    \begin{equation*}
        -c(x,y) + h \leq -c(x,y)+h +\lambda  = -c(x, y) +c(x_i, y) + u_i.
    \end{equation*}
    Therefore, we obtain
    \begin{equation*}
        \begin{aligned}
            \Fx(x) = & \sup \{ -c(x,y) +h | y \in \Y, h \in \R, -c(x_i,y) + h \leq u_i \} \\
            \leq &\sup \left\{ -c(x, y)+h+\lambda(y,u) \bigg| \begin{matrix} -c(x_i,y) + h \leq u_i \end{matrix} \right\} \\
            = & \sup \left( \left\{ -c(x, y)+h+\lambda(y,u) \bigg| \begin{matrix} -c(x_i,y) + h \leq u_i \\ \lambda = u_0 +c(x_0, y) -h \end{matrix}  \right\} \right) \\
            & \cup \sup \left( \left\{ -c(x, y)+h+\lambda(y,u) \bigg| \begin{matrix} -c(x_i,y) + h \leq u_i \\ \lambda = u_1 +c(x_1, y) -h \end{matrix}  \right\} \right) \\
            = & \sup \left( \bigcup_{i=0,1}\{-c(x, y)+c(x_i, y) + u_i | -c(x_{i+1}, y)+c(x_i, y) + u_i \leq u_{i+1} \}  \right).
        \end{aligned}
    \end{equation*}
    This concludes the proof.
\end{proof}

A simple comparison property of the $c$-chord can be derived from the definition.

\begin{Lem}\label{lem: ordered F}
Let $X_i = (x_i, u_i)$ and $X_i' = (x_i, u_i')$ for some $x_i \in \X$, $u_i, u_i' \in \R$, for $i=0,1$. Suppose $u_i' \leq  u_i$ for $i = 0, 1$, then we have
\begin{equation*}
	F_{X_0' X_1'}(x) \leq F_{X_0 X_1}(x)
\end{equation*}
for any $x \in \X$. If $u_i' + \lambda = u_i$ for some $\lambda \geq 0$, for $i=0,1$, then 
\begin{equation*}
	F_{X_0' X_1'}(x) +\lambda = F_{X_0, X_1}(x) \quad \forall x \in \X.
\end{equation*}
In particular, if $u_i' < u_i$ for $i=0,1$, then $\Fxp(x) < \Fx(x)$ for any $x \in X$.
\end{Lem}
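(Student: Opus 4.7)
The proof will rely directly on the defining formula \eqref{eqn: c-chord}, observing how the feasible set $\{(y,h): -c(x_i,y)+h \le u_i, \ i=0,1\}$ depends on the heights $u_0, u_1$.

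For the first inequality, the plan is to note that if $u_i' \le u_i$ for $i=0,1$, then every pair $(y,h)$ satisfying $-c(x_i,y)+h \le u_i'$ automatically satisfies $-c(x_i,y)+h \le u_i$. Hence the admissible set defining $F_{X_0' X_1'}(x)$ is a subset of the one defining $F_{X_0 X_1}(x)$, and the supremum over a smaller set is bounded above by the supremum over a larger set.

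For the equality statement, I would use a shift argument. Suppose $u_i = u_i' + \lambda$ with $\lambda \ge 0$. The map $(y,h) \mapsto (y, h+\lambda)$ is a bijection from the admissible set for $F_{X_0' X_1'}$ to the admissible set for $F_{X_0 X_1}$: feasibility is preserved because $-c(x_i,y) + (h+\lambda) \le u_i' + \lambda = u_i$, and conversely for any $(y,h')$ feasible for the right-hand problem, $(y, h'-\lambda)$ is feasible for the left-hand one. Under this bijection the objective $-c(x,y)+h$ changes by exactly $\lambda$, so taking the supremum on both sides yields $F_{X_0 X_1}(x) = F_{X_0' X_1'}(x) + \lambda$.

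For the strict inequality, suppose $u_i' < u_i$ for $i=0,1$, and set $\lambda = \min\{u_0 - u_0', u_1 - u_1'\} > 0$ together with intermediate heights $u_i'' := u_i' + \lambda \le u_i$. Applying the equality part to the pair $(X_0', X_1')$ and $(X_0'', X_1'')$, and then the monotonicity part to $(X_0'', X_1'')$ and $(X_0, X_1)$, gives
\begin{equation*}
F_{X_0' X_1'}(x) + \lambda = F_{X_0'' X_1''}(x) \le F_{X_0 X_1}(x),
\end{equation*}
and since $\lambda > 0$, the desired strict inequality follows. No step is really an obstacle; the whole argument is a direct manipulation of the definition, and I would expect the write-up to be only a few lines.
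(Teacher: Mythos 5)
Your proof is correct and takes essentially the same route as the paper: monotonicity follows from inclusion of feasible sets, the equality from the shift $(y,h)\mapsto(y,h+\lambda)$, and the strict inequality from combining the two. In fact, your treatment of the strict case via $\lambda=\min\{u_0-u_0',\,u_1-u_1'\}$ and intermediate heights $u_i''$ is slightly more careful than the paper's, which writes $\lambda = u_i - u_i'$ and tacitly treats this quantity as independent of $i$.
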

\begin{proof}
Note that if $-c(x_i, y) +h \leq u_i'$, then $-c(x_i, y) +h \leq u_i$. This implies
\begin{equation*}
	\{ -c(x,y) + h | -c(x_i, y) + h \leq u_i', i = 0,1\} \subset \{ -c(x,y) + h | -c(x_i, y) + h \leq u_i, i = 0,1\}.
\end{equation*}
Hence, taking supremum over the above sets, the above inclusion induces 
\begin{equation*}
F_{X_0' X_1'}(x) \leq F_{X_0 X_1}(x). 
\end{equation*}
If $u_i' +h= u_i, i = 0,1$, then
\begin{align*}
	F_{X_0' X_1'}(x)+ \lambda & = \sup\{ -c(x,y) + h | -c(x_i, y) + h \leq u_i', i = 0,1\}+ \lambda \\
	& = \sup\{ -c(x,y) + h + \lambda | -c(x_i, y) + h \leq u_i', i = 0,1\} \\
	& = \sup\{ -c(x,y) + h' | -c(x_i, y) + h' - \lambda \leq u_i' = u_i-\lambda, i = 0,1\} \\
	& = F_{X_0 X_1}(x).
\end{align*}
If $u_i' < u_i$ for $i=0,1$, then let $\lambda = u_i - u_i'>0$ and we obtain
\begin{equation*}
	\Fxp(x) < \Fxp(x) + \lambda = \Fx(x).
\end{equation*}
\end{proof} 

The $c$-chord $\Fx$ is also a Lipschitz function by Remark \ref{rmk: Lipschitz c-chord}. Thus it is not true that, for any $X_i = (x_i, u_i) \in \R^{n+1}$ for $i =0,1$, the $c$-chord $\Fx$ connects $X_0$ and $X_1$: If $u_1 - u_0>0$ is too big, then we can have 
\begin{equation*}
	\Fx(x_1) < u_1,
\end{equation*}
so that $X_1 \not\in \graph{\Fx}$, i.e. $\Fx$ does not pass through $X_1$. In this case, we can find another point $X_1'  =(x_1, \Fx(x_1))$, and observe that $\Fx$ connects $X_0$ and $X_1'$.

\begin{Lem}\label{lem: actual F}
Let $X_i = (x_i, u_i) \in \X \times \R$, $u_i' = \Fx(x_i)$ and $X_i' = (x_i, u_i')$ for $i = 0,1$. Then $\Fx = \Fxp$.
\end{Lem}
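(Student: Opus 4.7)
The plan is to establish the two inequalities $\Fxp \leq \Fx$ and $\Fx \leq \Fxp$ separately, directly from the definition of $c$-chord as a supremum over an admissible set of $c$-affine functions.

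\textbf{Step 1 (comparing $u_i$ and $u_i'$).} First I would observe that for every admissible pair $(y,h)$ in the defining supremum of $\Fx$, we have $-c(x_i,y) + h \leq u_i$ by construction. Taking the supremum over all such pairs gives
\begin{equation*}
u_i' = \Fx(x_i) = \sup\{-c(x_i,y) + h \mid -c(x_j,y) + h \leq u_j,\ j=0,1\} \leq u_i.
\end{equation*}
Hence $u_i' \leq u_i$ for $i = 0,1$, and Lemma \ref{lem: ordered F} immediately yields $\Fxp(x) \leq \Fx(x)$ for every $x \in \X$.

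\textbf{Step 2 (the reverse inequality).} For the other direction I would show that every $(y,h)$ admissible for the supremum defining $\Fx$ is also admissible for the supremum defining $\Fxp$. Suppose $(y,h)$ satisfies $-c(x_j,y)+h \leq u_j$ for $j=0,1$. Then the quantity $-c(x_i,y)+h$ (for either $i=0$ or $i=1$) is itself an element of the set whose supremum defines $\Fx(x_i)$. Therefore
\begin{equation*}
-c(x_i,y) + h \leq \Fx(x_i) = u_i',
\end{equation*}
which is precisely the admissibility condition in the definition of $\Fxp$. Thus the admissible set for $\Fx$ is contained in the admissible set for $\Fxp$, and taking suprema gives $\Fx(x) \leq \Fxp(x)$ for every $x \in \X$.

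\textbf{Step 3 (conclusion).} Combining Steps 1 and 2 gives $\Fx = \Fxp$ on $\X$, which is the claim.

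There is no real obstacle here; the only subtle point is Step 2, where one must recognize that replacing the potentially loose bounds $u_i$ with the tight values $u_i' = \Fx(x_i)$ does \emph{not} shrink the admissible set, because the original admissible pairs already satisfy the tightened inequalities by the very definition of $\Fx(x_i)$. No regularity assumption on $c$ beyond what is already standing (continuity, bi-twist, non-degeneracy) and no appeal to Loeper's property is needed.
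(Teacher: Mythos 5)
Your proof is correct and follows essentially the same two-step argument as the paper: first $u_i' \le u_i$ combined with Lemma \ref{lem: ordered F} gives $\Fxp \le \Fx$, and then the observation that any $(y,h)$ admissible for $\Fx$ automatically satisfies $-c(x_i,y)+h \le \Fx(x_i) = u_i'$ gives the reverse inequality.
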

\begin{proof}
By definition of $\Fx$ \eqref{eqn: c-chord}, we have $u_i' =\Fxp(x_i) \leq u_i$ for $i=0,1$. Therefore Lemma \ref{lem: ordered F} yields that we have $\Fxp \leq \Fx$.\\
On the other hand, by definition of $\Fx$, if $y \in \Y$ and $h \in \R$ satisfy $-c(x_i,y) +h \leq u_i$, then 
\begin{equation}\label{eqn: -cxi leq ui'}
	-c(x_i,y)+h \leq \Fx(x_i) = u_i'.
\end{equation}
Therefore $-c(x, y )+h \leq \Fxp(x)$ by the definition of $c$-chord \eqref{eqn: c-chord}. Hence, taking sup on \eqref{eqn: -cxi leq ui'} over $y$ and $h$ satisfying $-c(x_i,y) +h \leq u_i$, we obtain
\begin{align*}
	\Fx(x) &= \sup\{-c(x,y)+h | y \in \Y, h \in \R, -c(x_i,y) +h \leq u_i, i=0,1 \} \\
	& \leq \sup \{-c(x,y)+h | y \in \Y, h \in \R, -c(x_i,y)+h \leq u_i', i=0,1 \} \\
	& = \Fxp(x).
\end{align*} 
\end{proof}

Lemma \ref{lem: actual F} yields that for $X_i = (x_i, u_i) \in \R^{n+1}$, if we set $u_i' = \Fx(x_i)$, and $X_i' = (x_i, u_i') \in \R^{n+1}$, then $\Fx$ is a $c$-chord connecting $X_0'$ and $X_1'$. In fact, if $\Fx$ is a $c$-chord connecting $X_0$ and $X_1$, then there exists a $c$-affine function that passes through both $X_0$ and $X_1$.

\begin{Lem}\label{lem: existence of touching c}
Let $X_i = (x_i, u_i) \in \X \times \R$ for $i=0,1$. Suppose $\Fx(x_i) = u_i$, then there exist $y \in \Y$ and $h \in \R$ such that 
\begin{equation}\label{eqn: the touching c}
	-c(x_i, y) +h = u_i.
\end{equation}
\end{Lem}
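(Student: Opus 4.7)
My plan is to eliminate $h$ from the supremum defining $\Fx$, reduce the problem to a one-dimensional intermediate value argument on $\Y$, and produce the desired $(y,h)$ explicitly. First I observe that for each fixed $y \in \Y$, the largest admissible $h$ in \eqref{eqn: c-chord} is $h(y) := \min\{u_0 + c(x_0, y),\, u_1 + c(x_1, y)\}$, so that
\begin{equation*}
\Fx(x) = \sup_{y \in \Y} \bigl(-c(x,y) + \min\{u_0 + c(x_0, y),\, u_1 + c(x_1, y)\}\bigr).
\end{equation*}
Evaluating at $x = x_0$ and $x = x_1$ and writing $f(y) := c(x_0, y) - c(x_1, y)$, the two assumed equalities $\Fx(x_i) = u_i$ become
\begin{equation*}
\sup_{y \in \Y} \min\{u_0,\, u_1 - f(y)\} = u_0, \qquad \sup_{y \in \Y} \min\{u_1,\, u_0 + f(y)\} = u_1.
\end{equation*}

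Next I will translate these into extremal conditions on $f$. The first equality forces $\sup_y(u_1 - f(y)) \geq u_0$, i.e.\ $\min_y f(y) \leq u_1 - u_0$; the minimum is attained since $\Y$ is compact and $f$ is continuous. Symmetrically, the second equality gives $\max_y f(y) \geq u_1 - u_0$. Hence there are points $y_-, y_+ \in \Y$ with $f(y_-) \leq u_1 - u_0 \leq f(y_+)$.

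I then exploit connectedness of $\Y$: the standing assumption that $\Y$ is $c$-convex with respect to every $x \in \X$, combined with the bi-twist condition, presents $\Y$ as the image under the bi-Lipschitz map $\cexp{x}{\cdot}$ of the convex set $[\Y]_x$, hence connected. Concretely, the $c$-segment $\cseg{x_0}[y_-, y_+]$ is a continuous curve joining $y_-$ to $y_+$ inside $\Y$. The intermediate value theorem, applied to the continuous function $f$ on this connected set, produces $y^\ast \in \Y$ with $f(y^\ast) = u_1 - u_0$. Setting $h := u_0 + c(x_0, y^\ast)$, which by the defining identity for $y^\ast$ equals $u_1 + c(x_1, y^\ast)$, a direct substitution yields $-c(x_i, y^\ast) + h = u_i$ for $i=0,1$, establishing \eqref{eqn: the touching c}.

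I do not expect any serious obstacle: each step is either bookkeeping on suprema of $\min$ expressions, compactness of $\Y$, or the intermediate value theorem for a continuous function on a connected set. The only place where the geometric hypotheses genuinely enter is in the connectedness of $\Y$, and this is immediate from the standing $c$-convexity and bi-twist assumptions recorded in Section \ref{subsec: conditions}.
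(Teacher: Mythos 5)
Your proof is correct, and it takes a genuinely different and notably cleaner route than the paper's. The paper extracts convergent subsequences of the approximating pairs $(y_{j,k},h_{j,k})$ to obtain, for each $j$, a pair $(y_j,h_j)$ with $-c(x_j,y_j)+h_j=u_j$ and $-c(x_{j+1},y_j)+h_j\leq u_{j+1}$; it then finds an intermediate point $x'\in\cseg{y_0}[x_0,x_1]$ where the two $c$-affine functions agree, and finally applies the intermediate value theorem a second time along the $c$-segment $\cseg{x'}[y_0,y_1]$ to locate the touching $c$-affine function. You instead eliminate $h$ at the outset by noting that for each fixed $y$ the optimal $h$ is $h(y)=\min\{u_0+c(x_0,y),\,u_1+c(x_1,y)\}$, reducing the two assumed equalities $\Fx(x_i)=u_i$ to the single scalar condition that $f(y)=c(x_0,y)-c(x_1,y)$ takes values on both sides of $u_1-u_0$; connectedness of $\Y$ (a byproduct of the standing $c$-convexity and bi-twist hypotheses, since $\Y=\cexp{x_0}{[\Y]_{x_0}}$ is the continuous image of a convex set) then gives $y^\ast$ with $f(y^\ast)=u_1-u_0$ by one application of the intermediate value theorem, and $h=u_0+c(x_0,y^\ast)$ does the job. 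What your approach buys is the avoidance of the double compactness/subsequence argument and the second IVT step: the entire geometric input is compressed into one use of connectedness of $\Y$. The paper's version, while longer, keeps the construction explicitly anchored to $c$-segments in both variables, which foreshadows the style of argument used in the surrounding lemmas; but for this statement in isolation your proof is both correct and more economical.
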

\begin{proof}
Since $\Fx(x_i) = u_i$, by the definition of a $c$-chord \eqref{eqn: c-chord}, we obtain sequences $(y_{j,k},h_{j,k}) \in \Y  \times \R$, where $j = 0,1$, such that
\begin{align}
	&-c(x_i, y_{j,k}) + h_{j,k} \leq u_i, i=0,1, \label{eqn: exist touching c: below ui at xi}\\
	&\lim_{k \to \infty} -c(x_j, y_{j,k} ) + h_{j,k} = u_j. \label{eqn: exist touching c: approaching uj at xj}
\end{align} 
Note that \eqref{eqn: exist touching c: below ui at xi} and \eqref{eqn: exist touching c: approaching uj at xj} implies that, for any $\epsilon>0$, and for big enough $k$, we have
\begin{equation}\label{eqn: exist touching c epsilon error}
	 u_j +c(x_j, y_{j,k}) -h_{j,k} \leq \epsilon.
\end{equation}
\eqref{eqn: exist touching c: below ui at xi} and \eqref{eqn: exist touching c epsilon error} implies
\begin{equation*}
	\inf_{\X \times \Y} c + u_j-\epsilon \leq h_{j,k} \leq \sup_{\X \times \Y} c + u_j.
\end{equation*}
Hence $h_{j,k}$ is bounded. Since $\Y$ is also compact, up to a subsequence of each sequence (not relabeled), we obtain
\begin{equation*}
    \lim_{k \to \infty} y_{j,k} = y_j \quad \textrm{and} \quad \lim_{k \to \infty} h_{j,k} = h_j.
\end{equation*}
Note that \eqref{eqn: exist touching c: below ui at xi} and \eqref{eqn: exist touching c: approaching uj at xj} yields that we have $-c(x_{j+1}, y_j) +h_j \leq u_{j+1}$ and $-c(x_j, y_j) + h_j = u_j$ (with convention $1+1=0$ in $j$ index). Then, by the intermediate value theorem, there exists $x' \in \cseg{y_0}[x_0, x_1]$ such that 
\begin{equation*}
	-c(x', y_0) + h_0 = -c(x', y_1) + h_1 =: \lambda.
\end{equation*}
Let $y_t \in \cseg{x'}[y_0, y_1]$, and define
\begin{equation*}
	g_j(t) = -c(x_j, y_j) + h_j + c(x_j, y_t) - c(x', y_t) - \lambda.
\end{equation*}
Then we have 
\begin{align*}
	g_0(0) &= -c(x_0, y_0) +h_0 + c(x_0, y_0) - c(x', y_0) - \lambda\\
	& = 0 \\
	& \leq -c(x_1, y_1) + h_1 +c(x_1, y_0) - h_0 \\
	& =  -c(x_1, y_1) + h_1 + c(x_1, y_0) - c(x', y_0) - \lambda = g_1(0)
\end{align*}
and similarly $g_0 (1) \geq g_1(1)$. Hence, using the intermediate value theorem for $g_1 - g_0$, we obtain that there exists $t' \in [0,1]$ such that
\begin{equation*}
	g_0(t') = g_1(t') =: u.
\end{equation*}
Consider $f(x) = -c(x, y_{t'}) +c(x', y_{t'}) + \lambda + u$. We observe that
\begin{align*}
	f(x_0) & = -c(x_0, y_{t'}) + c(x', y_{t'}) +\lambda+ u \\
	& = -c(x_0, y_{t'}) + c(x', y_{t'})+\lambda -c(x_0, y_0) + h_0 + c(x_0, y_{t'}) - c(x', y_{t'}) - \lambda \\
	& = -c(x_0, y_0) + h_0 = u_0.
\end{align*}
Similarly, $f(x_1) = u_1$. Therefore, \eqref{eqn: the touching c} holds with $y=y_{t'}$ and $h=c(x', y_{t'}) + \lambda + u$.
\end{proof}

\subsection{Properties of $c$-chord using Loeper's property}\label{subsec: c-chord with mtw}
If the cost function satisfies Loeper's property, then $c$-chords have several more nice properties. The first lemma in this subsection shows that a $c$-chord is equal to some $c$-affine function that satisfies \eqref{eqn: the touching c} on a $c$-segment.

\begin{Lem}\label{lem: F=-c on segment}
Let $c$ satisfy Loeper's property. Let $X_i = (x_i,u_i) \in \X \times \R$ for $i=0,1$. Suppose $y \in \Y$ be such that $-c(x_i, y) + h = u_i$, $i=0,1$ for some $h \in \R$. Let $x_t \in \cseg{y}[x_0,x_1]$. Then we have
\begin{equation}\label{eqn: F=-c on segment}
F_{X_0 X_1}(x_t) = -c(x_t, y) + h.
\end{equation}
\end{Lem}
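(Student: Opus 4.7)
The plan is to prove the identity by a two-sided inequality, using the definition of $F_{X_0X_1}$ as a supremum and invoking Loeper's property once for the hard direction.

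First I would establish the lower bound $F_{X_0X_1}(x_t) \geq -c(x_t, y) + h$. By assumption, the pair $(y, h)$ satisfies $-c(x_i, y) + h = u_i$ for $i=0,1$, so $(y,h)$ is an admissible pair in the supremum defining $F_{X_0X_1}$ at any point. Evaluating the candidate $-c(\,\cdot\,, y) + h$ at $x_t$ therefore gives a lower bound.

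For the upper bound $F_{X_0X_1}(x_t) \leq -c(x_t, y) + h$, I would take an arbitrary admissible pair $(y', h') \in \Y \times \R$ with $-c(x_i, y') + h' \leq u_i$ for $i=0,1$, and show that $-c(x_t, y') + h' \leq -c(x_t, y) + h$. Rewriting the admissibility as
\begin{equation*}
-c(x_i, y') + c(x_i, y) \leq h - h', \quad i = 0, 1,
\end{equation*}
using $u_i = -c(x_i,y)+h$, the right-hand side becomes a uniform bound on a quantity to which Loeper's property directly applies: since $x_t \in \cseg{y}[x_0, x_1]$, Definition \ref{def: Loeper} (with $y$ playing the role of $y_0$ and $y'$ the role of $y$) gives
\begin{equation*}
-c(x_t, y') + c(x_t, y) \leq \max_{i=0,1}\{ -c(x_i, y') + c(x_i, y)\} \leq h - h'.
\end{equation*}
Rearranging yields $-c(x_t, y') + h' \leq -c(x_t, y) + h$, and taking the supremum over all admissible $(y', h')$ produces the desired upper bound.

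Combining the two inequalities gives \eqref{eqn: F=-c on segment}. There is no real obstacle here; the only subtlety is matching the variables in Loeper's property correctly, since the roles of the two $y$'s are swapped relative to the lemma's notation (the $y$ defining the $c$-segment in Definition \ref{def: Loeper} corresponds to the distinguished $y$ from the hypothesis, while the competitor $y'$ coming from the supremum plays the role of the ``other'' point).
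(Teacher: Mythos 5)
Your proposal is correct and follows essentially the same route as the paper: show $-c(\cdot,y)+h$ is an admissible competitor for the lower bound, then apply Loeper's property (with the roles of the two $\Y$-points swapped exactly as you describe) to show any admissible $(y',h')$ is dominated by $-c(x_t,y)+h$, yielding the upper bound. The paper chains the inequalities slightly differently in presentation, but the argument is the same.
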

\begin{proof}
Let $y' \in \Y$, $h' \in \R$ and suppose $-c(x_i, y')+h' \leq u_i$ for $i = 0,1$. Then, using \eqref{eqn: Loeper}, we obtain
\begin{align*}
-c(x_t, y') + c(x_t, y) & \leq \max\{ -c(x_0, y') + c(x_0, y), -c(x_1, y') + c(x_1, y) \} \\
& = \max\{-c(x_0, y') + h-u_0, -c(x_1, y') + h -u_1 \}\\
& \leq \max\{ u_0 - h' + h - u_0, u_1 - h' +h -u_1 \} \\
& = h-h',
\end{align*} 
where we have used $-c(x_i, y) +h= u_i$ at the second line and $-c(x_i, y')+h' \leq u_i$ at the third line in the above inequalities. Rearranging terms, we obtain
\begin{equation*}
-c(x_t, y') + h' \leq -c(x_t, y) + h.
\end{equation*}
Therefore, by the definition of $F_{X_0 X_1}$, we obtain
\begin{equation*}
\Fx(x_t) = \sup\{ -c(x_t, y') + h' | -c(x_i, y')+h' \leq u_i, i=0,1 \} \leq -c(x_t, y) + h.
\end{equation*}
Noting that $-c(x_i, y)+h$ also in $\{ -c(x_t, y') + h' | -c(x_i, y')+h' \leq u_i, i=0,1 \}$, we obtain the equality \eqref{eqn: F=-c on segment}
\end{proof}

In fact, if we have \eqref{eqn: F=-c on segment} for any $X_i$, $y$ and $h$ satisfying the assumptions of Lemma \ref{lem: F=-c on segment}, then $c$ satisfies Loeper's property \eqref{eqn: Loeper}.

\begin{Lem}\label{lem: F=-c on segment implies Loeper}
Suppose that for any $X_i =(x_i, u_i) \in \X \times \R$ for $i = 0,1$, and for any $y \in \Y$ and $h \in \R$ satisfying $-c(x_i, y) + h = u_i$ for $i = 0,1$, we have \eqref{eqn: F=-c on segment} where $x_t \in \cseg{y}[x_0, x_1]$. Then $c$ satisfies Loeper's property \eqref{eqn: Loeper}.
\end{Lem}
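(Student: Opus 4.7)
The plan is to reverse-engineer Loeper's inequality directly from the given hypothesis by choosing $X_0, X_1$ so that the $c$-affine function $-c(\cdot, y_0)$ itself is the ``touching'' function on the $c$-segment, and then comparing against the competitor built from the other cost slice $-c(\cdot, y)$.

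Concretely, starting from the data for Loeper's property --- points $x_0, x_1 \in \X$, $y_0, y \in \Y$, and $x_t \in \cseg{y_0}[x_0, x_1]$ --- I would set $u_i := -c(x_i, y_0)$ and $X_i := (x_i, u_i)$. Then the pair $(y_0, h)$ with $h = 0$ satisfies $-c(x_i, y_0) + h = u_i$ for $i = 0,1$, so the hypothesis applies and gives
\begin{equation*}
\Fx(x_t) \;=\; -c(x_t, y_0).
\end{equation*}

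The next step is to produce a sharp lower bound on $\Fx(x_t)$ using a competitor based on $y$. Choose
\begin{equation*}
h' \;:=\; \min_{i \in \{0,1\}} \bigl( c(x_i, y) - c(x_i, y_0) \bigr),
\end{equation*}
so that $-c(x_i, y) + h' \leq -c(x_i, y_0) = u_i$ for both $i = 0, 1$. Then $(y, h')$ is an admissible pair in the supremum defining $\Fx(x_t)$ in \eqref{eqn: c-chord}, yielding $\Fx(x_t) \geq -c(x_t, y) + h'$. Combining with the equality above and rearranging produces
\begin{equation*}
-c(x_t, y) + c(x_t, y_0) \;\leq\; -h' \;=\; \max_{i \in \{0,1\}} \bigl( -c(x_i, y) + c(x_i, y_0) \bigr),
\end{equation*}
which is precisely \eqref{eqn: Loeper}.

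There is no real obstacle here --- once one realises that the hypothesis must be applied with $u_i$ chosen so that $-c(\cdot, y_0)$ is the distinguished touching function, the rest is just unwinding the definition of $\Fx$ as a supremum against a single carefully chosen competitor. The only subtle point to verify in writing this up cleanly is that the constraint set in \eqref{eqn: c-chord} is nonempty and that $h'$ lies in $\R$, both of which are immediate since $\X \times \Y$ is compact.
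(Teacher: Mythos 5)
Your proposal is correct and matches the paper's argument essentially verbatim: both set $u_i = -c(x_i, y_0)$ with $h=0$, invoke the hypothesis to get $\Fx(x_t) = -c(x_t, y_0)$, and lower-bound $\Fx(x_t)$ by the competitor $-c(\cdot,y) + h'$ with $h' = \min_i\{c(x_i,y) - c(x_i,y_0)\}$ before rearranging. No meaningful difference in route or level of detail.
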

\begin{proof}
Fix $y_0 \in \Y$, $x_0, x_1 \in \X$. Let $u_i = -c(x_i, y_0)$ for $i = 0, 1$, and let $x_t \in \cseg{y_0}[x_0,x_1]$. For any $y$, we define 
\begin{equation*}
h' = \min\{c(x_i, y)+u_i| i=0,1 \}.
\end{equation*}
Then $-c(x_i, y) +h' \leq u_i$ for $i = 0,1$. Therefore, by Definition \ref{def: c-chord}, we observe
\begin{equation*}
F_{X_0 X_1} (x) \geq -c(x,y) + h'. 
\end{equation*}
By the above inequality and the assumption \eqref{eqn: F=-c on segment}, we obtain
\begin{align*}
-c(x_t, y_0) + h &= \Fx(x_t) \\
&\geq -c(x_t,y) + h' = -c(x_t, y) + \min\{c(x_i, y)+u_i| i=0,1 \}.
\end{align*}
Multiplying $-1$ and rearranging the terms, we obtain
\begin{align*}
-c(x_t, y) + c(x_t, y_0)& \leq \max\{ -c(x_i, y) - u_i| i=0,1\}+h\\
& = \max\{ -c(x_i, y)+c(x_i, y_0)-h| i=0,1 \} +h \\
& = \max\{ -c(x_i, y)+c(x_i, y_0)| i=0,1 \},
\end{align*}
which is \eqref{eqn: Loeper}.
\end{proof}

Combining Lemma \ref{lem: F=-c on segment} and Proposition \ref{lem: F=-c on segment implies Loeper}, we obtain another expression of the MTW condition.

\begin{Prop}
The cost function $c$ satisfies Loeper's property \eqref{eqn: Loeper} if and only if, we have \eqref{eqn: F=-c on segment} for any $X_i = (x_i,u_i) \in \X \times \R$, $i=0,1$, $y \in \Y$, and $h \in \R$ that satisfy $-c(x_i, y) + h = u_i$ for $i = 0,1$.
\end{Prop}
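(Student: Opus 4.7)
The proposition is essentially a packaging of the two preceding lemmas into a single equivalence, so the plan is simply to invoke them in the two directions. First I would note that the forward implication, that Loeper's property implies the identity \eqref{eqn: F=-c on segment} whenever $-c(x_i,y)+h=u_i$ for $i=0,1$, is exactly the content of Lemma \ref{lem: F=-c on segment}. No further work is required for this direction beyond citing that lemma.

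For the reverse direction, that the validity of \eqref{eqn: F=-c on segment} (in the stated generality) implies Loeper's property, I would invoke Lemma \ref{lem: F=-c on segment implies Loeper}. Here one has to check that the hypotheses line up: the hypothesis of that lemma is precisely that \eqref{eqn: F=-c on segment} holds for every $X_0,X_1 \in \X \times \R$ and every $(y,h)$ with $-c(x_i,y)+h = u_i$ for $i=0,1$, which is what the proposition assumes. Its conclusion is Loeper's property \eqref{eqn: Loeper}.

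There is no genuine obstacle in the proof since both directions are already established; the only minor point is to observe that the class of triples $(X_0,X_1,y,h)$ quantified over in the proposition matches exactly what was quantified over in each lemma, so the biconditional is immediate. In practice the proof reduces to one or two sentences: \emph{The ``only if'' direction is Lemma \ref{lem: F=-c on segment}, while the ``if'' direction is Lemma \ref{lem: F=-c on segment implies Loeper}.}
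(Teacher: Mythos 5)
Your proof is correct and matches the paper's own argument exactly: both directions are handled by citing Lemma \ref{lem: F=-c on segment} and Lemma \ref{lem: F=-c on segment implies Loeper}, respectively. Nothing further is needed.
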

\begin{proof}
The straight direction is Lemma \ref{lem: F=-c on segment} and the opposite direction is Lemma \ref{lem: F=-c on segment implies Loeper}.
\end{proof}

Another implication of Loeper's property is strict comparison of $c$-chords on $c$-segments. In Lemma \ref{lem: ordered F}, we have $\Fxp(x) < \Fx(x)$ if $u'_i < u_i$ for $i=0,1$. In the next lemma, however, we only require one of the strict inequalities $u'_i < u_i$, and obtain the strict equality $\Fxp(x) < \Fx(x)$, but only in a $c$-segment.

\begin{Lem}\label{lem: strictly ordered F}
Suppose $c$ satisfies Loeper's property. Let $X_i = (x_i, u_i), X_i' = (x_i, u_i')$, for $x_i \in \X$, $u_i, u_i' \in \R$, $i = 0,1$. Also assume $\Fx(x_i) = u_i$ for $i = 0,1$. Suppose $u_0' = u_0$ and $u_1' < u_1$. Then, for any $y \in \Y$ and $h \in \R$ such that 
\begin{equation*}
	-c(x_i , y) +h = u_i,
\end{equation*}
we have 
\begin{equation*}
	\Fxp(x_t) < \Fx(x_t)
\end{equation*}
for any $x_t \in \cseg{y}(x_0, x_1)$.
\end{Lem}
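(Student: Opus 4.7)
The plan is to reduce the strict inequality to a pointwise statement for each admissible $(y', h')$ in the constraint set of $\Fxp$, namely $-c(x_t, y') + h' < -c(x_t, y) + h$ at every $x_t \in \cseg{y}(x_0, x_1)$. Since the supremum defining $\Fxp(x_t)$ is attained by compactness of $\Y$ together with the upper bound on $h'$ forced by the two constraints, this pointwise strict inequality will immediately give $\Fxp(x_t) < \Fx(x_t)$, where the identity $\Fx(x_t) = -c(x_t, y) + h$ on the segment comes from Lemma \ref{lem: F=-c on segment}.

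First I would fix an admissible $(y', h')$ with $-c(x_i, y') + h' \leq u_i'$ for $i = 0, 1$, and set $\zeta(\tau) := -c(x_\tau, y') + h' - (-c(x_\tau, y) + h)$ on $[0, 1]$. The endpoint estimates $\zeta(0) \leq u_0' - u_0 = 0$ and $\zeta(1) \leq u_1' - u_1 < 0$ are immediate. Applying Loeper's property on every sub-$c$-segment $\cseg{y}[x_{\tau_1}, x_{\tau_2}]$ of $\cseg{y}[x_0, x_1]$ shows that $\zeta$ is quasi-convex on $[0, 1]$, so $\zeta \leq 0$ everywhere. If $\zeta(0) < 0$ then $\zeta \leq \max(\zeta(0), \zeta(1)) < 0$ and the inequality is strict; so the nontrivial case is $\zeta(0) = 0$, i.e., $-c(\cdot, y') + h'$ already touches $-c(\cdot, y) + h$ at $x_0$.

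In this case I would suppose toward contradiction that $\zeta(\tau_0) = 0$ at some $\tau_0 \in (0, 1)$. Quasi-convexity of $\zeta$, combined with $\zeta \leq 0$, $\zeta(0) = \zeta(\tau_0) = 0$, and $\zeta(1) < 0$, forces $\zeta \equiv 0$ on $[0, \tau_0]$ (the open sub-level set $\{\zeta < 0\}$ is an interval containing $\tau = 1$ but avoiding both $0$ and $\tau_0$, hence contained in $(\tau_0, 1]$). Therefore the two $c$-affine functions $-c(\cdot, y') + h'$ and $-c(\cdot, y) + h$ coincide along the entire arc $\cseg{y}[x_0, x_{\tau_0}]$. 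I would then apply Lemma \ref{lem: F=-c on segment} to the auxiliary $c$-chord $G := F_{(x_0, u_0),\, (x_{\tau_0}, -c(x_{\tau_0}, y) + h)}$: both $(y, h)$ and $(y', h')$ touch $G$ at its two endpoints, giving two $c$-affine representations of $G$ on the (possibly distinct) arcs $\cseg{y}[x_0, x_{\tau_0}]$ and $\cseg{y'}[x_0, x_{\tau_0}]$. Propagating this tangency to $x_1$ along the complementary sub-segment $\cseg{y}[x_{\tau_0}, x_1]$ would yield $-c(x_1, y') + h' = -c(x_1, y) + h = u_1 > u_1'$, contradicting the admissibility of $(y', h')$.

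The main obstacle I expect is precisely this final propagation: extending the coincidence of the two $c$-affine functions from the sub-arc $\cseg{y}[x_0, x_{\tau_0}]$ to the full $c$-segment $\cseg{y}[x_0, x_1]$. In the Euclidean model cost this is just linear extension, but under a general Loeper cost it appears to require bi-twist and non-degeneracy beyond quasi-convexity alone. A plausible route is to replace $\tau_0$ by $T := \sup\{\tau \in [0,1] : \zeta \equiv 0 \text{ on } [0, \tau]\}$, necessarily $<1$ since $\zeta(1) < 0$, and then exploit the $C^2$ regularity of $\zeta$ at this transition point — where all left derivatives of $\zeta$ vanish — together with Loeper's inequality on a judiciously chosen sub-$c$-segment straddling $T$ to contradict $\zeta(1) < 0$.
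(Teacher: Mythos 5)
Your quasi-convexity reduction is a correct and clean reorganization: applying Loeper's property on sub-$c$-segments shows $\zeta(\tau) := -c(x_\tau, y') + h' + c(x_\tau, y) - h$ is quasi-convex on $[0,1]$ with $\zeta(0) \leq 0$ and $\zeta(1) < 0$, so the case $\zeta(0) < 0$ is immediate and, when $\zeta(0) = 0$, quasi-convexity forces $\zeta \equiv 0$ on $[0, \tau_0]$ for any interior zero $\tau_0$. But the gap you flag at the end is genuine, and the repair you sketch will not close it. Quasi-convexity plus $C^2$ smoothness of $\zeta$ along the single $c$-segment is consistent with the profile $\zeta(\tau) = -\max(0, \tau - T)^3$: identically zero on $[0,T]$, strictly negative on $(T,1]$, $C^2$, quasi-convex, with $\zeta'(T) = \zeta''(T) = 0$, and Loeper applied on any $[T-\epsilon, 1]$ again returns only $\zeta \leq 0$. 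Everything you can read off the one-dimensional profile is compatible with the degeneracy you need to exclude.

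What closes the gap in the paper is an $n$-dimensional argument about the section $\csec{f'}{f}$, where $f = -c(\cdot, y)+h$ and $f' = -c(\cdot, y')+h'$, rather than scalar analysis along the curve. Lemma \ref{lem: c-affine section c-convex} (whose proof uses only Loeper's property, hence applies to any two $c$-affine functions) makes $\left[\csec{f'}{f}\right]_y$ a convex subset of $\Rn$. The strict inequality $f'(x_1) \leq u_1' < u_1 = f(x_1)$ places $p_1$ in the interior of this convex set via Corollary \ref{lem: boundary and interior of c-affine section}, and convexity then sweeps a full-dimensional ball around $p_1$ to one around $p_\tau$ for each $\tau \in (0,1]$, so $p_\tau$ is interior. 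But a zero of $\zeta$ at $\tau_0$ means $f(x_{\tau_0}) = f'(x_{\tau_0})$, which by Lemma \ref{lem: c-affine section bdy} places $p_{\tau_0}$ on the boundary of $\left[\csec{f'}{f}\right]_y$ --- a contradiction. This interior/boundary dichotomy genuinely uses the ambient dimension and cannot be reconstructed from the scalar trace $\zeta$. The paper's proof assumes $\Fx(x_t) = \Fxp(x_t)$ for contradiction, extracts the maximizing $(y',h')$ by the same compactness argument you invoke, and then runs exactly this dichotomy at $p_t$. Keep your quasi-convexity reduction as the setup, but replace your final paragraph with the section argument rather than a smoothness-based repair.
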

\begin{proof}
Suppose $y \in \Y$ and $h \in \R$ satisfy $-c(x_i , y) +h = u_i$. Then by Lemma \ref{lem: F=-c on segment}, we have $\Fx(x_t) = -c(x_t,y)+h$ where $x_t \in \cseg{y}(x_0, x_1)$. Also, by Lemma \ref{lem: ordered F}, we have $\Fxp \leq \Fx$. To prove by contradiction, suppose $\Fxp(x_t) = \Fx(x_t)$ for some $x_t \in \cseg{y}(x_0,x_1)$. We claim that there exists $y' \in \Y$ and $h' \in \R$ such that
\begin{equation}\label{eqn: strictly ordered F: y' h'}
	-c(x_i, y') + h' \leq u_i', \quad -c(x_t, y') + h' = \Fxp(x_t).
\end{equation}
Let $y_k \in \Y$ and $h_k \in \R$ be sequences such that 
\begin{equation}\label{eqn: strictly ordered F: yk, hk}
	-c(x_i, y_k) + h_k \leq u_i', \quad \lim_{k \to \infty} -c(x_t,y_k) + h_k = \Fxp(x_t).
\end{equation}
Then, for big enough $k$, we have $|-c(x_t, y_k) + h_k -\Fxp(x_t) | \leq 1$. Since $\Fxp(x_t) < \infty$ thanks to Remark \ref{rmk: Lipschitz c-chord}, we obtain that for big enough $k$, 
\begin{equation*}
	\Fxp(x_t) +\min_{\X \times \Y} c -1 \leq h_k \leq \Fxp(x_t) + \max_{\X \times \Y} c + 1,
\end{equation*}
In particular, $h_k$ is a bounded sequence. Since $\Y$ is also compact, possibly passing to subsequences of $y_k$ and $h_k$ (not relabeled), we obtain
\begin{equation*}
	\lim_{k \to \infty} y_k = y' \in \Y, \quad \lim_{k \to \infty} h_k = h' \in \R.
\end{equation*} 
Then, taking limit $k \to \infty$ in \eqref{eqn: strictly ordered F: yk, hk}, we obtain \eqref{eqn: strictly ordered F: y' h'}, and the claim is proved. \\
Since we assumed $-c(x_1, y) + h = u_1$, we have
\begin{equation}\label{eqn: cy'+h'<cy+h at x1}
	-c(x_1, y') + h' \leq u_1' < u_1 = -c(x_1, y) + h
\end{equation}
Let 
\begin{equation*}
f(x) = -c(x,y)+h, \quad f'(x) = -c(x,y')+h'
\end{equation*}
Then $x_0, x_1 \in \csec{f'}{f}$ and Lemma \ref{lem: c-affine section bdy} implies $x_t \in \partial \csec{f'}{f}$. Moreover, Lemma \ref{lem: c-affine section c-convex} yields that $\csec{f'}{f}$ is convex with respect to $y$. Let $p_i = -D_y c(x_i, y)$. Then we have $p_t = -D_y c(x_t, y) = t p_1 + (1-t)p_0$ and Corollary \ref{lem: boundary and interior of c-affine section} shows $p_t \in \partial \csec{f'}{f}$. On the other hand, \eqref{eqn: cy'+h'<cy+h at x1} and Corollary \ref{lem: boundary and interior of c-affine section} imply $x_1 \in \Int{\csec{f'}{f}}$ which yields $p_1 \in \Int{\left[\csec{f'}{f}\right]_y}$. Then there exists $r >0$ such that $B_r (p_1) \subset \Int{\left[\csec{f'}{f}\right]_y}$, and the convexity of $\left[\csec{f'}{f}\right]_y$ implies
\begin{equation*}
	B_{tr}(p_t) = \{ tp + (1-t) p_0 | p \in B_r(p_1) \} \subset \left[\csec{f'}{f}\right]_y.
\end{equation*}
Therefore, we also have $p_t \in \Int{\left[\csec{f'}{f}\right]_y}$ that contradicts to  $p_t \in \partial \left[\csec{f'}{f}\right]_y$. Thus, we must have the strict inequality $\Fxp(x_t) < \Fx(x_t)$ for any $x_t \in \cseg{y}(x_0, x_1)$.
\end{proof}

\section{Alternative c-convex functions}\label{sec: alternative c-conv}
In this section, we define the alternative $c$-convex function using $c$-chord defined in Definition \ref{def: c-chord}. We study several properties of alternative $c$-convex function with or without Loeper's property. 

\subsection{Alternative $c$-convex functions}\label{subsec: Alternative c-conv}

We start with defining alternative $c$-convex function.

\begin{Def}\label{def: alternative c-conv}
Let $\phi:\X \to \R$. $\phi$ is called \emph{alternative $c$-convex} if for any $X_i = (x_i, \phi(x_i)), x_i \in \Dom{\phi}, i = 0,1$, we have
\begin{equation}\label{eqn: alt c conv}
\phi(x) \leq F_{X_0 X_1}(x), \forall x \in \X.
\end{equation}
\end{Def}

The alternative $c$-convex function is an analogy of the definition of a convex function using the following inequality, as explained in the introduction:
\begin{equation}\label{eqn: convex inequality}
	\phi(t x_1 + (1-t) x_0) \leq t \phi(x_1) + (1-t) \phi(x_0),
\end{equation}
or equivalently,
\begin{equation*}
    \phi(x) \leq \left\{ \begin{matrix}
        t\phi(x_1) + (1-t) \phi(x_0) & x = tx_1 + (1-t)x_0 \\
        \infty & \textrm{otherwise}
    \end{matrix}\right. .
\end{equation*}
Replacing the right hand side of the above inequality with the $c$-chord $\Fx$, we obtain the definition of the alternative $c$-convex function.
 
In equation \eqref{eqn: convex inequality}, we use a segment that 'connects' two points in the graph of $\phi$, but the $c$-chord $\Fx$ does not necessarily connect the two points $X_0$ and $X_1$. Still, if we pick two points in the graph of an alternative $c$-convex function, then the $c$-chord between the two points in the graph of the alternative $c$-convex function connects those two points.

\begin{Lem}\label{lem: F=phi at end pts}
Let $\phi :\X \to \R$ be an alternative $c$-convex function. Let $X_i = (x_i, \phi(x_i)) \in \graph{\phi}$, $i = 0,1$. Then 
\begin{equation*}
	\Fx(x_i) = \phi(x_i).
\end{equation*}
In other words, the $c$-chords between any two point in the graph of an alternative $c$-convex function connects the two points. 
\end{Lem}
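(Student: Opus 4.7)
The plan is to observe that this is essentially a definitional tautology, and the two inequalities $\phi(x_i)\leq\Fx(x_i)$ and $\Fx(x_i)\leq\phi(x_i)$ come from the two definitions being used.

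First I would get $\phi(x_i)\le\Fx(x_i)$ for free: since $\phi$ is alternative $c$-convex and $X_0,X_1$ are on its graph, inequality \eqref{eqn: alt c conv} from Definition \ref{def: alternative c-conv} holds for every $x\in\X$, in particular at $x=x_i$, which reads $\phi(x_i)\le\Fx(x_i)$.

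For the reverse inequality $\Fx(x_i)\le\phi(x_i)$, I would simply unwind the supremum in Definition \ref{def: c-chord}. With $u_j=\phi(x_j)$, the admissible pairs $(y,h)$ in \eqref{eqn: c-chord} are precisely those satisfying $-c(x_j,y)+h\le\phi(x_j)$ for $j=0,1$. Evaluating the objective $-c(x,y)+h$ at $x=x_i$ gives a quantity which, by the very constraint at index $i$, is bounded above by $\phi(x_i)$. Taking the supremum over all admissible $(y,h)$ preserves this bound, so $\Fx(x_i)\le\phi(x_i)$.

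Combining the two inequalities yields $\Fx(x_i)=\phi(x_i)$, which is the claim, and hence (by the definition in Definition \ref{def: c-chord}) $\Fx$ is a $c$-chord connecting $X_0$ and $X_1$. There is no real obstacle here; the entire content of the lemma is the compatibility between the hypothesis that $\phi$ lies below $\Fx$ everywhere (from alternative $c$-convexity) and the fact that $\Fx$ is constructed to lie below the prescribed values $u_i$ at $x_i$ (from its defining supremum).
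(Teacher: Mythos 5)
Your proposal is correct and matches the paper's proof exactly: the lower bound $\phi(x_i)\le\Fx(x_i)$ comes from evaluating the alternative $c$-convexity inequality at $x=x_i$, and the upper bound comes from noting that every admissible $(y,h)$ in the supremum defining $\Fx$ already satisfies $-c(x_i,y)+h\le\phi(x_i)$ by the constraint at index $i$.
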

\begin{proof}
By the definition of alternative $c$-convexity, we have $\phi(x_i) \leq \Fx(x_i)$. On the other hand, by the definition of $\Fx$ and our choice of $X_i$, $i = 0,1$, we obtain 
\begin{align*}
	\Fx(x_i) & = \sup \{ -c(x_i, y) + h | y \in \Y, h \in \R, -c(x_j, y) + h \leq \phi(x_j), j = 0,1 \} \\
	& \leq \phi(x_i).
\end{align*}
Therefore, we obtain the equality.
\end{proof}

Theorem \ref{thm: main theorem} states that the alternative $c$-convex function and the $c$-convex function are equivalent definition if and only if the cost function $c$ satisfies Loeper's property (or other equivalent MTW condition). However, from the definitions of alternative $c$-convex functions and $c$-convex functions, we can observe that the $c$-convex functions are alternative $c$-convex functions even if we do not assume Loeper's property.

\begin{Lem}\label{lem: c-conv is alt c-conv}
A $c$-convex function $\phi$ is alternative $c$-convex. 
\end{Lem}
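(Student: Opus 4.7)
The plan is to unwind both definitions and observe that any $c$-affine minorant of $\phi$ coming from the envelope representation automatically satisfies the admissibility constraints in the supremum defining $F_{X_0 X_1}$. Concretely, write
\begin{equation*}
\phi(x) = \sup_{y \in \Y} \{ -c(x,y) + \psi(y) \}
\end{equation*}
for the $\psi$ provided by Definition \ref{def: c-affine c-conv}, fix $X_i = (x_i, \phi(x_i))$ for $i = 0,1$ and an arbitrary $x \in \X$, and let $\epsilon > 0$. By the sup, choose $y_\epsilon \in \Y$ with
\begin{equation*}
-c(x, y_\epsilon) + \psi(y_\epsilon) \geq \phi(x) - \epsilon.
\end{equation*}

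The key observation is that the pair $(y, h) = (y_\epsilon, \psi(y_\epsilon))$ is admissible in the supremum defining $F_{X_0 X_1}(x)$: indeed, for $i = 0,1$,
\begin{equation*}
-c(x_i, y_\epsilon) + \psi(y_\epsilon) \leq \sup_{y \in \Y}\{-c(x_i, y) + \psi(y)\} = \phi(x_i),
\end{equation*}
which is exactly the constraint in \eqref{eqn: c-chord}. Therefore
\begin{equation*}
F_{X_0 X_1}(x) \geq -c(x, y_\epsilon) + \psi(y_\epsilon) \geq \phi(x) - \epsilon.
\end{equation*}
Letting $\epsilon \to 0$ yields $\phi(x) \leq F_{X_0 X_1}(x)$, which is \eqref{eqn: alt c conv}.

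There is essentially no obstacle here: the proof is a one-line consequence of the fact that every $c$-affine function in the envelope defining $\phi$ lies below $\phi$ at every point, in particular at $x_0$ and $x_1$, so it automatically competes in the supremum defining the $c$-chord. Notably, Loeper's property is not needed for this direction, consistent with the remark preceding the lemma.
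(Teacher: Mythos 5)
Your proof is correct and rests on the same key observation as the paper's: any $c$-affine function occurring in the envelope representation of $\phi$ automatically satisfies the constraints $-c(x_i,y)+h \leq \phi(x_i)$ and therefore competes in the supremum defining $F_{X_0 X_1}$. The only substantive difference is technical: the paper asserts the existence of an exact supporting $c$-affine function at each $x'$ (i.e., that the supremum $\sup_{y}\{-c(x',y)+\psi(y)\}$ is attained), which strictly speaking requires replacing $\psi$ by its $c$-transform or some upper-semicontinuity argument, whereas you work with $\epsilon$-approximate maximizers $y_\epsilon$ and pass to the limit. Your version is marginally more elementary and sidesteps that attainment question, but the underlying argument is the same.
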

\begin{proof}
Let $x_i \in \X$ and $X_i = (x_i, \phi(x_i))$, $i = 0,1$ and Let $x' \in \X$. Then, by $c$-convexity of $\phi$, there exists $y \in \Y$ such that 
\begin{equation*}
	\phi(x) \geq -c(x,y) +c(x',y) + \phi(x'), \quad \forall x \in \X.
\end{equation*}
In particular, the above inequality holds with $x = x_0$ and $x = x_1$. Hence by Definition \ref{def: c-chord},
\begin{equation*}
	F_{X_0 X_1}(x) \geq -c(x,y) +c(x',y) + \phi(x'), \quad \forall x \in \X
\end{equation*}
Evaluating the above inequality at $x = x'$, we obtain $F_{X_0 X_1} (x') \geq \phi(x')$ for any $x' \in \X$. Since $X_0$ and $X_1$ were arbitrary points in $\graph{\phi}$, $\phi$ is an alternative $c$-convex function.
\end{proof}

Alternative $c$-convex functions are also Lipschitz with Lipschitz constant $\clip$.

\begin{Lem}\label{lem: Lipschitz alt. c-conv}
Alternative $c$-convex functions are Lipschitz with Lipschitz constant $\clip$.
\end{Lem}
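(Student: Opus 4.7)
The plan is to reduce the Lipschitz estimate for $\phi$ to the already-established Lipschitz estimate for $c$-chords. Given two arbitrary points $x_0, x_1 \in \X$ in the domain of $\phi$, I would form $X_i = (x_i, \phi(x_i))$, $i=0,1$, and consider the $c$-chord $F_{X_0 X_1}$ between them.

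The key observation is that Lemma \ref{lem: F=phi at end pts} guarantees
\begin{equation*}
F_{X_0 X_1}(x_i) = \phi(x_i), \quad i = 0, 1,
\end{equation*}
because $\phi$ is alternative $c$-convex. Hence the difference $\phi(x_1) - \phi(x_0)$ coincides exactly with $F_{X_0 X_1}(x_1) - F_{X_0 X_1}(x_0)$.

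Now $F_{X_0 X_1}$ is a $c$-convex function by Definition \ref{def: c-chord}, and Remark \ref{rmk: Lipschitz c-chord} (which in turn relies on Lemma \ref{lem: Lipschitz c-conv}) says that every $c$-chord is Lipschitz with constant $\clip$. Applying this to $F_{X_0 X_1}$, we get
\begin{equation*}
| \phi(x_1) - \phi(x_0) | = | F_{X_0 X_1}(x_1) - F_{X_0 X_1}(x_0) | \leq \clip \| x_1 - x_0 \|.
\end{equation*}
Since $x_0, x_1$ were arbitrary, this yields the desired Lipschitz bound on $\phi$.

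There is no real obstacle here: the entire argument is a one-line reduction to Lemma \ref{lem: F=phi at end pts} and the Lipschitz regularity of $c$-convex functions. The only point worth double-checking is that Lemma \ref{lem: F=phi at end pts} does not assume Loeper's property, so the proof remains valid in the general setting of this subsection.
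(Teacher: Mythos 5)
Your proof is correct, and it takes a slightly different route from the paper's. Both arguments begin identically: form $X_i = (x_i,\phi(x_i))$ and invoke Lemma \ref{lem: F=phi at end pts} to get $\Fx(x_i) = \phi(x_i)$, which requires no Loeper's property, as you rightly note. From there the paper invokes Lemma \ref{lem: existence of touching c} to produce a single $c$-affine function $-c(\cdot,y)+h$ passing through both $(x_0,\phi(x_0))$ and $(x_1,\phi(x_1))$, and then bounds $|\phi(x_1)-\phi(x_0)| = |-c(x_1,y)+c(x_0,y)| \le \clip\|x_1-x_0\|$ directly from the Lipschitz constant of $c$. You instead observe that $\Fx$ is itself a $c$-convex function (Remark \ref{rmk: Lipschitz c-chord}, which rests on Lemma \ref{lem: Lipschitz c-conv}) and so is $\clip$-Lipschitz, and then transfer that bound to $\phi$ via the endpoint equalities. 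Your route bypasses Lemma \ref{lem: existence of touching c} entirely and is marginally shorter; the paper's route is more self-contained in the sense that it produces an explicit $c$-affine function realizing the bound, which is a fact the paper reuses elsewhere. Both are valid, and the difference is cosmetic.
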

\begin{proof}
Let $\phi: \X \to \R$ be an alternative $c$-convex function and let $x_0, x_1 \in \X$. Then we have
\begin{equation*}
	\phi(x) \leq \Fx(x), \quad \forall x \in \X,
\end{equation*}
where $X_i = (x_i, \phi(x_i))$. Lemma \ref{lem: F=phi at end pts} implies $\Fx(x_i) = \phi(x_i)$ and Lemma \ref{lem: existence of touching c} shows that there exist $y \in \Y$ and $h \in \R$ such that 
\begin{equation*}
	-c(x_i,y) + h = \Fx(x_i), \quad i =0,1.
\end{equation*}
Then we observe
\begin{align*}
	|\phi(x_0) - \phi(x_1)| = | -c(x_0,y) +c(x_1, y) | \leq \clip \|x_1 - x_0\|.
\end{align*}
\end{proof}

We will also be using sections of alternative $c$-convex functions in the proof of the main theorem. Under Loeper's property, we can generalize Lemma \ref{lem: section non empty interior} and \ref{lem: c-affine section c-convex} to the alternative $c$-convex function case.

\begin{Lem}\label{lem: convex section}
Let $c$ satisfy Loeper's property and let $\phi: \X \to \R$ be an alternative $c$-convex function. Fix $y \in \Y$ and $h \in \R$, and let
\begin{equation*}
	f(x) = -c(x,y) + h.
\end{equation*}
Then the section $\csec{\phi}{f}$ is $c$-convex with respect to $y$. Moreover, if $\phi(x') < f(x')$ for some $x' \in \X$, then $\csec{\phi}{f}$ has a non-empty interior, and 
\begin{equation}\label{eqn: < in Int section}
	\Int{\csec{\phi}{f}} \subset \{ x \in \X | \phi(x) < f(x) \}.
\end{equation}
\end{Lem}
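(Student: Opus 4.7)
The plan is to establish the three conclusions in order---$c$-convexity of the section, non-emptiness of its interior, and the strict-inequality description of the interior---with the last being the main obstacle. For the $c$-convexity of $\csec{\phi}{f}$ with respect to $y$, I fix $x_0, x_1 \in \csec{\phi}{f}$ and $x_t \in \cseg{y}[x_0, x_1]$ and show $\phi(x_t) \leq f(x_t)$. Setting $X_i = (x_i, \phi(x_i))$, alternative $c$-convexity gives $\phi(x_t) \leq \Fx(x_t)$, so it suffices to bound $\Fx(x_t) \leq f(x_t)$. Every competitor $(y', h')$ in the supremum defining $\Fx$ satisfies $-c(x_i, y') + h' \leq \phi(x_i) \leq f(x_i) = -c(x_i, y) + h$, which rearranges to $-c(x_i, y') + c(x_i, y) \leq h - h'$ for $i = 0, 1$. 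Loeper's property applied along the $c$-segment with respect to $y$ propagates this to $x_t$, giving $-c(x_t, y') + h' \leq f(x_t)$; taking the supremum yields $\Fx(x_t) \leq f(x_t)$.

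For non-emptiness of the interior, I use that $\phi$ (by Lemma \ref{lem: Lipschitz alt. c-conv}) and $f$ are Lipschitz, so $\{\phi < f\}$ is relatively open in $\X$ and contains $x'$. Since $\X$ is $c$-convex with respect to $\Y$, the convex set $[\X]_y$ has non-empty interior (hence its interior is dense in it), and the bi-Lipschitz map $\cexp{y}$ transports this density to $\X$, making $\Int{\X}$ dense in $\X$. Picking $x'' \in \{\phi < f\} \cap \Int{\X}$, a small Euclidean ball about $x''$ lies in both $\Int{\X}$ and $\{\phi < f\}$, hence in $\csec{\phi}{f}$.

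The inclusion $\Int{\csec{\phi}{f}} \subset \{\phi < f\}$ is the main obstacle, and I argue by contradiction. Suppose $x \in \Int{\csec{\phi}{f}}$ with $\phi(x) = f(x)$. By the first assertion $[\csec{\phi}{f}]_y$ is convex, and the bi-Lipschitz image under $-D_y c(\cdot, y)$ of a Euclidean ball $B_r(x) \subset \csec{\phi}{f}$ places $p := -D_y c(x, y)$ in its interior. Writing $p' := -D_y c(x', y)$ for the hypothesized $x'$ with $\phi(x') < f(x')$, the point $p_{-\epsilon} := p - \epsilon(p' - p)$ stays in $[\csec{\phi}{f}]_y$ for small $\epsilon > 0$, so $x_{-\epsilon} := \cexp{y}{p_{-\epsilon}}$ lies in $\csec{\phi}{f}$. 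The $c$-segment with respect to $y$ from $x_{-\epsilon}$ to $x'$ then passes through $x$ at parameter $s = \epsilon/(1+\epsilon) \in (0,1)$, and the $c$-affine function $f$ itself interpolates the endpoint values $f(x_{-\epsilon})$ and $f(x')$ along it. A direct adaptation of the argument of Lemma \ref{lem: strictly ordered F}---whose proof only uses $u_0' \leq u_0$ together with the strict inequality $u_1' < u_1$ to place one endpoint strictly in the $c$-affine section $\csec{f'}{f}$, after which the convex geometry argument applies verbatim---applied to $X_i = (x_i, f(x_i))$ and $X_i' = (x_i, \phi(x_i))$ (with $i$ indexing the endpoints $x_{-\epsilon}, x'$) yields $\Fxp(x) < f(x)$. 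Combined with $\phi(x) \leq \Fxp(x)$ from alternative $c$-convexity, this contradicts $\phi(x) = f(x)$. The delicate point is that the obvious Lemma \ref{lem: strictly ordered F} setup does not literally fit (we only know $u_0' \leq u_0$, not equality), so one must observe that its proof never truly needs the equality.
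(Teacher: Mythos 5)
Your proof is correct and runs parallel to the paper's, with two noteworthy variations. For the $c$-convexity of the section you inline the competitor argument of Lemma \ref{lem: F=-c on segment} instead of citing it via the intermediate $c$-chord $\Fx$ built from the $f$-values $(x_i, f(x_i))$; the content is the same. For the interior inclusion \eqref{eqn: < in Int section}, the paper picks $x'' \in \Int{\csec{\phi}{f}}$ and produces a \emph{boundary} point $x_1 \in \partial\csec{\phi}{f}$ with $x'' \in \cseg{y}(x', x_1)$, then invokes Lemma \ref{lem: ordered F} and Lemma \ref{lem: strictly ordered F}; you instead step a small amount $\epsilon$ past the candidate point $x$ in the direction away from $x'$, staying inside the section by openness, and run the segment from $x_{-\epsilon}$ to $x'$. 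Your choice is arguably cleaner, since it avoids having to argue that the ray from $p'$ through the interior point meets $\partial[\csec{\phi}{f}]_y$ inside $[\X]_y$. In both versions, the hypotheses of Lemma \ref{lem: strictly ordered F} as stated ($u_0' = u_0$, $u_1' < u_1$) are not literally satisfied: one endpoint carries a strict inequality and the other only a non-strict one. You correctly flag this and observe that the proof of Lemma \ref{lem: strictly ordered F} never uses the equality $u_0' = u_0$, only $u_0' \leq u_0$---the role of $u_0$ is merely to put $x_0 \in \csec{f'}{f}$, while the strict inequality at $x_1$ is what places $p_1$ in the interior. The paper's own application implicitly relies on the same relaxation without stating it, so your explicit note is a genuine improvement in rigor. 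The non-emptiness of the interior also benefits from your extra care: the paper's $B_r(x') \subset \csec{\phi}{f}$ is only correct if $x' \in \Int{\X}$, and your density argument via $\Int{\X}$ closes that small gap.
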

\begin{proof}
Let $x_0, x_1 \in \csec{\phi}{f}$. To show the $c$-convexity, we only need to show that the $c$-segment $\cseg{y}[x_0, x_1]$ is in $\csec{\phi}{f}$. Let $u_i = -c(x_i, y) + h$ and
\begin{equation*}
	X_i = (x_i, u_i), \quad X_i' = (x_i, \phi(x_i)),
\end{equation*}
for $i = 0,1$. $x_i \in \csec{\phi}{f}$ implies $\phi(x_i) \leq u_i$ and Lemma \ref{lem: ordered F} implies $\Fxp \leq \Fx$. Then the alternative $c$-convexity of $\phi$ implies that we have
\begin{equation*}
	\phi \leq \Fxp \leq \Fx.
\end{equation*}
Let $x_t \in \cseg{y}[x_0, x_1]$. Lemma \ref{lem: F=-c on segment} implies that $\Fx(x_t) = -c(x_t, y) + h$, and we obtain
\begin{equation*}
	\phi(x_t) \leq \Fx(x_t) = -c(x_t, y) + h.
\end{equation*}
Hence $x_t \in \csec{\phi}{f}$.\\
Next, suppose $\phi(x') < -c(x',y) + h$ for some $x' \in \X$. Since $-c(\cdot,y)+h$ is continuous and $\phi$ is also continuous by Lemma \ref{lem: Lipschitz alt. c-conv}, there exists $r>0$ such that 
\begin{equation*}
	\phi(x) < -c(x,y)+h, \quad \forall x \in B_r (x').
\end{equation*} 
Then, by Definition \ref{def: section}, $B_r(x') \subset \csec{\phi}{f}$ and $\Int{\csec{\phi}{f}} \neq \emptyset$. Also, for any $x'' \in \Int{\csec{\phi}{f}}\setminus\{x'\}$, the $c$-convexity of $\csec{\phi}{f}$ with respect to $y$ and the compactness of $\X$ imply that there exists $x_1 \in \partial \csec{\phi}{f}$ such that 
\begin{equation*}
-D_y c(x'', y) = -tD_x c(x_1, y) - (1-t) D_x c(x', y),
\end{equation*}
for some $t \in (0,1)$. In other words, $x'' \in \cseg{y}(x', x_1)$. Using Lemma \ref{lem: ordered F} when $\phi(x_1)< -c(x_1,y)+h$ and Lemma \ref{lem: strictly ordered F} when $\phi(x_1)=-c(x_1,y)+h$ with $x_0 = x'$, $X_i =(x_i, -c(x_i,y)+h)$ and $X_i' = (x_i, \phi(x_i))$, we obtain $\phi(x'') < \Fx(x'')$. Then Lemma \ref{lem: F=-c on segment} shows that we have
\begin{equation*}
\phi(x'') < -c(x'',y)+c(x',y)+h = f(x'').
\end{equation*}
Since $x'' \in \Int{\csec{\phi}{f}}$ was arbitrary, we obtain the desired inclusion \eqref{eqn: < in Int section}.
\end{proof}

\section{Proof of the main theorem}\label{sec: proof}
We prove the main theorem in this section. In subsection \ref{subsec: fisrt part}, we prove the first part of the main theorem. In subsection \ref{subsec: second part}, we show a few lemmas for the second part of the main theorem and prove the second part of the main theorem.

\subsection{The first part of the main theorem}\label{subsec: fisrt part}

Before we state prove the first part of the main theorem, we prove a simple lemma about the normal cones of a convex body.

\begin{Lem}\label{lem: conv set in normal cones}
    Let $A \in \Rn$ be a compact convex set with non-empty interior. Let $p_0 \in \partial A$ and let $S \subset \Rn$ be another convex set with non-empty interior. If $S \subset \N(A;p_0) \cup -\N(A;p_0)$, then we have
    \begin{equation*}
        S \subset \N(A;p_0) \quad \textrm{or} \quad S \subset -\N(A;p_0).
    \end{equation*}
\end{Lem}
\begin{proof}
    Since we assume that $A$ has non-empty interior, there exists $p_1 \in A$ and $r>0$ such that $B_r(p_1) \subset A$. Then we observe
    \begin{equation*}
        \N(A;p) = -\K^*(A-p) \subset -K^*(B_r(p_1)-p_0). 
    \end{equation*}
    If $v\in\Rn$ is perpendicular to $p_1 -p_0$: $\langle v, p_1 - p_0 \rangle =0$, then we observe that, for $\epsilon> 0$ such that $\epsilon \| v \| < r$, we have $p_1 + \epsilon v \in B_r(p_1)$, but
    \begin{equation*}
        \begin{aligned}
            & \langle p_1 + \epsilon v - p_0, v \rangle = \epsilon \| v \|^2 >0, \\
            & \langle p_1 - \epsilon v - p_0, v \rangle = - \epsilon \| v \|^2 < 0,
        \end{aligned}
    \end{equation*}
    so that $v \not \in -K^*(B_r(p_1)-p_0)$ and $v \not\in K^*(B_r(p_1)-p_0)$. In particular, we have $v \not\in \N(A;p_0)$ and $v \not\in -\N(A;p_0)$. \\
    Now, note that we assumed that $S$ has non-empty interior and $S \subset \N(A;p_0) \cup -\N(A;p_0)$. If $\Int{S} \subset \N(A;p_0)$, then the closedness of $\N(A;p_0)$ yields $S \subset \N(A;p_0)$, and similarly $\Int{S} \subset -\N(A;p_0)$ implies $S \subset -\N(A;p_0)$. On the other hand, if we have that $\Int{S}$ intersect with both $\N(A;p_0)$ and $-\N(A;p_0)$, then we can find $p_- , p_+ \in \Int{S}$ and $\delta>0$ such that $ B_{\delta}(w_-) \subset \Int{S} \cap \N(A;p_0)$ and $ B_{\delta}(w_+) \subset \Int{S} \cap -\N(A, p_0)$. Let 
    \begin{equation*}
        \begin{aligned}
            w_+ =  a_+ (p_1 - p_0) + b_+ v_+, \\
            w_- =  a_- (p_1 - p_0) + b_- v_-,
        \end{aligned}
    \end{equation*}
    where $v_+$ and $v_-$ are perpendicular to $p_1 - p_0$. Since $p_1 \in \Int A$, the definition of $\N(A;p_0)$ yields
    \begin{equation*}
        \begin{aligned}
            \langle w_+  , p_1 - p_0 \rangle = a_+ \| p_1 - p_0\|^2 <0, \\
            \langle w_- , p_1 - p_0 \rangle = a_- \| p_1 - p_0\|^2 > 0.
        \end{aligned}
    \end{equation*}
    Therefore, $a_+ >0$ and $a_- < 0$. Let $t = a_+ / (a_+ - a_-) \in (0,1)$. Then the convexity of $S$ implies that, for any $\rho \in (-\delta, \delta)$, we have
    \begin{equation*}
        (1-t)(w_+  + \rho v_+)+ tw_- = (b_+ + \rho)v_+ + b_- v_- \in S.
    \end{equation*}
    Hence, choosing $\rho \in (-\delta, \delta)$ such that $v=(b_+ + \rho)v_+ + b_- v_- \neq 0 $, we obtain $ v \in S \in \N(A;p_0) \cup -\N(A;p_0)$, which is impossible since $v$ is perpendicular to $p_1 - p_0$. 
\end{proof}

Now we are ready to prove the first part of Theorem \ref{thm: main theorem}.

\begin{Thm}\label{thm: main 1}
    Let $c$ satisfy Loeper's property. Then a function $\phi : \X \to \R$ is $c$-convex if and only if $\phi$ is alternative $c$-convex.
\end{Thm}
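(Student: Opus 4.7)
The forward implication ($c$-convex $\Rightarrow$ alternative $c$-convex) is covered by Lemma \ref{lem: c-conv is alt c-conv}, whose proof does not use Loeper's property; so only the converse needs work. The plan is to show that every alternative $c$-convex $\phi$ coincides with its $c$-convex envelope
\[
\phi^{cc}(x) := \sup\{\,g(x) : g(x) = -c(x,y)+h,\ g \leq \phi \text{ on } \X\,\}.
\]
Since $\phi^{cc}$ is a supremum of $c$-affine functions, it is automatically $c$-convex, and trivially $\phi^{cc} \leq \phi$; the content reduces to ruling out any $x_0 \in \X$ with $\phi^{cc}(x_0) < \phi(x_0)$.

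Assume such an $x_0$ exists. Let $g$ be a $c$-affine support of $\phi^{cc}$ at $x_0$, so $g \leq \phi$ and $g(x_0) = \phi^{cc}(x_0) < \phi(x_0)$, and set $t^\ast := \sup\{t \geq 0 : g+t \leq \phi\}$. The contradiction hypothesis forces $t^\ast < \phi(x_0) - g(x_0)$, because $t^\ast = \phi(x_0) - g(x_0)$ would make $g + t^\ast$ a $c$-affine support of $\phi$ itself at $x_0$. By continuity and compactness, $g^\ast := g + t^\ast$ satisfies $g^\ast \leq \phi$ and $g^\ast(x^\ast) = \phi(x^\ast)$ for some $x^\ast \neq x_0$. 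Alternative $c$-convexity with Lemma \ref{lem: F=phi at end pts} gives $F_{X_0 X^\ast}(x_i) = \phi(x_i)$, so Lemma \ref{lem: existence of touching c} produces a $c$-affine $f(x) = -c(x,y)+h$ with $f(x_0) = \phi(x_0)$, $f(x^\ast) = \phi(x^\ast)$. Loeper's property, via Lemma \ref{lem: F=-c on segment}, identifies $F_{X_0 X^\ast}$ with $f$ on the $c$-segment $\cseg{y}[x_0, x^\ast]$, so alternative $c$-convexity delivers $\phi \leq f$ there, and Lemma \ref{lem: strictly ordered F} applied to the lowered endpoint $(x_0, g^\ast(x_0))$ yields the strict comparison $g^\ast < f$ on the open segment $\cseg{y}(x_0, x^\ast)$.

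The contradiction should come from converting this strict gap into a better $c$-affine candidate. Specifically, I would build $\tilde g$ by rotating the $y$-parameter of $g^\ast$ slightly toward $y$, keeping $\tilde g(x^\ast) = \phi(x^\ast)$; the gap $g^\ast < f$ on $\cseg{y}(x_0, x^\ast)$ ensures $\tilde g \leq f$ along the $c$-segment for small rotations, hence $\tilde g \leq \phi$ there, while raising the value at $x_0$ above $g^\ast(x_0)$. If such $\tilde g$ can be arranged to satisfy $\tilde g \leq \phi$ on all of $\X$, we obtain $\phi^{cc}(x_0) \geq \tilde g(x_0) > g^\ast(x_0) = \phi^{cc}(x_0) + t^\ast$, the desired contradiction. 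The main obstacle is precisely this global bound: away from $\cseg{y}[x_0, x^\ast]$ the function $g^\ast$ may already saturate $\phi$ at $x^\ast$, so an arbitrary rotation could push $\tilde g$ above $\phi$ elsewhere. I would confine the admissible rotations using Lemma \ref{lem: convex section} (sections of $\phi$ with respect to $c$-affine functions are $c$-convex under Loeper) together with the quantitative cone inclusion of Lemma \ref{lem: cone in c-affine section}, which pins down the geometry of the contact set near $x^\ast$. Turning this geometric picture into a precise quantitative perturbation is where I anticipate the bulk of the technical work.
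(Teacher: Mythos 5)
The forward direction is indeed Lemma~\ref{lem: c-conv is alt c-conv}, so the converse is where the work is, but your argument for the converse contains a concrete error before it even reaches the difficulty you flag. After producing the $c$-affine function $f(x)=-c(x,y)+h$ with $f(x_0)=\phi(x_0)$ and $f(x^\ast)=\phi(x^\ast)$ and invoking Lemma~\ref{lem: F=-c on segment}, the inequality that alternative $c$-convexity gives you on the $c$-segment $\cseg{y}[x_0,x^\ast]$ is $\phi\leq F_{X_0X^\ast}=f$, \emph{not} $f\leq\phi$. Consequently the step ``the gap $g^\ast<f$ on $\cseg{y}(x_0,x^\ast)$ ensures $\tilde g\leq f$ along the $c$-segment $\dots$, hence $\tilde g\leq\phi$ there'' is a non sequitur: $\tilde g\leq f$ and $\phi\leq f$ are two upper bounds by the same $c$-affine function and say nothing about the order of $\tilde g$ and $\phi$. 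So even the portion of the argument you present as under control is not, and the ``main obstacle'' you identify (controlling $\tilde g$ away from the segment) is then the entire proof, with no mechanism supplied.

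It is also worth saying that the paper takes a genuinely different route, and its choice of route is what provides the missing mechanism. Rather than introducing the envelope $\phi^{cc}$ and a touching point $x^\ast$, the paper fixes an arbitrary differentiability point $x_0\in\Int{\X}$ of $\phi$ (a.e.\ point, by Lemma~\ref{lem: Lipschitz alt. c-conv}), uses a projection argument plus Lemmas~\ref{lem: F=phi at end pts} and~\ref{lem: existence of touching c} to show $D_x\phi(x_0)=-D_xc(x_0,y_0)$ for some $y_0\in\Y$, and then shows that the candidate $f(x)=-c(x,y_0)+c(x_0,y_0)+\phi(x_0)$ supports $\phi$ globally. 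The hard part is exactly this global support, and the engine is not a ``rotation'' but a characterization of the section: one proves
\begin{equation*}
\left[\csec{\phi}{f}\right]_{y_0}=\left(M_0\,\N([\Y]_{x_0};q_0)+p_0\right)\cap[\X]_{y_0},
\end{equation*}
using both inclusions via Lemma~\ref{lem: convex section}, Lemma~\ref{lem: F=-c on segment}, and the intermediate-value trick; then one moves along a $c$-segment into the interior of this cone and derives a contradiction with the exposedness of $q_0=-D_xc(x_0,y_0)$ in $[\Y]_{x_0}$. The final envelope step (your $\phi^{cc}$) is then routine. In your approach you never locate a differentiability point or pin down the direction $q_0$, so you do not have the normal cone $\N([\Y]_{x_0};q_0)$ that makes the contradiction work; without it, there is no reason a small perturbation of $g^\ast$ stays under $\phi$ globally, and indeed it generally will not.
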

\begin{proof}
Thanks to Lemma \ref{lem: c-conv is alt c-conv}, we only need to show that any alternative $c$-convex function is a $c$-convex function. We divide the proof into four steps.\\

\noindent\emph{Step 1)} Let $\phi :\X \to \R$ be an alternative $c$-convex function. Then Lemma \ref{lem: Lipschitz alt. c-conv} yields that $\phi$ is differentiable almost everywhere in $\X$. Let $x_0 \in \Int{\X}$ be a differentiability point of $\phi$. \\
We show that there exists $y_0 \in \Y$ such that $D_x \phi(x_0) = -D_x c(x_0, y_0)$ in this step. Suppose, to prove by contradiction, $D_x \phi(x_0) \not\in [\Y]_{x_0}$. Let $q' \in[\Y]_{x_0}$ be the point that is closest to $D_x \phi(x_0)$:
\begin{equation*}
	\inf_{q \in [\Y]_{x_0}} \| D_x \phi(x_0) - q \| = \| D_x \phi(x_0) - q' \| > 0.
\end{equation*}
Let $v = D_x \phi(x_0) - q'$. Then $v$ is an outward normal vector of the convex set $[\Y]_{x_0}$ at $q'$. In particular, we have
\begin{equation}\label{eqn: v outward normal}
	\langle q-q', v \rangle \leq 0
\end{equation}
for any $q \in [\Y]_{x_0}$. On the other hand, since $x_0 \in \Int{\X}$, there exists $\epsilon>0$ such that $x_0 + t v \in \X$ for any $t \in (-\epsilon, \epsilon)$. Then, using Lemma \ref{lem: F=phi at end pts} with Lemma \ref{lem: existence of touching c} with $x_0 + tv$ instead of $x_1$ in the lemmas yields that there exist $y_t \in \Y$ and $h_t \in \R$ such that 
\begin{equation*}
	-c(x_0 , y_t) + h_t = \phi(x_0), \quad -c(x_0 + tv, y_t) + h_t = \phi(x_0+tv).
\end{equation*}
Then we observe
\begin{equation}\label{eqn: diff quo of phi and c}
	\frac{\phi(x_0+tv) - \phi(x_0)}{t} = \frac{-c(x_0+tv,y_t)+c(x_0,y_t)}{t} = \langle -D_x c(x_0, y_t),v\rangle +\mathrm{Err}( t ).
\end{equation} 
We can estimate $\mathrm{Err}$ as follows:
\begin{align*}
	|\mathrm{Err}(t)| & = \left| \frac{-c(x_0+tv,y_t)+c(x_0,y_t)}{t} - \langle -D_x c(x_0, y_t),v\rangle \right| \\
	& = |\langle -D_x c(x_0 + sv,y_t), v \rangle - \langle -D_x c(x_0, y_t),v\rangle| \\
	& = |\langle -D_x c(x_0 + sv,y_t) + D_x c(x_0, y_t), v \rangle| \\
	& \leq L |v|^2 s
\end{align*}
for some $s \in [0,t]$, where we have used the mean value theorem in the second equality and \eqref{eqn: Lipschitz} to obtain the last inequality. Since $s \in [0,t]$, $s \to 0$ as $t \to 0$. Therefore $\mathrm{Err}(t) \to 0$ as $t \to 0$. Using compactness of $\Y$, we obtain a sequence $t_k$ such that $t_k \to 0$ and $y_{t_k} \to y_0$ for some $y_0 \in \Y$ as $k \to \infty$. Hence, taking the limit $k \to \infty$ in \eqref{eqn: diff quo of phi and c} with $t=t_k$, we obtain 
\begin{equation*}\label{eqn: inner phi,v = inner q0,v}
	\langle D_x \phi(x_0) , v \rangle = \langle -D_x c(x_0, y_0), v \rangle.
\end{equation*}
Denote $q_0 = -D_x c(x_0, y_0) \in [\Y]_{x_0}$. Subtracting $\langle q', v \rangle $ on each side of the above equality, we obtain
\begin{equation}\label{eqn: contradicting inner products}
	\langle D_x \phi (x_0) - q', v \rangle = \langle q_0 - q', v \rangle.
\end{equation}
The left hand side of \eqref{eqn: contradicting inner products} is strictly positive since 
\begin{equation*}
	\langle D_x \phi (x_0) - q', v \rangle = \| D_x \phi (x_0) - q' \|^2.
\end{equation*}
The right hand side of \eqref{eqn: contradicting inner products}, however, is non-positive due to \eqref{eqn: v outward normal}. Hence we obtained a contradiction. Therefore there must exists $y_0 \in \Y$ such that $D_x \phi(x_0) = -D_x c (x_0, y_0)$.\\

\noindent\emph{Step 2)} In \emph{step 2} and \emph{step 3}, we prove 
\begin{equation}\label{eqn: cy0 supporting phi}
	-c(x, y_0) +c(x_0, y_0) + \phi(x_0) \leq \phi(x)
\end{equation}
for any $x \in \X$, where $y_0 \in \Y$ is from the \emph{step 1}. We again use a proof by contradiction. Suppose 
\begin{equation}\label{eqn: main 1: cy0>phi}
-c(x', y_0) +c(x_0, y_0) + \phi(x_0) > \phi(x')
\end{equation}
for some $x' \in \X$. Let
\begin{equation*}
	f(x) = -c(x,y_0) +c(x_0, y_0) + \phi(x_0).
\end{equation*}
Then, by Lemma \ref{lem: convex section}, the section $\csec{\phi}{f}$ has a non-empty interior. Let $q_0 = -D_x c(x_0, y_0)$ and $M_0 = -D^2_{xy} c(x_0, y_0)$. We claim that we must have the following:
\begin{equation}\label{eqn: main 1: section = normal cone}
	\left[ \csec{\phi}{f} \right]_{y_0} =\left( M_0^T \N([\Y]_{x_0} ; q_0)+p_0 \right) \cap [\X]_{y_0}.
\end{equation}
\emph{Case 1.} Here, we show 
\begin{equation}\label{eqn: main 1: section in normal cones}
    \left[ \csec{\phi}{f} \right]_{y_0} \subset \left( M_0^T\N([\Y]_{x_0} ; q_0)+p_0 \right) \cup \left( -M_0^T\N([\Y]_{x_0};q_0)+p_0\right).
\end{equation}
If we have $ \left[ \csec{\phi}{f} \right]_{y_0} \subset \left( M_0^T \N([\Y]_{x_0} ; q_0)+p_0 \right)$, then \eqref{eqn: main 1: section in normal cones} is obvious. Hence we only need to see the follow case: 
\begin{equation}\label{eqn: main 1: section - normal cone = empty}
	\left[ \csec{\phi}{f} \right]_{y_0} \setminus \left( M_0^T \N([\Y]_{x_0} ; q_0)+p_0\right) \neq \emptyset.
\end{equation}
Then, since $M_0^T \N([\Y]_{x_0} ; q_0)+p_0$ is closed, $\left[ \csec{\phi}{f} \right]_{y_0} \setminus \left(M_0^T \N([\Y]_{x_0} ; q_0)+p_0\right)$ has a non-empty interior. Let $p_1 \in \Int{\left[ \csec{\phi}{f} \right]_{y_0} \setminus \left( M_0^T \N([\Y]_{x_0} ; q_0)+p_0 \right)}$ and $x_1 = \cexp{y_0}{p_1}$. Then we have
\begin{equation}\label{eqn: main 1: v1}
	v_0 := \frac{d}{dt} x_t \bigg|_{t=0} = M_0^{-T} (p_1 - p_0) \not\in \N([\Y]_{x_0} ; q_0),
\end{equation}
where $x_t \in \cseg{y_0}[x_0, x_1]$. In other words, $v_0$ is not an outward normal vector of $[\Y]_{x_0}$ at $q_0$. We claim that $v_0 \in -\N([\Y]_{x_0};q_0)$, i.e. $v_0$ is an inward normal vector of $[\Y]_{x_0}$ at $q_0$. Since $x_1 \in \Int{\left[ \csec{\phi}{f} \right]_{y_0}}$, Lemma \ref{lem: convex section} implies
\begin{equation*}
	\phi(x_1) < f(x_1) = -c(x_1, y_0) +c(x_0, y_0) + \phi(x_0).
\end{equation*}
Then using the continuity of $\phi$ and $c$, we obtain $r_x >0$ and $r_y>0$ such that $B_{r_x}(x_1) \subset \X$, and if $\| x - x_1 \| \leq r_x$ and $y \in \Y$, $\| y - y_0 \| \leq r_y$, then
\begin{equation}\label{eqn: y near y0 then c>phi}
	-c(x, y) +c(x_0, y) + \phi(x_0) > \phi(x).
\end{equation}
On the other hand, \eqref{eqn: main 1: v1} implies
\begin{equation*}
	x_1 = \cexp{y_0}{p_1} = \cexp{y_0}{p_0 + M_0^T v_0} = \cexp{y_0}{-D_y c(x_0, y_0) -D^2_{yx} c(x_0, y_0) v_0 }.
\end{equation*}
Hence, there exists $\delta >0$ such that if $\| y_0 - y \| \leq \delta$, then
\begin{equation}\label{eqn: x1' near x1}
	\| -D_y c(x_1,y) - (-D_y c(x_0, y) -D^2_{xy} c (x_0, y) v_0 ) \| \leq \frac{r_x}{L}.
\end{equation}
Note that $B_{r_x}(x_1) \subset \X$ with \eqref{eqn: Lipschitz} implies $B_{r_x / L}(-D_y c(x_0, y)) \subset [\X]_y $. In particular, \eqref{eqn: x1' near x1} yields that
\begin{equation*}
    \cexp{y}{-D_y c(x_0, y) -D^2_{xy} c (x_0, y) v_0}
\end{equation*}
is well-defined. If $v_0$ is not an inward normal vector of $[\Y]_{x_0}$ at $q_0$, then there exists $q' \in [\Y]_{x_0}$ such that
\begin{equation}\label{eqn: y in v direction}
	\langle q'-q_0 , v_0 \rangle < 0.
\end{equation}
Choose $s \in (0, \min \{ r_y, \delta \} / L\| q' - q_0\|) $, and define
\begin{equation*}
	q_1 = q_0 + s (q' - q_0).
\end{equation*}
Note that $q_1 \in [\Y]_{x_0}$ due to the convexity of $[\Y]_{x_0}$. Also,
\begin{equation}\label{eqn: main 1: q1 length}
	\| q_1 - q_0 \| \leq \frac{\min\{r_y, \delta\}}{L}, 
\end{equation}
and
\begin{equation}\label{eqn: main 1: q1 direction}
	\langle q_1 - q_0, v_0 \rangle < 0.
\end{equation}
Let $y_1 = \cexp{x_0}{q_1}$ and $x_1' = \cexp{y_1}{-D_y c(x_0, y_1) -D^2_{xy} c (x_0, y_1) v_0 }$. \eqref{eqn: Lipschitz} and \eqref{eqn: main 1: q1 length} imply $\| y_1 - y_0 \| \leq \delta$, then \eqref{eqn: x1' near x1} yields
\begin{equation*}
	\| x_1 - x_1' \| \leq r_x.
\end{equation*}
\eqref{eqn: Lipschitz} and \eqref{eqn: main 1: q1 length} also imply $\| y_1 - y_0 \| \leq r_y$. Then \eqref{eqn: y near y0 then c>phi} implies
\begin{equation}\label{eqn: c>phi at x1'}
	-c(x_1', y_1) +c(x_0, y_1) + \phi(x_0) > \phi(x_1').
\end{equation}
Let $x'_t \in \cseg{y_1}[x_0, x_1']$. Using $-D_x c(x_0, y_0) = D_x \phi(x_0)$ and \eqref{eqn: main 1: q1 direction}, we obtain
\begin{align*}
	&\frac{d}{dt} \bigg|_{t=0} (-c(x_t', y_1) +c(x_0, y_1) + \phi(x_0)-\phi(x_t')) \\
	=&\langle -D_x c(x_0, y_1) -D_x \phi(x_0), \frac{d}{dt}\bigg|_{t=0} x_t' \rangle \\
	=&\langle -D_x c(x_0, y_1)+D_x c(x_0, y_0), v_0 \rangle \\
	<&0.
\end{align*}
Hence, for small enough $t'>0$ we have
\begin{equation}\label{eqn: c<phi at xt'}
	-c(x_{t'}', y_1) +c(x_0, y_1) + \phi(x_0)<\phi(x_{t'}').
\end{equation}
\eqref{eqn: c<phi at xt'} and \eqref{eqn: c>phi at x1'} imply that there exists $s \in (t', 1)$ such that 
\begin{equation}\label{eqn: c=phi at xs'}
	-c(x_s', y_1) +c(x_0, y_1) + \phi(x_0)=\phi(x_s').
\end{equation}
Let $X_s = (x_s', \phi(x_s'))$. Since $\phi$ is an alternative $c$-convex function, we must have $\phi(x) \leq F_{X_0 X_s}(x)$. However, \eqref{eqn: c=phi at xs'} and Lemma \ref{lem: F=-c on segment} yields 
\begin{equation*}
	F_{X_0 X_s} (x_t') = -c(x_{t}', y_1) +c(x_0, y_1) + \phi(x_0)
\end{equation*} 
for $t \in [0,s]$. Evaluating at $t=t'$, we obtain
\begin{equation*}
	\phi(x_{t'}') \leq F_{X_0 X_s} (x_{t'}') = -c(x_{t'}', y_1) +c(x_0, y_1) + \phi(x_0)<\phi(x_{t'}')
\end{equation*}
where the first inequality follows from the alternative $c$-convexity of $\phi$ and the last inequality is from \eqref{eqn: c<phi at xt'}. We obtained a contradiction, and therefore $v_0$ must be an inward normal vector:
\begin{equation}\label{eqn: main 1: section in normal cone}
	v_0 \in -\N([Y]_{x_0};q_0)
\end{equation}
Noting that $v_0 = M_0^{-T}(p_1 - p_0)$, we obtain 
\begin{equation*}
    p_1 \in -M_0^T \N([Y]_{x_0};q_0) + p_0.
\end{equation*}
Since $p_1 \in \Int{\left[ \csec{\phi}{f} \right]_{y_0} \setminus \left( M_0^T \N([\Y]_{x_0} ; q_0)+p_0\right)}$ was arbitrary, we obtain 
\begin{equation*}
    \Int{\left[ \csec{\phi}{f} \right]_{y_0} \setminus  \left( M_0^T\N([\Y]_{x_0} ; q_0)+p_0\right)} \subset  -M_0^T \N([Y]_{x_0};q_0) + p_0,
\end{equation*}
and the closedness of $ -M_0^T \N([Y]_{x_0};q_0) + p_0$ yields \eqref{eqn: main 1: section in normal cones}.\\
\emph{Case 2.} Next, we show 
\begin{equation}\label{eqn: main 1: normal cone in section}
	\left( M_0^T \N([\Y]_{x_0};q_0) + p_0 \right) \cap [\X]_{y_0} \subset \left[ \csec{\phi}{f} \right]_{y_0}.
\end{equation}
Suppose, otherwise, 
\begin{equation*}
	p_1 \in \left( (M_0^T \N([\Y]_{x_0};q_0) + p_0) \setminus \left[ \csec{\phi}{f} \right]_{y_0} \right) \cap [\X]_{y_0}.
\end{equation*}
 Again, let $x_1 = \cexp{y_0}{p_1}$ and $x_t \in \cseg{y_0}[x_0,x_1]$. Since $x_1 \not\in \csec{\phi}{f}$, we have
\begin{equation*}
	f(x_1) < \phi(x_1).
\end{equation*}
Moreover, the alternative $c$-convexity of $\phi$ implies $\phi(x_t) \leq F_{X_0 X_1}(x_t)$ where $X_i = (x_i, \phi(x_i) )$ for $i=0,1$. Lemma \ref{lem: existence of touching c} and Lemma \ref{lem: F=phi at end pts} yield that there exists $y' \in \Y$ such that
\begin{equation*}
	-c(x_1,y') +c(x_0, y') +\phi(x_0) = \phi(x_1).
\end{equation*}
Let $q' = -D_x c(x_0, y') $ and $\C (x,q) = -c(x,\cexp{x_0}{q}) +c(x_0, \cexp{x_0}{q}) +\phi(x_0)$, then we have $\C(x_1,q') > f(x_1)$. Since $\C$ is continuous in $q$ variable, there exists $r'>0$ such that $\C(x_1,q) > f(x_1)$ for any $q \in B_{r'}(q') \cap [\Y]_{x_0}$. Then the set 
\begin{equation*}
	(B_{r'}(q') \cap [\Y]_{x_0}) \setminus \{ q \in [\Y]_{x_0} | \langle q - q_0, M_0^{-T}(p_1-p_0) \rangle = 0 \}
\end{equation*}
is not an empty set because $\{ q \in [\Y]_{x_0} | \langle q - q_0, M_0^{-T}(p_1-p_0) \rangle = 0 \}$ has an empty interior. Hence we can choose $q_1$ from the above set:
\begin{equation}\label{eqn: main 1: q1}
	q_1 \in (B_{r'}(q') \cap [\Y]_{x_0}) \setminus \{ q \in [\Y]_{x_0} | \langle q - q_0, M_0^{-T}(p_1-p_0) \rangle = 0 \}.
\end{equation}
Let $f_1(x)  =\C(x, q_1)$. Then $f_1$ is a $c$-affine function and we have $f_1 (x_1) > f(x_1)$ and $f_1(x_0) = f(x_0)$. Lemma \ref{lem: convex section} yields 
\begin{equation*}
	f_1 (x_t) \geq f(x_t). 
\end{equation*}
Then we obtain
\begin{align*}
	0 & \leq \frac{d}{dt} ( f_1(x_t)- f(x_t) ) \bigg|_{t=0} \\
	& = \langle q_1 - q_0 , M_0^{-T}(p_1 - p_0) \rangle.
\end{align*}
On the other hand, thanks to $p_1 \in  (M_0^T \N([\Y]_{x_0};q_0) + p_0)$, we also have $M_0^{-T}(p_1 - p_0) \in \N([\Y]_{x_0};q_0)$, that is,
\begin{equation*}
\langle q_1 - q_0, M_0^{-T}(p_1 - p_0) \rangle \leq 0.
\end{equation*}
Therefore, we obtain $\langle q_1 - q_0, M_0^{-T}(p_1 - p_0) \rangle = 0$, which contradicts to \eqref{eqn: main 1: q1}. Hence, we must have \eqref{eqn: main 1: normal cone in section}. Now, Lemma \ref{lem: conv set in normal cones} with \eqref{eqn: main 1: section in normal cones} and \eqref{eqn: main 1: normal cone in section} yields that we must have \eqref{eqn: main 1: section = normal cone}.\\

\noindent \emph{Step 3)} \eqref{eqn: main 1: section = normal cone} in particular yields that $\N([\Y]_{x_0};q_0)$ has a non-empty interior. Let $v_1 \in \Int{\N([\Y]_{x_0};q_0)}$, then we have 
\begin{equation}\label{eqn: main 1: q0 exposed}
\langle v_1, q-q_0 \rangle < 0, \forall q \in [\Y]_{x_0} \setminus \{ q_0 \}.
\end{equation}
In other words, $q_0$ is an exposed point of $[\Y]_{x_0}$. Let $p_1 = p_0 + s_0 M_0^T v_1$, where $s_0>0$ is chosen such that $p_1 \in [\X]_{y_0}$, which is possible due to $p_0 \in \Int{[\X]_{y_0}}$. \eqref{eqn: main 1: section = normal cone} then implies $p_1 \in \left[ \csec{\phi}{f} \right]_{y_0}$.  Let $x_1 = \cexp{y_0}{p_1}$ and $x_t \in \cseg{y_0}[x_1, x_0]$. Also, let $X_i = (x_i,f(x_i))$ and $X_i' = (x_i, \phi(x_i))$. Then Lemma \ref{lem: ordered F} with $x_i \in \csec{\phi}{f}$ for $i = 0,1$ implies
\begin{equation*}
\Fxp(x) \leq \Fx(x), \quad \forall x \in \X.
\end{equation*}
On the other hand, the alternative $c$-convexity of $\phi$ implies $\phi(x) \leq \Fxp(x)$ for all $x \in \X$. Then, Using Lemma \ref{lem: F=-c on segment}, evaluating the above inequality at $x=x_t$ implies
\begin{align*}
\phi(x_t) \leq \Fxp(x_t) \leq \Fx(x_t) = f(x_t).
\end{align*}
Also, noting that $\phi(x_0) = \Fxp(x_0) = f(x_0)$, we obtain
\begin{equation*}
\frac{d}{dt} \phi(x_t) \leq \frac{d}{dt} \Fxp(x_t) \leq \frac{d}{dt} f(x_t).
\end{equation*}
Since we have $D_x \phi(x_0) = -D_x c(x_0, y_0)$ from the \emph{step 1}, the above inequality yields
\begin{equation}\label{eqn: main 1: dt fxp t=0}
\frac{d}{dt} \Fxp(x_t) = \frac{d}{dt} f(x_t) = \langle q_0, M_0^{-T}(p_1 - p_0) \rangle.
\end{equation}
We claim 
\begin{equation}\label{eqn: main 1: squeezed limit}
    \langle -D_x c(x_0, y'), M_0^{-T} (p_1 - p_0) \rangle \geq \langle q_0, M_0^{-T}(p_1 - p_0) \rangle
\end{equation}
for some $y' \in \Y$. The idea is to approximate the $c$-chord $\Fxp$ with $c$-affine functions. Lemma \ref{lem: c-chord with c-affine passing end points} yields that we can approximate $\Fxp$ using a sequence of $c$-affine functions that pass through one of the end points $X_0'$ or $X_1'$. We divide the approximation into two cases. \\
\emph{Case 1}. Suppose that there exists sequences $\epsilon_j \in (0,1)$ and $y_{j,k} \in \Y$ such that $\epsilon_j \to 0$ as $j \to \infty$, and
\begin{equation}\label{eqn: main 1: yk leq Fxp}
    \begin{aligned}
        & -c(x_1, y_{j,k}) + c(x_0, y_{j,k}) + \phi(x_0) \leq \Fxp(x_1),\\
        & -c(x_{\epsilon_j}, y_{j,k}) + c(x_0, y_{j,k}) + \phi(x_0) +O(k) \geq \Fxp(x_{\epsilon_j})  \quad \textrm{as} \quad k \to \infty.
    \end{aligned}
\end{equation}
i.e. $y_{j,k}$ produces a sequence of $c$-affine functions that pass through $X_0'$, and pass below $X_1'$ while approximating $\Fxp(\epsilon_{j})$. Then for each $\epsilon_j$, there exists $k(j)$ such that 
\begin{equation*}
-c(x_{\epsilon_j}, y_j,{k(j)}) + c(x_0, y_{j,k(j)}) + \phi(x_0) +(\epsilon_j)^2 \geq \Fxp(x_{\epsilon_j}).
\end{equation*}
Rearranging the terms and dividing the above inequality by $\epsilon_j$, we obtain
\begin{equation*}
\frac{-c(x_{\epsilon_j}, y_{k(j)}) + c(x_0, y_{k(j)})}{\epsilon_j} + \epsilon_j \geq \frac{\Fxp(x_{\epsilon_j}) - \phi(x_0)}{\epsilon_j}.
\end{equation*}
Noting that we have $\Fxp(x_0) = \phi(x_0)$ and \eqref{eqn: main 1: dt fxp t=0}, the above inequality yields
\begin{equation*}
\langle -D_x c(x_0, y_{k(j)}), \frac{d}{dt} x_t \bigg|_{t=0} \rangle + o(\|x_{\epsilon_j} - x_0\|)+\epsilon_j \geq \langle q_0, M_0^{-T}(p_1 - p_0) \rangle + o(\|x_{\epsilon_j} - x_0\|).
\end{equation*}
Thanks to $\| x_{\epsilon_j} - x_0 \| \leq \epsilon_j L\|p_1 - p_0 \|$, and using $x_t \in \cseg{y_0}[x_0,x_1]$, we deduce
\begin{equation*}
\langle -D_x c(x_0, y_{k(j)}), M_0^{-T} (p_1 - p_0) \rangle + O(\epsilon_j) \geq \langle q_0, M_0^{-T}(p_1 - p_0) \rangle + o(\epsilon_j).
\end{equation*}
The compactness of $\Y$ yields that, up to a subsequence (not relabeled), $y_{k(j)} \to y'$ for some $y' \in \Y$ as $j \to \infty$. Then taking the limit $j \to \infty$ in the above inequality yields \eqref{eqn: main 1: squeezed limit}.\\
\emph{Case 2}. Suppose that there is no sequences that satisfies the assumptions of \emph{Case 1}. Then, Lemma \ref{lem: c-chord with c-affine passing end points} yields that we must have $\epsilon_0 \in (0,1)$ and a sequence $y_{\epsilon, k}'$ such that, for any $\epsilon \in (0,\epsilon_0)$, we have
\begin{equation}\label{eqn: main 1: yk' leq Fxp}
    \begin{aligned}
        & -c(x_0, y_{\epsilon, k}') + c(x_1, y_{\epsilon,k}') + \phi(x_1) \leq \Fxp(x_0),\\
        & -c(x_{\epsilon}, y_{\epsilon,k}') + c(x_1, y_{\epsilon,k}') + \phi(x_1) +O(k) \geq \Fxp(x_{\epsilon})  \quad \textrm{as} \quad k \to \infty.
    \end{aligned}
\end{equation}
i.e. $y_{\epsilon, k}'$ yields a sequence of $c$-affine functions that pass through $X_1'$, and pass below $X_0'$ while approximating $\Fxp(\epsilon)$ for any $\epsilon \in (0, \epsilon_0)$. Then, for each $\epsilon \in (0,\epsilon_0)$, there exists $k(\epsilon)$ such that, denoting $y_{\epsilon, k(\epsilon)}'=y_\epsilon'$,
\begin{equation*}
    -c(x_\epsilon, y_{\epsilon}') + c(x_1, y_\epsilon') + \phi(x_1) + \epsilon^2 \geq \Fxp(x_\epsilon).
\end{equation*}
We subtract the first inequality in \eqref{eqn: main 1: yk' leq Fxp} with $k=k(\epsilon)$ from the above inequality and obtain
\begin{equation*}
    -c(x_\epsilon, y_{\epsilon}') + c(x_0, y_{\epsilon}') + \epsilon^2 \geq \Fxp(x_\epsilon) -\Fxp(x_0).
\end{equation*}
Dividing the above inequality by $\epsilon$ and using \eqref{eqn: main 1: dt fxp t=0} yields
\begin{equation*}
    \langle -D_x c(x_0, y_\epsilon') , \frac{d}{dt} x_t \bigg|_{t=0} \rangle + o(\| x_\epsilon - x_0 \|) + \epsilon \geq \langle q_0, M_0^{-T}(p_1 - p_0) \rangle + o(\| x_\epsilon - x_0 \| ).
\end{equation*}
The compactness of $\Y$ yields that, passing to a subsequence (not relabeled), we can assume $y_\epsilon' \to y'$ for some $y' \in \Y$. Hence, taking the limit in the above inequality implies \eqref{eqn: main 1: squeezed limit} and the claim is proved.\\
Denoting $q' = -D_x c(x_0, y')$, the above inequality implies
\begin{equation*}
\langle q' - q_0 , M_0^{-T} (p_1 - p_0) \rangle \geq 0.
\end{equation*}
Thanks to $M_0^{-T} (p_1 - p_0) = s_0 v_1$, the above inequality contradicts to \eqref{eqn: main 1: q0 exposed} unless $q' = q_0$. If $q' = q_0$, then the bi-twist condition yields $y' = y_0$. However, we also have \eqref{eqn: main 1: yk leq Fxp} that implies
\begin{align*}
-c(x_1, y') + c(x_0, y') + \phi(x_0) \leq \Fxp(x_1) & < f(x_1) \\
& = -c(x_1, y_0) + c(x_0, y_0) + \phi(x_0).
\end{align*}
Thus $q' \neq q_0$, leading to the final contradiction. As a consequence, the first assumption \eqref{eqn: main 1: cy0>phi} cannot hold, and we must have \eqref{eqn: cy0 supporting phi}.

\noindent\emph{Step 4)} Finally, Let $N \subset \X$ be the set of points where $\phi$ is not differentiable, and we define $\phi' : \X \to \R$ by
\begin{equation*}
\phi'(x) = \sup \left\{ -c(x,y_0) +h \bigg| \begin{matrix}D \phi(x_0) = -D_x c(x_0,y_0), h = c(x_0, y_0) + \phi(x_0) \\ \textrm{ for some } x_0 \in \Int{\X}\setminus N \end{matrix} \right\}.
\end{equation*}
Note that $\phi'$ is a $c$-convex function as it is defined to be a supremum over a family of $c$-affine functions. In particular, $\phi'$ is a continuous function by Lemma \ref{lem: Lipschitz c-conv}. Moreover, for any $x_0 \in \Int{\X}\setminus N$, and $y_0 \in \Y$ and $h \in \R$ that satisfy $ D_x \phi(x_0) = -D_x c(x_0,y_0), h = c(x_0, y_0) + \phi(x_0)$ (which exist due to the step 1), \eqref{eqn: cy0 supporting phi} yields
\begin{equation*}
-c(x,y_0) +h \leq \phi(x), \quad -c(x_0, y_0) + h = \phi(x_0).
\end{equation*}
Hence we have $\phi'(x_0) = \phi(x_0)$ for any $x_0 \in \Int{\X}\setminus N$. Then by the continuity of $\phi$ and $\phi'$, and that $\Int{\X}\setminus N $ is dense in $\X$, we obtain $\phi'(x) = \phi(x)$ for any $x \in \X$. Therefore, $\phi$ is a $c$-convex function.
\end{proof}

\subsection{Second part of the main theorem}\label{subsec: second part}
In this subsection, we will prove the second part of the main theorem by constructing a function that is alternative $c$-convex but not $c$-convex. Since we will not assume Loeper's property in this section, the lemmas that assume Loeper's property cannot be used in the proof of the second part of the main theorem. Hence we start with a few lemmas that will be used in the proof.

To prove the second part of the main theorem, we will assume that the cost function $c$ does not satisfy Loeper's property. In particular, there exist $y, y_0 \in \Y$, $x_0, x_1 \in \X$ and $x_t \in \cseg{y_0}(x_0,x_1)$ that do not satisfy \eqref{eqn: Loeper}. In the next lemma, we show that $y, y_0$ can be chosen from $\Int{\Y}$.

\begin{Lem}\label{lem: non Loeper interior Y}
Suppose that $c$ does not satisfy Loeper's property \eqref{eqn: Loeper}. Then there exist $y_0, y_1 \in \Int{\Y}$, $x_0, x_1 \in \Int{\X}$, $h_1' \in \R$ and $x_{t_0} \in \cseg{y_0}(x_0,x_1)$ such that
\begin{align*}
&-c(x_i, y_1)+h_1' < -c(x_i, y_0), \quad i = 0,1 \\
&-c(x_{t_0}, y_1)+h_1 > -c(x_{t_0}, y_0).
\end{align*}
\end{Lem}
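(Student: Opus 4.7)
The plan is to start from the negation of Loeper's property and push the witnesses into the interior of $\Y$ by a continuity argument. The failure of \eqref{eqn: Loeper} yields $\tilde y, \tilde y_0 \in \Y$, $\tilde x_0, \tilde x_1 \in \X$ and $\tilde x_\tau \in \cseg{\tilde y_0}(\tilde x_0, \tilde x_1)$ for some $\tau \in (0,1)$ (the parameter is strictly interior because \eqref{eqn: Loeper} trivially holds at the endpoints) such that
\begin{equation*}
-c(\tilde x_\tau, \tilde y) + c(\tilde x_\tau, \tilde y_0) > \max_{i=0,1}\bigl\{-c(\tilde x_i, \tilde y) + c(\tilde x_i, \tilde y_0)\bigr\}.
\end{equation*}

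First I would perturb $\tilde y_0$ to a nearby point $y_0 \in \Int{\Y}$, which is accessible thanks to $\Int{\Y} \neq \emptyset$. Since $\tau$ is fixed, the prescription $p_i := -D_y c(\tilde x_i, y_0)$ and $x_\tau := \cexp{y_0}{\tau p_1 + (1-\tau) p_0}$ defines a point in $\cseg{y_0}(\tilde x_0, \tilde x_1)$ that depends continuously on $y_0$ and converges to $\tilde x_\tau$ as $y_0 \to \tilde y_0$, by continuity of $D_y c$ and $\cexp{y_0}{\cdot}$ (see \eqref{eqn: Lipschitz}). Consequently the function
\begin{equation*}
(y_0, y) \longmapsto \bigl[-c(x_\tau, y) + c(x_\tau, y_0)\bigr] - \max_{i=0,1}\bigl\{-c(\tilde x_i, y) + c(\tilde x_i, y_0)\bigr\}
\end{equation*}
is continuous and strictly positive at $(\tilde y_0, \tilde y)$, hence remains strictly positive for $(y_0, y)$ in a small neighborhood. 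I now pick $y_0, y_1 \in \Int{\Y}$ inside this neighborhood, close to $\tilde y_0$ and $\tilde y$ respectively, and rename $x_0 := \tilde x_0$, $x_1 := \tilde x_1$, $t' := \tau$, $x_{t'} := x_\tau$; this yields
\begin{equation*}
-c(x_{t'}, y_1) + c(x_{t'}, y_0) > \max_{i=0,1}\bigl\{-c(x_i, y_1) + c(x_i, y_0)\bigr\}.
\end{equation*}

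The constant $h_1$ is then obtained by separation. Rewriting the above as $c(x_{t'}, y_1) - c(x_{t'}, y_0) < \min_{i=0,1}\{c(x_i, y_1) - c(x_i, y_0)\}$, any real number $h_1$ strictly between these two values—the arithmetic mean works—simultaneously satisfies $h_1 > c(x_{t'}, y_1) - c(x_{t'}, y_0)$ and $h_1 < c(x_i, y_1) - c(x_i, y_0)$ for $i = 0, 1$; rearranging these two inequalities produces precisely the required $-c(x_i, y_1) + h_1 < -c(x_i, y_0)$ and $-c(x_{t'}, y_1) + h_1 > -c(x_{t'}, y_0)$. The only delicate point is the continuity of the middle point $x_\tau$ under perturbation of $y_0$, which is an immediate consequence of the Lipschitz estimates in \eqref{eqn: Lipschitz}.
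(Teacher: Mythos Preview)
Your argument is correct and follows essentially the same route as the paper: define the continuous violation function $(y_0,y)\mapsto -c(x_\tau,y)+c(x_\tau,y_0)-\max_i\{-c(x_i,y)+c(x_i,y_0)\}$ with $x_\tau=\cexp{y_0}{\tau p_1+(1-\tau)p_0}$, perturb the witnesses into $\Int{\Y}\times\Int{\Y}$ while keeping it positive, and then take $h_1$ as the midpoint between $c(x_{t'},y_1)-c(x_{t'},y_0)$ and $\min_i\{c(x_i,y_1)-c(x_i,y_0)\}$. The paper's explicit formula for $h_1$ is exactly this arithmetic mean, so even that step coincides.
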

\begin{proof}
If $c$ does not satisfy Loeper's property, then there exist $y_1', y_0' \in \Y$, $x_0', x_1' \in \X$ and $x_{t'}' \in \cseg{y_0}(x_0',x_1')$ that does not satisfy \eqref{eqn: Loeper}:
\begin{equation*}
-c(x_{t'}', y_1') +c(x_{t'}', y_0') > \max\{ -c(x_i', y_1') +c(x_i',y_0') | i = 0,1 \}.
\end{equation*}
We define
\begin{equation*}
    z(x,x',y,t) = \cexp{y}{-(1-t)D_y c(x, y)-tD_yc(x',y)},     
\end{equation*}
and
\begin{equation*}
    \begin{aligned}
        &g(x,x'y,y',t) \\
        =&  -c(z(x,x',y,t), y') +c(z(x,x',y,t), y) 
        - \max\{ -c(x, y') +c(x,y), -c(x', y') +c(x',y)  \} .
    \end{aligned}
\end{equation*}
Then $g: \X \times \X \times \Y \times \Y \times [0,1] \to \R$ is a continuous function. Hence, $g^{-1}(0,\infty)$ is an open set (relative to $\X \times \X \times \Y \times \Y \times [0,1]$) and $g^{-1}(0,\infty) \neq \emptyset$ as $(x_0', x_1', y_0', y_1'. t') \in g^{-1}(0,\infty)$. Choose $(x_0, x_1, y_0, y_1, t_0) \in g^{-1}(0,\infty) \cap \Int{\X \times \X \times \Y \times \Y \times [0,1]}$. Then $x_0,x_1 \in \Int{\X}$, $y_0, y_1 \in \Int{\Y}$, and $(x_0,x_1,y_0, y_1,_0') \in g^{-1}(0,\infty)$ implies
\begin{equation}\label{eqn: not Loeper int Y: not Loeper int Y}
-c(x_{t_0}, y_1) +c(x_{t_0}, y_0) > \max\{ -c(x_i, y_1) +c(x_i,y_0) | i = 0,1 \},
\end{equation}
where $x_{t_0}=z(x_0,x_1,y_0,t_0) \in \cseg{y_0}[x_0,x_1] $. Define
\begin{equation*}
\lambda = - \max\{ -c(x_i, y_1) +c(x_i,y_0) | i = 0,1 \},
\end{equation*}
and
\begin{equation*}
h_1' = \lambda - \frac{1}{2} (-c(x_{t_0}, y_1) + \lambda + c(x_{t_0}, y_0)).
\end{equation*}
Then
\begin{align*}
	&-c(x_0, y_1) + h_1' + c(x_0, y_0) \\
	=& -c(x_0,y_1) + c(x_0,y_0) + \frac{1}{2}(\lambda + c(x_{t_0},y_1) - c(x_{t_0}, y_0)) \\
	<& -c(x_0,y_1) + c(x_0,y_0) \\
	& - \max\{ -c(x_i, y_1) +c(x_i,y_0) | i = 0,1 \} \\
	\leq & 0
\end{align*}
where we have used \eqref{eqn: not Loeper int Y: not Loeper int Y} to obtain the strict inequality above. Similarly, we also obtain $-c(x_1, y_1) + h_1' < - c(x_1, y_0)$, and we obtain the first part of the lemma. On the other hand, we have
\begin{align*}
	&-c(x_{t_0}, y_1) + h_1' + c(x_{t_0}, y_0) \\
	=& -c(x_{t_0},y_1) + c(x_{t_0},y_0) \\
	& + \frac{1}{2}(\lambda + c(x_{t_0},y_1) - c(x_{t_0}, y_0)) \\
	=& \frac{1}{2}(-c(x_{t_0},y_1) +c(x_{t_0}, y_0)) + \lambda) \\
	>& 0
\end{align*}
where we have used \eqref{eqn: not Loeper int Y: not Loeper int Y} for the strict inequality above. This shows the second part of the lemma.
\end{proof}

In the following lemma, we choose the points that does not satisfy Loeper's property with extra structures that we need for constructing an alternative $c$-convex function which has a section that is disconnected. 

\begin{Lem}\label{lem: non Loeper set}
    Suppose that $c$ does not satisfy Loeper's property \eqref{eqn: Loeper}. Then there exist $y_0, y_1 \in \Int{\Y}$, $z_0, z_1 \in \Int{\X}$, $h_1 \in \R$, $\theta\in(0,\frac{\pi}{2})$ and $\rho>0$ that satisfies the following:
    \begin{itemize}
        \item [1] $-c(z_i,y_0) = -c(z_i, y_1)+h_1$, $i=0,1$.
        \item [2] For $w_j = -D_y c(z_j, y_0)$, we have 
         \begin{equation*}
             \begin{aligned}
                 \langle (-D^2_{xy}c(z_0,y_0))^{-1}(-D_x c(z_0, y_1) + D_x c(z_0, y_0)), w_1 - w_0 \rangle >0, \\ 
                 \langle (-D^2_{yx}c(z_1,y_0))^{-1}(-D_x c(z_1, y_1) + D_x c(z_1, y_0)), w_0 - w_1 \rangle >0.
             \end{aligned}
         \end{equation*}
        \item [3]  $\cos \theta > \frac{7}{8}$ and
        \begin{equation*}
            [w_0,w_1] \subset \left( \K^{\rho}_{\theta,\xi}+w_0\right) \cup \left( \K^{\rho}_{\theta, -\xi}+w_1 \right) \subset \left[ \csec{f_{0}}{f_{1}} \right]_{y_0},
        \end{equation*}
        where $\xi= w_1-w_0$, $f_{0}(x) = -c(x, y_0)$, and $f_{1}(x) = -c(x,y_1) + h_1$.
    \end{itemize}
\end{Lem}
\begin{proof}
    Let $(x_0,x_1,y_0,y_1,t_0)$ and $h_1'$ be from Lemma \ref{lem: non Loeper interior Y}. Then the function 
    \begin{equation*}
        \C(p)= -c(\cexp{y_0}{p}, y_1) + c(\cexp{y_0}{p}, y_0) 
    \end{equation*}
    is not quasi-convex in $p$ variable, since, for $p_0 = -D_y c(x_0, y_0)$ and $ p_1 =-D_y c(x_1, y_0) $, Lemma \ref{lem: non Loeper interior Y} yields that we have
    \begin{equation*}
        \begin{aligned}
            &\C(p_i) < -h_1', \quad i = 0,1 \\
            &\C(p_{t_0}) > -h_1',
        \end{aligned}
    \end{equation*}
    where $p_{t_0} = t_0 p_1 + (1-t_0) p_0 $. We claim that there exists $\tau \in (0,t_0)$ such that 
    \begin{equation}\label{eqn: increasing at t0}
        \frac{d}{dt}\C(p_t) \bigg|_{t=\tau} >0, \quad \C(p_{\tau}) > -h_1'.
    \end{equation}
    Define
    \begin{equation*}
	   \T = \{ t \in [0,t_0] | \C(p_s) > -h_1', \forall s \in [t, t_0] \}.
    \end{equation*}
    Note that $t_0 \in \T$, so that $\T \neq \emptyset$. Let $\tau_0 = \inf\T$. Then by definition of $\T$, we have
    \begin{equation}\label{eqn: main 2: t''}
	   \C(p_{\tau_0}) \geq -h_1'.
    \end{equation}
    If we have the strict inequality in \eqref{eqn: main 2: t''}, the continuity of $\C$ implies that there exists $\delta_\T >0$ such that
    \begin{equation*}
	   \C(p_t)>-h_1', \quad \forall t \in (\tau_0-\delta_\T, \tau_0+\delta_\T),
    \end{equation*}
    which contradicts to $\tau_0 = \inf\T$. Therefore, we must have the equality in \eqref{eqn: main 2: t''}
    \begin{equation*}
	   \C(p_{\tau_0})=-h_1'.
    \end{equation*}
    In particular, we have $\tau_0 < t_0$. Then we apply the mean value theorem for the function $\C(p_t)$ on the interval $[\tau_0, t_0]$ and obtain $\tau \in (\tau_0, t_0)$ such that
    \begin{align*}
	   \frac{d}{dt}\C(p_t)\bigg|_{t=\tau} & = \frac{\C(p_{t_0})-\C(p_{\tau_0})}{t_0-\tau_0} \\
	   & = \frac{\C(p_{t_0}) +h_1'}{t_0-\tau_0} \\
	   & > 0.
    \end{align*}
    Moreover, by definition of $\T$, we have $\tau \in \T$. Hence \eqref{eqn: increasing at t0} is satisfied at $\tau$.\\
    Let $h_1 = - \C(p_{\tau})$ and define 
    \begin{equation*}
        \sigma = \inf\{ s > \tau | \C(p_s)=-h_1\}.
    \end{equation*}
    Note that \eqref{eqn: increasing at t0} implies $\sigma > \tau$ and 
    \begin{equation}\label{eqn: p' segment in sup level set}
        (p_{\tau}, p_{\sigma}) \subset \{ p \in [\X]_{y_0}| \C(p) > -h_1\}.
    \end{equation}
    In addition, since $x_0$, $x_1$ are interior points of $\X$, $p_0=-D_y c(x_0, y_0)$ and $p_1 = -D_y c(x_1, y_0)$ are also interior points of $[\X]_{y_0}$, and therefore, by the convexity of $[\X]_{y_0}$, so are $p_{\tau}$ and $p_{\sigma}$. Also, the definition of $\sigma$ if we have 
    \begin{equation*}
        \frac{d}{dt}\C(p_t) \bigg|_{t=\sigma} > 0,
    \end{equation*}
    then $\C(p_{s_0}) < -h_1$ for some $s_0< \sigma$, and the intermediate value theorem yields that there exists $s_1 \in (\tau, s_0)$ such that $\C(p_{s_1}) = -h_1$. This contradicts to the definition of $\sigma$, and hence we have
    \begin{equation}\label{eqn: C' at s''}
        \frac{d}{dt}\C(p_t)\bigg|_{t=\sigma} \leq 0.
    \end{equation}
    If the strict inequality holds in \eqref{eqn: C' at s''}, then let $w_0 = p_{\tau}$ and $w_1 = p_{\sigma}$. Then \eqref{eqn: increasing at t0}, \eqref{eqn: C' at s''} and the definition of $p_{\sigma}$ imply the first and the second part of the lemma with $z_i = \cexp{y_0}{w_i}$ for $i=0,1$.\\    
    Otherwise, we have the equality in \eqref{eqn: C' at s''}. Then, up to an isometry, we may assume $p_{\tau} = 0$, $p_{\sigma} = a e_n$, and $D_p \C(p_{\sigma})=b e_1$ for some $a, b>0$. Then, \eqref{eqn: increasing at t0} implies $\langle D_p \C(0), e_n \rangle = \frac{\partial}{\partial p^n} \C(0)>0$. 
    Using that $\C$ is $C^1$, there exists $\delta_0>0$ such that 
    \begin{equation*}
        \|p\|<\delta_0  \Rightarrow \| D_p \C(p) - D_p \C(0) \|< \frac{1}{2}\langle D_p \C(0), e_n \rangle, 
    \end{equation*}
    In particular, if $\| p \|< \delta_0$, then $\langle D_p \C(p), e_n \rangle>0$. Similarly, we can also obtain $\delta_1>0$ such that
    \begin{equation*}
        \|p-ae_n\| < \delta_1 \Rightarrow \| D_p \C(p) - D_p\C(ae_n)\| < \frac{1}{2}\|D_p\C(ae_n)\|,
    \end{equation*}
    which implies that, if $\| p - ae_n\| < \delta_1$, then $\langle D_p \C(p), e_1\rangle >0$. Let 
    \begin{equation*}
        \begin{aligned}
            &\epsilon_0 = \min\{ \frac{1}{2}\delta_0 , \frac{1}{4}a\}, \\
            &\epsilon_1 = \min\{ \frac{1}{2}\delta_1, \frac{1}{4}a\}.
        \end{aligned}
    \end{equation*}
    Then we have $\epsilon_0 e_n \in B_{r_0}(0)$, $(a-\epsilon_1)e_n \in B_{r_1}(ae_n)$, and $\epsilon_0 < (a-\epsilon_1)$. Moreover, the definitions of $\tau$ and $\sigma$ imply $[\epsilon_0 e_n , (a-\epsilon_1)e_n] \subset \{p \in [\X]_{y_0} | \C(p) > -h_1 \}$. Then the openness (relative to $[\X]_{y_0}$) of the set $\{p \in [\X]_{y_0}| \C(p) > -h_1 \}$ together with the compactness of $[\epsilon_0 e_n , (a-\epsilon_1)e_n]$ and $p_\tau, p_\sigma \in \Int{[\X]_{y_0}}$ implies that there exists $\delta_2>0$ such that 
    \begin{equation}\label{eqn: nbhd of p' segment in sup level set}
        B_{\delta_2}([\epsilon_0 e_n , (a-\epsilon_1)e_n]) \subset \{p \in [\X]_{y_0}| \C(p) > -h_1 \}.
    \end{equation}
     Noting $\frac{\partial}{\partial p^n} \C(0)>0$, we can use the implicit function theorem to obtain $l>0$ and a $C^1$ function $g$ defined on a ($n-1$)-dimensional ball $B^{n-1}_{l}(0)$ such that $\C(\hat{p}, g(\hat{p}) )= -h_1$ for $\hat{p} = (p^1, \cdots , p^{n-1}) \in B^{n-1}_l(0)$. Let $\delta_3>0$ satisfy $\delta_3< \min\{\epsilon_0, \epsilon_1, \delta_2, l\}$ and satisfy the following:
     \begin{equation}\label{eqn: p0 close to 0}
        \begin{aligned}
            & \| (s\hat{e}_1, g(s\hat{e}_1)) \| < \epsilon_0, \quad \forall s \in (0, \delta_3), \\
            & \| (\delta_3\hat{e}_1, g(\delta_3\hat{e}_1))\| < \frac{a\langle D_p \C(0),e_n\rangle}{3\| D_p \C(0) \|}.
        \end{aligned}
     \end{equation}
     Note that it is possible to choose such $\delta_3$ since $g$ is continuous and $g(0)=0$. Let $w_0 = (\delta_3 \hat{e}_1, g(\delta_3 \hat{e}_1))$ where $\hat{e}_1 = (1,0, \cdots, 0) \in \R^{n-1}$, and let $w_1 = ae_n = p_{\sigma}$. Then we have $\C(w_0)=\C(w_1)=-h_1$, which gives the first part of the lemma with $z_i = \cexp{y_0}{w_i}$. In addition, the first equation of the second part of the lemma follows from the following computation:
     \begin{align*}
         \langle D_p\C(w_0), ae_n - \pi_0 \rangle & = a \langle D_p \C(w_0), e_n\rangle +\langle D_p\C(w_0) - D_p \C (p_{\tau}), w_0 \rangle + \langle D_p \C(p_{\tau}), w_0 \rangle \\
         & \geq \frac{a}{2} \langle D_p\C(p_{\tau}), e_n \rangle - \frac{1}{2}\langle D_p\C(p_{\tau}), e_n \rangle \|w_0\| - \| D_p \C(p_{\tau})\|\|w_0\| \\
         & \geq \frac{a}{2} \langle D_p \C(p_{\tau}), e_n \rangle -\frac{3}{2}\| D_p \C(p_{\tau})\|\|w_0\| \\
         & > 0,
     \end{align*}
     where we have use \eqref{eqn: p0 close to 0} in the last strict inequality. The second equation of the second part of the lemma follows from $D_p\C(w_1)=be_1$ with $b>0$.\\
     Next, we claim 
     \begin{equation}\label{eqn: open p segment in sup level set}
     (w_0, w_1) \subset \{ p \in [\X]_{y_0} | \C(p) > -h_1 \}.
     \end{equation}
     Note first that for $w_t \in (w_0, w_1)$, we have
     \begin{equation*}
          w_t=((1-t)\delta_3 \hat{e}_1, (1-t)g(\delta_3\hat{e}_1)+ at). 
     \end{equation*}
     We divide the proof of the claim into three cases depending on the value of the last coordinate of $w_t$:  $(1-t)g(\delta_3\hat{e}_1)+ at$.\\
     \emph{Case 1}. If $(1-t)g(\delta_3\hat{e}_1) + at < \epsilon_0$, then \eqref{eqn: p0 close to 0} and $\epsilon_0 < \frac{1}{2}\delta_0$ imply $w_t \in B_{\delta_0}(0)$. Then, thanks to the definition of $\delta_0$, we observe
     \begin{equation*}
         \C(w_t) +h_1 = \int_0^1 \langle D_p \C(sw_t + (1-s) G_t , \|w_t- G_t\|e_n \rangle >0.
     \end{equation*}
     where $G_t = ((1-t)\hat{e}_1, g((1-t)\hat{e}_1))$. Hence, $w_t \in \{ p \in [X]_{y_0} | \C(p) > -h_1 \}$ for any $t\in (0,1)$ such that $(1-t)g(\delta_3\hat{e}_1) + at < \epsilon_0$.\\
     \emph{Case 2}. If $(1-t)g(\delta_3\hat{e}_1) + at > (a-\epsilon_1)$, then $\delta_3< \frac{1}{2}\delta_1$ implies $w_t \in B_{\delta_1}(w_1) = B_{\delta_1}(ae_n)$. Noting that the point $P_t=((1-t)g(\delta_3 \hat{e}_1) + at)e_n$ is also in the ball $B_{\delta_1}(w_1)$, \eqref{eqn: p' segment in sup level set} and the definition of $\delta_1$ imply
     \begin{equation*}
         \C(w_t)-\C(P_t) = \int_0^1 \langle D_p \C((1-s)w_t + sP_t), (1-t)\delta_3 e_1 \rangle ds >0,
     \end{equation*}
     and we obtain $\C(w_t) > \C(P_t) > -h_1$.\\
     \emph{Case 3}. Otherwise, we have $ \epsilon_0 \leq (1-t)g(\delta_3\hat{e}_1)+ at \leq a-\epsilon_1$. Then $\delta_3 < \delta_2$ imply $w_t \in B_{\delta_2}([\epsilon_0 e_n, (a-\epsilon_1) e_n])$, we obtain $\C(w_t) > -h_1$ from \eqref{eqn: nbhd of p' segment in sup level set}, and we conclude the claim.\\
     Now, we take $\theta' \in (0, \frac{\pi}{2})$ such that 
     \begin{equation*}
         \cos \theta' < \min\left\{ \frac{\langle D_p\C(w_0), w_1 - w_0\rangle}{\| D_p \C(w_0)\|\|w_1 - w_0\|}, \frac{\langle D_p\C(w_1), w_0 - w_1\rangle}{\| D_p \C(w_1)\|\|w_0 - w_1\|} \right\}.
     \end{equation*}
     Then by Lemma \ref{lem: cone in c-affine section} there exist $\rho_0, \rho_1>0$ such that
     \begin{equation*}
         \left( \K^{\rho_i}_{\theta', v_i} + p_i \right) \subset \{ p \in [\X]_{y_0} | \C(p) \geq -h_1\}
     \end{equation*}
     for $i=0,1$ where $v_i = D_p\C(w_i)$. Let us use the convention $w_2=w_0$, and let $\theta_i$ be in $[0,\frac{\pi}{2})$ such that
     \begin{equation*}
         \langle D_p \C(w_i) , w_i - w_{i+1} \rangle = \| D_p \C(w_i) \| \| w_i - w_{i+1} \| \cos \theta_i
     \end{equation*}
     for $i=0,1$. Note that the left hand side of the above equality is positive so that $\theta_i$ is indeed in $[0, \frac{\pi}{2})$. Then we define $\theta'' = \theta' - \max\{\theta_0, \theta_1\}$. Thanks to the definition of $\theta'$, we have $\theta'' \in (0, \frac{\pi}{2})$. Also, letting $\rho' = \min\{ \rho_0, \rho_1\}$ and $\xi=w_1-w_0$, we have
     \begin{equation*}\label{eqn: small cones in section}
     \begin{aligned}
         & \left( \K^{\rho'}_{\theta'', \xi} +w_0 \right) \subset \left( \K^{\rho_0}_{\theta', v_0} + w_0 \right) \subset \{ p \in[\X]_{y_0} | \C(p) \geq -h_1 \}, \\
         & \left( \K^{\rho'}_{\theta'', -\xi} +w_1 \right) \subset \left( \K^{\rho_1}_{\theta', v_1} + p_1 \right) \subset \{ p \in [\X]_{y_0} | \C(p) \geq -h_1 \}.
     \end{aligned}
     \end{equation*}
     Then, we choose $\epsilon>0$ such that $\|w_0 - w_{\epsilon}\| = \| w_{1-\epsilon}-w_1 \| < \rho'$. Note that $w_{\epsilon} \in \Int{\K^{\rho'}_{\theta'', \xi}}$ and $w_{1-\epsilon} \in \Int{\K^{\rho'}_{\theta'', -\xi}}$. Hence, there exists $r_1>0$ such that
     \begin{equation*}
         B_{r_1}(w_{\epsilon}) \subset \K^{\rho'}_{\theta'', \xi} \quad \textrm{and} \quad B_{r_1}(w_{1-\epsilon}) \subset \K^{\rho'}_{\theta'', -\xi}.
     \end{equation*}
     Also, by the claim \eqref{eqn: open p segment in sup level set}, we obtain $[w_{\epsilon}, w_{1-\epsilon}] \subset \{ p \in [\X]_{y_0} | \C(p) > -h_1 \}$. Then using the openness of $\{ p \in [\X]_{y_0} | \C(p) > -h_1 \}$ together with the compactness of $[w_{\epsilon}, w_{1-\epsilon}]$, we obtain $r_2>0$ such that 
     \begin{equation*}
         B_{r_2}([w_{\epsilon}, w_{1-\epsilon}]) \subset \{ p \in [\X]_{y_0} | \C(p)>-h_1 \}.
     \end{equation*} 
     Let $r = \min\{ r_1, r_2 \}$. Then, together with \eqref{eqn: small cones in section}, we obtain
     \begin{equation*}
        \begin{aligned}
            \left[w_0, w_1\right] & \subset \left( \K^{\rho'}_{\theta'', \xi} +w_0 \right) \cup \left( \K^{\rho'}_{\theta'', -\xi} +w_1 \right) \cup B_r([w_{\epsilon}, w_{1-\epsilon}]) \\ & \subset \{ p \in [\X]_{y_0} | \C(p) \geq -h_1 \}.
        \end{aligned}
     \end{equation*}
     Now, we define $\theta$ by 
     \begin{equation}\label{eqn: choosing theta}
         \cos(\theta) = \max\left\{ \frac{\langle w_{1/2}-w_0, \frac{1}{2}rv + \frac{1}{2}\xi \rangle}{\|w_{1/2}-w_0 \| \| \frac{1}{2}rv + \frac{1}{2}\xi\|},  \frac{7}{8} \right\} = \max\left\{ \frac{\|\xi\|}{\| rv+\xi \|}, \frac{7}{8}\right\},
     \end{equation}
     where $v$ is a unit vector which is perpendicular to $\xi$. Observe that $\theta$ does not change if we replace $v$ with another unit vector that is perpendicular to $\xi$. Let
     \begin{equation}\label{eqn: choosing rho}
         \rho = \left\| \frac{1}{2}rv + \frac{1}{2}\xi \right\|.
     \end{equation}
     Let $p \in (\K^{\rho}_{\theta, \xi}+w_0)$, then we may write $p = t \xi + s v + w_0$ for some $v$ that is perpendicular to $\xi$. Then we observe 
     \begin{equation*}
         \frac{\langle p-w_0, \xi\rangle}{\| p-w_0 \| \| \xi \|} = \frac{\tau \| \xi \| }{\| t \xi + s v\|} \geq \cos(\theta) \geq  \frac{\| \xi \|}{ \|  rv + \xi \| },
     \end{equation*}
     which implies $\frac{s}{t} \leq r$. Hence, if $0 < t < \epsilon$, we obtain
     \begin{equation*}
        \begin{aligned}
             \frac{\langle p-w_0, \xi\rangle}{\| p-w_0 \| \| \xi \|} &= \frac{ \| \xi \|}{\|  \xi + \frac{s}{t} v \| } \\
            &\geq \frac{\| \xi \|}{\| \xi + r v \|} = \frac{\langle \epsilon \xi + \epsilon rv, \xi \rangle}{ \| \epsilon \xi + \epsilon rv \| \| \xi \|} = \frac{\langle (w_\epsilon +\epsilon rv) - w_0, \xi \rangle }{\| (w_\epsilon +\epsilon rv) - w_0 \| \| \xi\|}.
        \end{aligned}
     \end{equation*}
     Since $B_r(w_\epsilon) \subset \K^{\rho'}_{\theta'', \xi}+w_0$, we obtain 
     \begin{equation*}
         \frac{\langle p-w_0, \xi \rangle}{\| p-w_0 \| \| \xi \|} \geq \cos(\theta'').
     \end{equation*}
     In particular, we have $(\K^\rho_{\theta, \xi}+w_0) \subset (\K^{\rho'}_{\theta'', \xi}+w_0)$.\\
     On the other hand, if $\epsilon \leq t \leq \frac{1}{2}$, then 
     \begin{equation*}
         p=(t \xi +w_0) + s v \in B_{r}([w_{\epsilon}, w_{1-\epsilon}])
     \end{equation*}
     as $t\xi + w_0 = w_t$ and $s \leq tr \leq \frac{1}{2}r$. Therefore, we have
     \begin{equation*}
         \K^{\rho}_{\theta,\xi }+w_0 \subset \left(\K^{\rho'}_{\theta'',\xi}+w_0 \right) \cup B_{r}([w_{\epsilon}, w_{1-\epsilon}]).
     \end{equation*}
     Similarly, we can also obtain 
     \begin{equation*}
         \K^{\rho}_{\theta,-\xi }+w_1 \subset \left( \K^{\rho'}_{\theta'',-\xi}+w_1 \right) \cup B_{r}([w_{\epsilon}, w_{1-\epsilon}]).
     \end{equation*}
     Then, noting $\rho > \frac{1}{2} \| \xi \|$, we obtain $[w_0, w_{\frac{1}{2}}] \subset (\K^\rho_{\theta, \xi} + w_0)$ and $[w_{\frac{1}{2}}, w_1] \subset (\K^\rho_{\theta, -\xi} + w_1)$, and thus
     \begin{equation}\label{eqn: double cone in the stick}
        \begin{aligned}
            {[w_0, w_1]} & \subset \left(\K^{\rho}_{\theta, \xi}+w_0\right) \cup \left(K^{\rho}_{\theta,-\xi}+w_1 \right) \\
            & \subset \left( \K^{\rho'}_{\theta'', \xi} +w_0 \right) \cup \left( \K^{\rho'}_{\theta'', -\xi} +w_1 \right) \cup B_r([w_\epsilon, w_{1-\epsilon}])\\
            & \subset \left[ \csec{f_0}{f_1}\right]_{y_0},
        \end{aligned}
     \end{equation}
     which yields the inclusion in the last part of the lemma.
\end{proof}

\begin{Rmk}\label{rmk: outside the double cone}
    Thanks to the choice of $\theta$ \eqref{eqn: choosing theta} and $\rho$ \eqref{eqn: choosing rho}, we have
    \begin{equation*}
        [\X]_{y_0} \setminus \left( \left( \K^\rho_{\theta, \xi}+w_0 \right) \cup \left( K^{\rho}_{\theta,-\xi} + w_1 \right) \right) \subset \left( [\X]_{y_0}\setminus\left( \K_{\theta,\xi}+w_0\right) \right) \cup \left( [\X]_{y_0} \setminus (\K_{\theta, -\xi}+w_1)\right).
    \end{equation*}
    Note that the cones in the right hand side of the above equation do not have the radius $\rho$. Moreover, \eqref{eqn: double cone in the stick} yields
    \begin{equation*}
        \left( \left(\K^{\rho}_{\theta, \xi}+w_0\right) \cup \left(K^{\rho}_{\theta,-\xi}+w_1 \right) \right) \setminus\{w_0,w_1\} \subset \Int{\left[ \csec{f_{0}}{f_{1}} \right]_{y_0}}.
    \end{equation*}
\end{Rmk}

Now we prove the second part of Theorem \ref{thm: main theorem}. Assuming that the cost function $c$ does not satisfy Loeper's property, we will construct an alternative $c$-convex function that is not a $c$-convex function. The idea is to construct a $c$-convex function $\phi$ which have a disconnected section $\csec{\phi}{f}$ for some $c$-affine function $f$ using \ref{lem: non Loeper set} and Lemma \ref{lem: small tilting}. Then we define a function $\phi'$ using $\phi$ which is equal to $f$ in some connected component of $\csec{\phi}{f}$, but not in entire section $\csec{\phi}{f}$, and equal to $\phi$ outside the section. With a careful construction, $\phi'$ can be an alternative $c$-affine function, but $\phi'$ cannot have a $c$-subdifferential in the interior of the set $\{x|\phi'(x)=f(x) \}$.

\begin{Thm}\label{thm: main 2}
Suppose that $c$ does not satisfy Loeper's property. Then there exists an alternative $c$-convex function that is not $c$-convex.
\end{Thm}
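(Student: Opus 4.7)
The plan is to construct an alternative $c$-convex function that fails $c$-convexity, exploiting the witness of Loeper's failure provided by Lemma \ref{lem: non Loeper interior Y}. Following the roadmap outlined in the paper, the construction specifies $\phi$ on two disjoint local pieces---each equal to the same cost-geometry affine up to a different additive constant---separated by a ``mountain'' $c$-affine that rises above the first on a $c$-segment between them. Extending by Lemma \ref{lem: extention of alt. c-conv} gives an alternative $c$-convex function globally, while the height difference between the two pieces obstructs $c$-convexity.

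Concretely, apply Lemma \ref{lem: non Loeper interior Y} to obtain $y_0, y_1 \in \Int{\Y}$, $x_0, x_1 \in \X$, $h_1 \in \R$, and $x_{t'} \in \cseg{y_0}(x_0, x_1)$ such that $f_0 := -c(\cdot, y_0)$ and $f_1 := -c(\cdot, y_1) + h_1$ satisfy $f_1(x_i) < f_0(x_i)$ for $i = 0, 1$ but $f_1(x_{t'}) > f_0(x_{t'})$. Set $\gamma_i := f_0(x_i) - f_1(x_i) > 0$ and, after possibly swapping indices, suppose $\gamma_0 > \gamma_1$. Pick small disjoint open neighborhoods $R_0 \ni x_0$, $R_1 \ni x_1$ contained in $\{f_1 < f_0\}$, and fix a small $\lambda \in (0, \gamma_0 - \gamma_1)$. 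Define $\phi$ on $R_0 \cup R_1$ by $\phi|_{R_0} = f_0|_{R_0}$ and $\phi|_{R_1} = (f_0 + \lambda)|_{R_1}$.

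The first task is to verify $\phi$ is alternative $c$-convex on $R_0 \cup R_1$. For pairs with both endpoints in the same component, the defining $c$-affine of that piece is admissible for the $c$-chord supremum and bounds $\phi$ appropriately. In the crossing case ($a_0 \in R_0, a_1 \in R_1$), the $c$-affine $f_0$ itself dominates $\phi = f_0$ on $R_0$; while on $R_1$, a shifted mountain $f_1 + \mu$ with $\mu$ in the range essentially $(\gamma_1 + \lambda, \gamma_0)$ is admissible (precisely because $\lambda < \gamma_0 - \gamma_1$, after shrinking $R_0, R_1$ to absorb first-order corrections) and dominates $\phi = f_0 + \lambda$ on $R_1$. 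The endpoint equalities $F_{X_0 X_1}(a_i) = \phi(a_i)$ required by the alternative $c$-convex inequality follow from Lemma \ref{lem: existence of touching c} combined with a first-order perturbation of $y_0$ in the direction $-D_y c(x_1, y_0) + D_y c(x_0, y_0)$; this perturbation remains inside $\Y$ thanks to $y_0 \in \Int{\Y}$. Having verified this, Lemma \ref{lem: extention of alt. c-conv} extends $\phi$ to an alternative $c$-convex function $\phibar$ on $\X$ with $\phibar|_{R_0 \cup R_1} = \phi$.

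To show $\phibar$ is not $c$-convex, suppose the contrary. Since $\phibar$ coincides with the smooth $c$-affine $f_0 + \lambda$ on the open set $R_1$, it is differentiable at any $\bar{x} \in R_1$ with $D\phibar(\bar{x}) = -D_x c(\bar{x}, y_0)$. Writing $\phibar = \sup_y \{-c(\cdot, y) + \psi(y)\}$, compactness of $\Y$ attains the supremum at $\bar{x}$ at some $y^*$; the resulting critical-point condition combined with bi-twist forces $y^* = y_0$, and matching values yields $\psi(y_0) = \lambda$. Hence the $c$-affine $-c(\cdot, y_0) + \lambda = f_0 + \lambda$ lies below $\phibar$ on all of $\X$. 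But evaluating at $x_0 \in R_0$ gives $f_0(x_0) + \lambda \leq \phibar(x_0) = f_0(x_0)$, contradicting $\lambda > 0$. The main obstacle in the argument is the crossing-case verification of alternative $c$-convexity: one must produce admissible $c$-affines dominating $\phi$ on both components simultaneously, and the mountain $f_1$ supplied by the failure of Loeper's property is exactly the ingredient that makes this possible. Without this failure (i.e.\ under Loeper's property), no such mountain exists and the construction correctly fails to satisfy alternative $c$-convexity, in accordance with Theorem \ref{thm: main 1}.
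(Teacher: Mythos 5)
Your proposal identifies the right roadmap (local pieces, extension by Lemma \ref{lem: extention of alt. c-conv}, contradiction with the supporting $c$-affine), and your final non-$c$-convexity step is sound. However, the crossing-case verification of alternative $c$-convexity---which you correctly flag as the main obstacle---contains a genuine gap, and this gap is serious enough that the construction as described cannot be repaired without changing its geometry.

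You place the raised piece $\phi|_{R_1}=f_0+\lambda$ on a full neighborhood $R_1$ of $x_1$, and you claim that the admissible $c$-affine $f_1+\mu$ with $\mu$ essentially in $(\gamma_1+\lambda,\gamma_0)$ dominates $\phi$ on $R_1$. But admissibility at $a_1\in R_1$ forces $\mu\leq f_0(a_1)-f_1(a_1)+\lambda$, which for $a_1$ near $x_1$ is approximately $\gamma_1+\lambda$; so the range $(\gamma_1+\lambda,\gamma_0)$ is excluded, not merely tight. Even at the borderline value $\mu=\gamma_1+\lambda$, bi-twist gives $D_x\bigl((f_1+\mu)-(f_0+\lambda)\bigr)(x_1)=-D_xc(x_1,y_1)+D_xc(x_1,y_0)\neq 0$, so $(f_1+\mu)-(f_0+\lambda)$ is strictly negative on one side of $x_1$ in $R_1$, and domination fails. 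The deeper reason is structural: the mountain $f_1$ supplied by Lemma \ref{lem: non Loeper interior Y} satisfies $f_1(x_i)<f_0(x_i)$ at \emph{both} endpoints, so near $x_1$ the mountain is \emph{below} $f_0$, and no fixed upward shift of it can dominate $f_0+\lambda$ on a whole neighborhood of $x_1$ while remaining admissible there. Replacing the single shift $f_1+\mu$ by a family indexed by $x\in R_1$ would require a cone-covering argument (as in Lemma \ref{lem: cone in c-affine section}), but that argument needs the ``mountain'' to lie strictly \emph{above} the piece being dominated and to have a controlled gradient there---neither of which holds near $x_1$.

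The paper avoids this by centering the raised piece at a point $x_{t_0}$ strictly inside the $c$-segment where $f_1>f_0$ and $\tfrac{d}{dt}(f_1-f_0)>0$ (Step 1 of the paper's proof), and by taking the low piece to be the \emph{singleton} $\{x_1\}$ rather than a neighborhood. Domination on the raised ball near $x_{t_0}$ is then obtained by covering it with cones in $[\X]_{y_0}$ via Lemma \ref{lem: cone in c-affine section} and Remark \ref{rmk: dependency of rho}, using a family of perturbed $y$-directions; and there is nothing to dominate near $x_1$ beyond the single point, so no covering issue arises there. Your construction, with both components open, forces a domination requirement on a neighborhood of $x_1$ that the failure-of-Loeper witness does not provide the geometry to satisfy.
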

\begin{proof}
    We will construct an alternative $c$-convex function that does not have a $c$-subdifferential at some point. Let $z_0, z_1, y_0, y_1, w_0, w_1, \xi, \theta, \rho$ and $h_1$ be from Lemma \ref{lem: non Loeper set}. Also, fix $l>0$ such that $ B_l(y_i) \subset \Y$ for $i = 0,1$. We divide the proof into several steps.

    \noindent \textbf{\emph{Step 1)}} Let 
    \begin{equation}\label{eqn: main 2: C}
        \C_{ij}(p,q) = -c(\cexp{y_j}{p}, \cexp{z_i}{q}) + c(z_i,\cexp{z_i}{q})
    \end{equation}
    for $i=0,1$, $j=0,1$. Let $\tilde{\theta} = \frac{\pi}{2}-\frac{\theta}{4}$ and $r<\min\{\frac{l}{2\alpha L}, \mu(\tilde{\theta})\}$, where $\mu(\tilde{\theta})$ is defined in Lemma \ref{lem: small tilting}. Then, for any $v$ such that $\| v \| < r$, we have
    \begin{equation}\label{eqn: main 2: Mv est}
        \| (-D^2_{xy}c(z_i, y_j) ) v \| \leq \| -D^2_{xy}c(z_i, y_j)\|_F \| v \| < \alpha \times \frac{l}{2\alpha L} =\frac{l}{2L}.
    \end{equation}
    Let $M_{ij} = -D^2_{xy}c(z_i,y_j)$. Noting that $B_l(y_j) \subset \Y$ and \eqref{eqn: Lipschitz}, we have $B_{\frac{l}{2L}}(q^i_j) \subset [\Y]_{z_i} $, where $q^i_j = -D_x c(z_i, y_j)$. Hence, we have
    \begin{equation}\label{eqn: main 2: q+Mv in Y}
        q^i_j+M_{ij}v \in [\Y]_{z_i}.
    \end{equation}
    Therefore, ${\C}_{ij}(p, q^i_j+M_{ij}v)$ is well-defined for $\|v\|<r$. We also have $\| v\| < \mu(\tilde{\theta})$ so that, by Lemma \ref{lem: small tilting}, we obtain
    \begin{equation}\label{eqn: main 2: small tilting est}
        \begin{aligned}
            &p \in (\K_{\tilde{\theta}, v}+w_i^j)\cap [\X]_{y_j} \\
            \Rightarrow& \frac{1}{2} \| p-w_i^j\|\|v\| \cos(\tilde{\theta}) \leq {\C}_{ij}(p, q^i_j+M_{ij}v) - {\C}_{ij}(p, q^i_j) \leq \frac{3}{2} \| p-w_i^j\|\|v\| \cos(\tilde{\theta}),
        \end{aligned}
    \end{equation}
    and
    \begin{equation}\label{eqn: main 2: small tilting est 2}
        \begin{aligned}
            &p \in (\K_{\tilde{\theta},-v}+w_i^j)\cap[\X]_{y_j} \\
            \Rightarrow & -\frac{3}{2} \| p-w_i^j\|\|v\| \cos(\tilde{\theta}) \leq {\C}_{ij}(p, q^i_j+M_{ij}v) - {\C}_{ij}(p, q^i_j) \leq -\frac{1}{2} \| p-w_i^j\|\|v\| \cos(\tilde{\theta}),
        \end{aligned}
    \end{equation}
    where $w_i^j = -D_y c(w_i,y_j)$. In particular $w_i = w^0_i$, and we have
    \begin{equation*}
        (\K_{\tilde{\theta},v}+w_i) \cap [\X]_{y_0} \subset \{p \in [\X]_{y_0}| {\C}_i(p,q^i_0+M_{i0}v) \geq {C}_i(p,q^i_0) \}.
    \end{equation*}
    Now, let $V_0 = \K_{\frac{\pi-\theta}{2},-\xi}^r$ and $V_1 = \K^r_{\frac{\pi-\theta}{2},\xi}$. Also, denote
    \begin{equation*}
        \tilde\K =  \left( \K^\rho_{\theta, \xi} + w_0 \right) \cup \left( \K_{\theta,-\xi}^\rho + w_1 \right).
    \end{equation*}
    Then, using Remark \ref{rmk: outside the double cone}, we observe
    \begin{equation}\label{eqn: main 2: cone covering}
        \left(\bigcup_{v\in V_0}(\K_{\tilde{\theta},v}+w_0)\right) \cup \left(\bigcup_{v \in V_1}(\K_{\tilde{\theta},v}+w_1) \right) \cup \tilde{\K} \supset [\X]_{y_0}.
    \end{equation}
    On the other hand, for any $v \in V_0$, we have
    \begin{equation}\label{eqn: main 2: pt in cone0}
        \begin{aligned}
            \langle w_t - w_0, -v \rangle & = t \langle -\xi , v \rangle \geq  t\| \xi \| \| v \| \cos (\frac{\pi-\theta}{2})\\
            & \geq \| w_t -w_0 \| \| v \| \cos(\frac{\pi}{2}-\frac{\theta}{4})
        \end{aligned}
    \end{equation}
    so that $[w_0,w_1] \subset (\K_{\tilde{\theta},v}+w_0)$. Similarly, for any $v \in V_1$, we have
    \begin{equation}\label{eqn: main 2: pt in cone1}
        \begin{aligned}
            \langle w_t - w_1, -v \rangle &= (1-t) \langle \xi, v \rangle \geq (1-t) \| \xi \| \| v \| \cos (\frac{\pi-\theta}{2}) \\
            & \geq \| w_t - w_1 \| \| v \| \cos( \frac{\pi}{2} - \frac{\theta}{4}),
        \end{aligned}
    \end{equation}
    so that $[w_0,w_1] \subset (\K_{\tilde{\theta},-v}+w_1)$. Therefore, we have
    \begin{equation*}
        [w_0, w_1] \subset \left( \bigcap_{v \in V_0} (\K_{\tilde{\theta}, -v}+w_0)\right) \cap \left( \bigcap_{v\in V_1} (\K_{\tilde{\theta},-v}+w_1)\right).
    \end{equation*}
    Moreover, when $t=1$ in \eqref{eqn: main 2: pt in cone0} and $t=0$ in \eqref{eqn: main 2: pt in cone1}, we obtain strict inequalities. Then, using the compactness of $V_0$ and $V_1$, we can obtain $ \rho_0 >0$ such that 
    \begin{equation*}
        B_{\rho_0}(w_0) \subset \bigcap_{v \in V_1}(\K_{\tilde{\theta}, -v}+w_1) \quad \textrm{and} \quad B_{\rho_0}(w_1) \subset \bigcap_{v \in V_0}(\K_{\tilde{\theta},-v}, w_0).
    \end{equation*}
    In addition, since $w_0 \in \Int{\K_{\theta, -\xi}+w_1}$, there exists $\rho_1$ such that 
    \begin{equation}\label{eqn: main 2: ball at p0 in theta cone at p1}
        B_{\rho_1}(w_0) \subset \K_{\theta, -\xi}+w_1.
    \end{equation}

    \noindent \textbf{\emph{Step 2)}} Let $\mathcal{F}_i(p) = \{ {\C}_{i0}(p,q^i_0+M_{i0}v)-c(z_i, y_0) | v \in V_i \}$. Denote $x(p) = \cexp{y_0}{p}$ for $p \in [\X]_{y_0}$. We define a function $\Phi:[\X]_{y_0}\to \R$ by
    \begin{equation*}
        \Phi(p) = \sup (\mathcal{F}_0(p) \cup \mathcal{F}_1(p)\cup \{ -c(x(p), y_1)+h_1\}).
    \end{equation*}
    By the definition of ${\C}_{ij}$ \eqref{eqn: main 2: C}, we have $\mathcal{F}_i(w_0) = -c(z_0, y_0)$ for $i=0,1$. In addition, since $z_0, z_1, y_0, y_1$ are from Lemma \ref{lem: non Loeper set}, we also have $-c(z_i, y_1) + h_1 = -c(z_i, y_0)$. Therefore, we have
    \begin{equation}\label{eqn: main 2: Phi=-c0 at xi}
        \Phi(w_i) = -c(w_i, y_0)
    \end{equation}
    for $i = 0,1$. On the other hand, \eqref{eqn: main 2: small tilting est}, \eqref{eqn: main 2: small tilting est 2}, \eqref{eqn: main 2: cone covering}, and the third part of Lemma \ref{lem: non Loeper set} shows 
    \begin{equation*}
        \Phi(p) \geq -c(x(p), y_0),
    \end{equation*}
    and the equality hold only when $p = w_i$, $i=0,1$. Let $\delta<\rho$, and consider $\tilde{\K}\setminus (B_\delta(w_0) \cup B_\delta(w_1))$. Noting Remark \ref{rmk: outside the double cone}, $\tilde{\K}\setminus (B_\delta(w_0) \cup B_\delta(w_1))$ is compactly contained in $\Int{\left[ \csec{f_{0}}{f_{1}}\right]_{y_0}}$, where $f_{i}$ are from Lemma \ref{lem: non Loeper set}. Therefore, we have
    \begin{equation}\label{eqn: main 2: epsilon delta}
        \epsilon_\delta := \inf\{ f_{1}(x)-f_{0}(x) | -D_yc(x,y_0) \in \tilde{\K}\setminus (B_\delta(w_0) \cup B_\delta(w_1)) \} >0.
    \end{equation}
    Also, \eqref{eqn: main 2: small tilting est} yields 
    \begin{equation*}
        \begin{aligned}
            & p \in \left( \bigcup_{v\in V_i} \K_{\tilde{\theta},v} \right) \cap [\X]_{y_0} \setminus B_\delta(w_i) \\
        \Rightarrow & -c(x(p),y^i_v) +c(w_i, y_v^i)-c(w_i, y_0)+ c(x(p), y_0) \geq \frac{\delta r}{2} \cos(\tilde{\theta}),
        \end{aligned}
    \end{equation*} 
    where $y_v^i = \cexp{w_i}{q^i_0+M_{i0}v}$. Therefore, for any $0<\epsilon < \min\{\epsilon_\delta, \frac{\delta r}{2}\cos(\tilde{\theta}) \}$, we have
    \begin{equation*}
        \overline{\{ p | \Phi(p)< -c(x(p), y_0) + \epsilon \}} \subset B_\delta(w_0) \cup B_\delta(w_1). 
    \end{equation*}
    Hence, if we define
    \begin{equation*}
        \Phi_\epsilon(p) = \left\{ \begin{matrix}
            \max\{\Phi(p), -c(x(p),y_0)+\epsilon\} & \textrm{for }p \in B_\delta(w_1) \\
            \Phi(p) & \textrm{otherwise}
        \end{matrix}\right.,
    \end{equation*}
    and
    \begin{equation*}
        \phi_\epsilon(x) = \Phi_\epsilon(-D_y c(x, y_0) ),
    \end{equation*}
    then $\phi_\epsilon$ is a continuous function. Moreover, the sublevel set $\{x \in \X | \phi_\epsilon(x) \leq -c(x,y_0)+\epsilon \}$ has non-empty interior. Also, if $\phi_\epsilon(x)< -c(x,y_0) +\epsilon$, then $-D_y c(x,y_0) \in B_\delta(w_0)$ by \eqref{eqn: main 2: epsilon delta}. In the following steps, we show that the function $\phi_\epsilon$ is an alternative $c$-convex function with appropriate choice of $\delta>0$ and $\epsilon>0$.

    \noindent \textbf{\emph{Step 3)}} In this step and the following step, we choose $\tilde{\delta}$ and $\tilde{\epsilon}$, the respective values of $\delta$ and $\epsilon$ used in defining $\phi_\epsilon$ in the previous step. In the proof, we may use the notation $v$ and $y_v$ to denote several different points in $\Y$ for convenience, depending on the context.\\
    Note first that if $v \in \Rn$ is such that $ \| v \| \leq \frac{l}{2 \alpha L}$, then \eqref{eqn: main 2: Mv est} yields that we still have \eqref{eqn: main 2: q+Mv in Y}. Define $\eta_0$ by
    \begin{equation*}
        \cos(\eta_0) := \frac{\langle  M_{10}^{-1}(q^1_1 - q^1_0),w_0 -w_1\rangle}{\| M_{10}^{-1}( q_1^1 - q_0^1 )\| \|(w_0-w_1) \|}.
    \end{equation*}
    Then the second part of Lemma \ref{lem: non Loeper set} shows that $\eta_0 \in [0, \frac{\pi}{2})$. Note that the following map is continuous in $p$ and $y$ variables away from $p_0$ and $y_0$ respectively:
    \begin{equation*}
        (p,y) \mapsto \frac{\langle (-D^2_{xy}c(x(p),y_0))^{-1}( -D_x c(x(p), y)+D_x c(x(p), y_0)), w_0 -w_1\rangle}{\| (-D^2_{xy}c(x(p),y_0))^{-1}( -D_x c(x(p), y)+D_x c(x(p), y_0)) \| \| w_0-w_1 \|}.
    \end{equation*}
    Using the continuity, we can obtain $\delta_0<\frac{1}{4}\|w_1 - w_0\|$ and $\sigma_0$ such that if $p \in B_{\delta_0}(w_1)$ and $y \in B_{\sigma_0}(y_1)$, then
    \begin{equation}\label{eqn: main 2: angle between gradient and -w}
         \frac{\langle (-D^2_{xy}c(x(p),y_0))^{-1}( -D_x c(x(p), y)+D_x c(x(p), y_0)), w_0 -w_1\rangle}{\| (-D^2_{xy}c(x(p),y_0))^{-1}( -D_x c(x(p), y)+D_x c(x(p), y_0)) \| \| w_0-w_1 \|} > \cos(\frac{\eta_0}{2}+\frac{\pi}{4}).
    \end{equation}
    Let $\eta_1 = \frac{\eta_0}{2}+ \frac{\pi}{4}$, and note that $\eta_0 < \eta_1 < \frac{\pi}{2}$. On the other hand, for $q^0 = -D_x c(z_0, y)$, we have
    \begin{equation*}\label{eqn: main 2: q dist compare}
        \begin{aligned}
            \| y - y_1\| & =\left \| \int_0^1 \frac{d}{ds}y_s' ds \right \|\\
            &=\left\| \int_0^1 \left( -D^2_{xy}c(z_0, y_s') \right)^{-1}(q^0 - q_1^0)ds \right\| \\
            & \leq \beta \| q^0 - q^0_1\|,
        \end{aligned}
    \end{equation*}
    where $y_s' \in \cseg{z_0}[y_1, y]$. Therefore if $\| v \| < \min\{\frac{l}{2\alpha L}, \frac{\sigma_0}{\alpha \beta} \}$, letting $y_v = \cexp{z_0}{q_1^0 + M_{01}v}$, we obtain
    \begin{equation*}
            \| y_v - y_1 \| \leq \beta \| M_{01} v \| < \alpha \beta \times \frac{\sigma_0}{\alpha \beta} = \sigma_0. 
    \end{equation*}
    In addition, by \eqref{eqn: Lipschitz}, we have
    \begin{equation*}
        \frac{1}{L}\| y_1 - y_0 \| \leq   \| q^1_1 - q^1_0 \| = \| -D_x c(z_1, y_1) + D_x c(z_1, y_0) \| \leq L \|y_1 - y_0 \|.
    \end{equation*}
    Therefore, the triangle inequality yields
    \begin{equation*}
        \frac{1}{L} \| y_1 - y_0 \| - \sigma_0 \leq \| y_v - y_0\| \leq L \|y_1 - y_0 \| + \sigma_0.
    \end{equation*}
    Let $q_v^p = -D_x c(x(p), y_v)$ and $q_0^p = -D_x c(x(p),y_0)$. Then \eqref{eqn: Lipschitz} implies
    \begin{equation*}
        \frac{1}{L^2} \| y_1 - y_0 \| - \frac{1}{L} \sigma_0 \leq \| q_v^p - q_0^p \| \leq L^2 \| y_1 - y_0 \| +L\sigma_0.
    \end{equation*}
    Then, letting
    \begin{equation*}
        \begin{aligned}
            l_0 & = \frac{1}{L^2} \| y_1 - y_0 \| - \frac{1}{L} \sigma_0, \\
            l_1 & = L^2 \| y_1 - y_0 \| +L\sigma_0, \\  
            \eta_2 & = \frac{\eta_1}{2} + \frac{\pi}{4}, \\ 
            \xi^p_v & = (-D^2_{xy}c(x(p),y_0))^{-1}(q_v^p - q^p_0), \\        
        \end{aligned}
    \end{equation*}
    and using Lemma \ref{lem: cone in c-affine section}, we obtain $\tilde{\rho}>0$ such that 
    \begin{equation*}
        \K^{\tilde\rho}_{\eta_2, \xi_v^p}+ p_1 \subset \{ p \in [\X]_{y_0} | -c(x(p), y_v)+ c(x(p_1),y_v) \geq -c(x(p), y_0)+ c(x(p_1),y_0) \},
    \end{equation*}
    for any $p_1 \in B_{\delta_0}(w_1)$ (recall Remark \ref{rmk: dependency of rho}). Then \eqref{eqn: main 2: angle between gradient and -w} and the above inclusion yield that, letting $\tilde{\eta} = \eta_2 - \eta_1$, we have
    \begin{equation}\label{eqn: main 2: cone in -w direction in sections}
        \begin{aligned}
            \K^{\tilde\rho}_{\tilde{\eta}, -\xi}+p_1 & \subset \K^{\tilde{\rho}}_{\eta_2, \xi_v^p}+ p_1 \\
            & \subset \{ p \in [\X]_{y_0} | -c(x(p), y_v)+ c(x(p_1),y_v) \geq -c(x(p), y_0)+ c(x(p_1),y_0) \}.
        \end{aligned} 
    \end{equation}
    for any $p_1 \in B_{\delta_0}(w_1)$ and $\| v \| < \min\{\frac{l}{2\alpha L}, \frac{\sigma_0}{\alpha \beta} \}$. Moreover, since $\delta_0 < \frac{1}{4}\| w_1 - w_0 \|$, for any $p_1 \in B_{\delta_0}(w_1)$ and $p = p_1-d \frac{\xi}{\| \xi \|}$ where $d > \frac{1}{4} \| w_0 - w_1 \|$, we have
    \begin{equation*}
        B_{\gamma'}(p) \subset \K_{\frac{\tilde{\eta}}{4}, -\xi} + p_1,
    \end{equation*}
    where $\gamma' = \frac{1}{2}\| w_1 - w_0 \| \sin(\frac{\tilde{\eta}}{4})$. Fix $\gamma_0<  \frac{1}{2} \gamma'$. Then, for any $p_1 \in B_{\min\{\delta_0, \gamma_0 \}}(w_1)$ and $p_0 \in B_{\min\{ \delta_0, \gamma_0 \}}(w_0)$, we have
    \begin{equation}\label{eqn: main 2: gamma 1}
        B_{\min\{\delta_0, \gamma_0 \}}(p_0) \subset B_{\gamma'} (w_0) \subset \K_{\frac{\tilde{\eta}}{4}, -w}+p_1 .
    \end{equation}
    Next, let $\xi^1 = w^1_1 - w_0^1  = -D_y c(z_1, y_1) +D_y c(z_0, y_1)$. Note that \eqref{eqn: Lipschitz} implies
    \begin{equation*}
        \| \xi^1 \| = \| w^1_1 - w^1_0\| = \| -D_y c(x_1, y_1) +D_y c(x_0, y_1)\| \geq \frac{1}{L}\|z_1 - z_0\|.
    \end{equation*}
    Fix $\eta \in (0,\frac{\pi}{2})$ and let $\delta_1 >0$ be such that $\delta_1 < \frac{1}{2L} \| z_1 - z_0 \|$ and
    \begin{equation}\label{eqn: main 2: delta1}
        B_{L^2 \delta_1}(w_1^1) \subset \K_{\eta, \xi^1}+w_0^1.
    \end{equation}
    Note that $-D_y c(x({B_{\delta_1}(w_1)}),y_1) \subset B_{L^2 \delta_1}(w_1^1)$ by \eqref{eqn: Lipschitz} (recall $x(p) = \cexp{y_0}{p}$). Let $0< r' < \min\{ \frac{l}{2 \alpha L}, \mu(\eta), \frac{\sigma_0}{\alpha\beta} \}$. Then, for $ \tilde{v}= \frac{r'\xi^1}{2\|\xi^1\|}$ and $p^1 \in B_{L^2 \delta_1}(w_1^1)$, Lemma \ref{lem: small tilting} implies that we have
    \begin{equation}\label{eqn: main 2: small tilting y1 positive direction}
        \begin{aligned} 
            \C_{01}(p^1, q^0_1 + M_{01}\tilde{v}) - \C_{01}(p^1, q_1^0) & \geq \frac{r'}{4}\cos(\eta) \| p^1 - w^1_0\| \\
            & \geq \frac{r'}{8L} \cos(\eta) \| z_1 - z_0 \|, 
       \end{aligned}
    \end{equation}
    and
    \begin{equation}\label{eqn: main 2: small tilting y2 negative direction}
        \begin{aligned}
            \C_{01}(p^1, q^0_1 - M_{01}\tilde{v}) - \C_{01}(p^1, q_1^0) & \leq -\frac{r'}{4}\cos(\eta) \| p^1 - w^1_0\| \\
            & \leq -\frac{r'}{8L} \cos(\eta) \| z_1 - z_0 \|.
        \end{aligned}
    \end{equation}
    Define
    \begin{equation*}
        \lambda(\delta) = \sup \left\{ |\C_{10}(p,q^1_1) - \C_{10}(p, q^1_0)| \big| p \in \overline{B_{\delta}(w_1)} \right\}.
    \end{equation*}
    Then $\lambda$ is a continuous function that decreases as $\delta$ decreases. In addition, the first part of Lemma \ref{lem: non Loeper set} implies $\lim_{\delta \to 0}\lambda(\delta) = 0$. Therefore, there exists $\delta_2>0$ such that 
    \begin{equation*}
        \lambda(\delta_2) < \frac{r'}{16L} \cos(\eta) \| z_1 -z_0 \|.
    \end{equation*}
    Then the above inequality together with \eqref{eqn: main 2: small tilting y1 positive direction} and \eqref{eqn: main 2: small tilting y2 negative direction} implies that for any $p \in B_{\min\{ \delta_1, \delta_2\}}(w_1)$, denoting $p^1 = -D_y c(x(p), y_1) $, we have
    \begin{equation*}
        \begin{aligned}
            \C_{01}(p^1, q_1^0 + M_{01}\tilde{v}) -\C_{01}(p^1, q^0_0) & \geq \C_{01}(p^1, q^0_1 + M_{01}\tilde{v}) - \C_{01}(p^1, q_1^0) - \lambda(\delta_2) \\
            & \geq \frac{r'}{8L} \cos(\eta) \| z_1 - z_0 \| - \frac{r'}{16L} \cos(\eta) \| z_1 -z_0 \| \\
            & = \frac{r'}{16L} \cos(\eta) \| z_1 -z_0 \|,
        \end{aligned}
    \end{equation*}
    and
    \begin{equation*}
        \begin{aligned}
            \C_{01}(p^1, q_1^0 - M_{01}\tilde{v}) -\C_{01}(p^1, q^0_0) & \leq \C_{01}(p^1, q^0_1 - M_{01}\tilde{v}) - \C_{01}(p^1, q_1^0) + \lambda(\delta_2) \\
            & \leq -\frac{r'}{8L}\cos(\eta) \| z_1 - z_0\| + \frac{r'}{16L} \cos(\eta) \| z_1 -z_0 \| \\
            & = -\frac{r'}{16L} \cos(\eta) \| z_1 -z_0 \|.
        \end{aligned}
    \end{equation*}
    Then, denoting $y_{v} = \cexp{x_0}{q_1^0 + M_{01}{v}}$, and using the first part of Lemma \ref{lem: non Loeper set}, the above inequalities become
    \begin{equation}\label{eqn: main 2: small tilting y1 up}
        -c(x(p), y_{\tilde{v}} ) + c(z_0, y_{\tilde{v}}) + c(x(p), y_0) -c(z_0, y_1)+h_1 \geq  \frac{r'}{16L}\cos(\eta) \| z_1 - z_0 \|,
    \end{equation}
    and
    \begin{equation}\label{eqn: main 2: small tilting y1 down}
        -c(x(p) , y_{-\tilde{v}}) + c(z_0, y_{-\tilde{v}}) + c(x(p), y_0) - c(z_0, y_1)+h_1 \leq -\frac{r'}{16L} \cos(\eta) \| z_1 - z_0 \|.
    \end{equation}
    Then, noting that the functions
    \begin{equation*}
        p_0 \mapsto -c(x(p_1), y_{\tilde{v}} ) + c(x(p_0), y_{\tilde{v}}) + c(x(p_1), y_0) -c(x(p_0), y_1), 
    \end{equation*}
    and
    \begin{equation*}
        p_0 \mapsto -c(x(p_1) , y_{-\tilde{v}}) + c(x(p_0), y_{-\tilde{v}}) + c(x(p_1), y_0) - c(x(p_0), y_1)
    \end{equation*}
    are (Lipschitz) continuous in $p_0$ variable, \eqref{eqn: main 2: small tilting y1 up} and \eqref{eqn: main 2: small tilting y1 up} imply that we can obtain $\gamma_1>0$ such that if $p_0 \in B_{\gamma_1}(w_0)$ and $p_1 \in B_{\min\{ \delta_1, \delta_2 \}}(w_1)$, then
    \begin{equation*}
        -c(x(p_1), y_{\tilde{v}} ) + c(x(p_0), y_{\tilde{v}}) + c(x(p_1), y_0) -c(x(p_0), y_1)+h_1 \geq \frac{r'}{32L}\cos(\eta) \| z_1 - z_0 \|,
    \end{equation*}
    and 
    \begin{equation*}
        -c(x(p_1) , y_{-\tilde{v}}) + c(x(p_0), y_{-\tilde{v}}) + c(x(p_1), y_0) - c(x(p_0), y_1)+h_1 \leq - \frac{r'}{32L}\cos(\eta)\| z_1 - z_0 \|.
    \end{equation*}
    In particular the intermediate value theorem yields that if $\epsilon< \frac{r'}{32L}\cos(\eta) \| z_1 - z_0 \|$, then for any $p_0 \in B_{\gamma_1}(w_0)$ and $p_1 \in B_{\min\{ \delta_1, \delta_2\} }(w_1)$, there exists $t(p_0,p_1) \in [0,1]$ such that
    \begin{equation*}
        \begin{aligned}
            -c(x(p_1),y(p_0,p_1)) + c(x(p_0) , y(p_0,p_1)) +c(x(p_1), y_0) -c(x(p_0),y_1)+h_1 & = \epsilon,
        \end{aligned}
    \end{equation*}
    where $y(p_0,p_1) = \cexp{x_0}{q_1^0 + t(p_0,p_1)M_{01}\tilde{v}}$. We can write the above equality as
    \begin{equation}\label{eqn: main 2: t(p)}
        -c(x(p_1), y(p_0,p_1)) + c(x(p_0) , y(p_0,p_1)) - c(x(p_0),y_1) + h_1 = -c(x(p_1), y_0) +\epsilon.
    \end{equation}
    We choose 
    \begin{equation*}
        \tilde{\delta} = \frac{1}{4}\min\{ \delta_0, \delta_1, \delta_2, \rho_0, \rho_1, \tilde{\rho}, \gamma_0, \gamma_1 \}.
    \end{equation*}
    Note that the third part of Lemma \ref{lem: non Loeper set} yields $\| w_1 - w_0 \| < 2\rho$, and \eqref{eqn: main 2: delta1} implies $\tilde{\delta}< \rho$ which we required in \emph{Step 2}.

    \noindent \textbf{\emph{Step 4)}} We recall that we assumed $\epsilon < \min\{\epsilon_\delta, \frac{\delta r}{2}\cos(\tilde{\theta})\}$ in \emph{Step 1}. Therefore, since we have chosen a value for $\tilde{\delta}$, we use
    \begin{equation}\label{eqn: main 2: epsilon bound}
        \epsilon < \min\{\epsilon_{\tilde\delta}, \frac{\tilde{\delta} r}{2}\cos(\tilde{\theta})\}
    \end{equation}
    hereafter. To choose $\tilde{\epsilon}$, we will need to describe the idea for proving that $\phi_\epsilon$ is alternative $c$-convex for $\epsilon = \tilde{\epsilon}$. Note first that the function $\phi(x) = \Phi(-D_y c(x, y_0))$ is a $c$-convex function. Then Lemma \ref{lem: c-conv is alt c-conv} yields that $\phi$ is an alternative $c$-convex function. In addition, the function $x \mapsto \max\{\phi(x), -c(x,y_0) +\epsilon\}$ is also $c$-convex and alternative $c$-convex. \\
    To show that $\phi_\epsilon$ is alternative $c$-convex, we need to check that, for any $X_i=(x_i, \phi_\epsilon(x_i)) \in \graph{\phi_\epsilon}$, we have
    \begin{equation}\label{eqn: main 2: phi epsilon alt c-conv}
        \phi_\epsilon (x) \leq F_{X_0 X_1}(x),
    \end{equation}
    for any $x \in \X$. If $x_i \in \{ x \in \X | \phi_\epsilon(x) = \max\{ \phi(x), -c(x,y_0)+\epsilon\} \}$ for $i = 0,1$, then we have
    \begin{equation*}
        F_{X_0 X_1}(x) \geq \max\{\phi(x), -c(x,y_0)+\epsilon \} \geq \phi_\epsilon(x).
    \end{equation*}
    On the other hand, if $\phi_\epsilon(x) = \phi(x)$, then letting $X_i' =(x_i, \phi(x_i))$, by Lemma \ref{lem: ordered F}, we have
    \begin{equation*}
        F_{X_0 X_1}(x) \geq F_{X_0' X_1'}(x) \geq \phi(x) = \phi_\epsilon(x).
    \end{equation*}
    Hence, we only need to consider the cases where $\phi(x_0) < -c(x_0, y_0) + \epsilon$ and $\phi_\epsilon(x) = -c(x,y_0)+\epsilon$. we divide this into a few cases and apply different methods to show \eqref{eqn: main 2: phi epsilon alt c-conv} in each case. Let $p_i = -D_y c(x_i, y_0)$ and $p = -D_y c(x, y_0)$. Then, \eqref{eqn: main 2: epsilon delta} yields that, choosing $\epsilon < \epsilon_\delta$, we have $p_0 \in B_\delta(w_0)$ and $p \in B_\delta(w_1)$. Hence, to show \eqref{eqn: main 2: phi epsilon alt c-conv}, we need to show that, for any $p \in B_\delta(w_1)$, there exists $y \in Y$ and $h \in \R$ such that 
    \begin{equation*}
        \begin{aligned}
            &-c(x_i,y) +h \leq \phi_{\epsilon}(x_i), \quad i=0,1, \\
            &-c(x(p),y)+h \geq \phi_\epsilon(x(p)).
        \end{aligned}
    \end{equation*}\\
    \emph{Case 1: $p_1 \in B_{2\tilde{\delta}}(w_1)$}.\\
    In this case, since $2 \tilde{\delta} \leq \min\{ \delta_1, \delta_2\}$ and $\tilde{\delta}<\gamma_1$, we obtain $y'=y(p_0,p_1)$ that satisfies \eqref{eqn: main 2: t(p)}. Let 
    \begin{equation*}
        f'(x) = -c(x, y') +c(x(p_0), y') -c(x(p_0),y_1)+h_1,
    \end{equation*}
    then \eqref{eqn: main 2: t(p)} yields
    \begin{equation}\label{eqn: main 2: fy'=cy0+epsilon at x1}
        f'(x(p_1)) = -c(x(p_1), y_0) + \epsilon.
    \end{equation}
    Then, noting that $\tilde{v} = \frac{r'}{2} < \min\{ \frac{l}{2\alpha L}, \frac{\sigma_0}{\alpha \beta}\}$ and $\delta < \delta_0$, we can apply \eqref{eqn: main 2: cone in -w direction in sections} and obtain
    \begin{equation}\label{eqn: main 2: case 1: covering cone in -w direction}
        \begin{aligned}
            \K_{\tilde{\eta}, -\xi}^{\tilde{\rho}} + p_1 & \subset \{p \in [\X]_{y_0}|-c(x(p), y')+c(x(p_1), y') \geq - c(x(p), y_0) +c(x(p_1), y_0) \} \\
            & = \{p \in [\X]_{y_0}|f'(x(p)) \geq - c(x(p), y_0) + \epsilon\}.
        \end{aligned}
    \end{equation}
    On the other hand, let 
    \begin{equation*}
        V_{\textit{case1}} = \left\{ v | \langle -\xi, v \rangle \leq \| \xi \| \| v \| \cos(\frac{\pi}{2}+\frac{\tilde{\eta}}{2}), \| v \| = \mu(\frac{\pi}{2}-\frac{\tilde{\eta}}{4})\right\}.
    \end{equation*}
    Observe that for any $v \in V_{\textit{case1}}$, we have $\K_{\frac{\tilde{\eta}}{4},-\xi} \subset \K_{\frac{\pi}{2}-\frac{\tilde{\eta}}{4},-v}$. Then \eqref{eqn: main 2: gamma 1} yields that we have
    \begin{equation}\label{eqn: main 2: case 1: ball at p0 in cones}
        B_{\tilde{\delta}}(w_0) \subset B_{2\tilde{\delta}}(p_0) \subset \K_{\frac{\tilde{\eta}}{4},-\xi}+p_1 \subset \K_{\frac{\pi}{2} - \frac{\tilde{\eta}}{4}, -v}+p_1.
    \end{equation}
    Also, for any $p \not\in \K_{\tilde{\eta},-\xi}+p_1$, there exists $v \in V_{\textit{case1}}$ such that $p \in \K_{\frac{\pi}{2}-\frac{\tilde{\eta}}{4},v}+p_1$. Then Lemma \ref{lem: small tilting} yields that, for any $p_1 \in B_{2\tilde{\delta}}(w_1) \setminus \K_{\tilde{\eta},-\xi}$, there exists $v \in V_{\textit{case1}}$ such that, for any $p \in B_{\tilde{\delta}}(w_1) \subset B_{2\tilde{\delta}}(p_1)$, we have
    \begin{equation}\label{eqn: main 2: case 1: tilting y0}
        \begin{aligned}
            & -c(x(p), y_v) +c(x(p_1), y_v)+c(x(p), y_0)-c(x(p_1), y_0) \\
            \geq & \frac{1}{2} \| p-p_1\| \|v \| \cos(\frac{\pi}{2}-\frac{\tilde{\eta}}{4}) \\
            \geq & 0,
        \end{aligned}
    \end{equation}
    and
    \begin{equation}\label{eqn: main 2: case 1: tilting y0 at x0} 
        \begin{aligned}
            &-c(x(p), y_v) +c(x(p_1), y_v)+c(x(p), y_0)-c(x(p_1), y_0) \\
            \leq & -\frac{1}{2} \| p - p_1 \| \| v \| \cos(\frac{\pi}{2}-\frac{\tilde{\eta}}{4}) \\
            \leq & -\frac{1}{4}\| w_0 - w_1 \| \| v \| \cos (\frac{\pi}{2}- \frac{\tilde{\eta}}{4}).
        \end{aligned}
    \end{equation} 
    where $y_v = \cexp{z_1}{q^1_0+M_{10}v}$. Therefore, denoting 
    \begin{equation*}
        f_v(x) = -c(x,y_v) + c(x(p_1), y_v) -c(x(p_1), y_0)+\epsilon,
    \end{equation*}
    \eqref{eqn: main 2: case 1: tilting y0} implies
    \begin{equation}\label{eqn: main 2: case 1: fv section covering}
        B_{\tilde{\delta}}(w_1) \setminus (\K_{\tilde{\eta},-\xi} +p_1)\subset \bigcup_{v \in V_{\textit{case1}}} \left\{p \in [X]_{y_0} \big| f_v(x(p)) \geq -c(x(p), y_0)+\epsilon
         \right\}.
    \end{equation}
    Also, for any $v \in V_{\textit{case1}}$, \eqref{eqn: main 2: case 1: ball at p0 in cones} and \eqref{eqn: main 2: case 1: tilting y0 at x0} yield that, for any $p \in B_{\tilde{\delta}}(w_0)$, we have
    \begin{equation*}
        f_v(x(p)) \leq - c(x(p), y_0) + \epsilon -\frac{1}{4}\| p_0 - p_1 \| \| v \| \cos (\frac{\pi}{2}- \frac{\tilde{\eta}}{4}).
    \end{equation*}
    Therefore, if we have 
    \begin{equation}\label{eqn: main 2: case 1: epsilon}
        \epsilon < \frac{1}{4}\| p_0 - p_1 \| \| v \| \cos(\frac{\pi}{2}- \frac{\tilde{\eta}}{4} ),
    \end{equation}
    then we obtain
    \begin{equation}\label{eqn: main 2: case 1: fv around x0}
            f_v(x(p)) \leq - c(x(p), y_0)
    \end{equation}
    for any $p \in B_{\tilde{\delta}}(w_0)$. Using \eqref{eqn: main 2: case 1: covering cone in -w direction} and \eqref{eqn: main 2: case 1: fv section covering}, we obtain
    \begin{equation}\label{eqn: main 2: case 1: ball covered by section}
        \begin{aligned}
             B_{\tilde{\delta}}(w_1) \subset & \{p \in [\X]_{y_0}|f'(x(p)) \geq f_{0}(x(p)) + \epsilon\} \\
             &\cup \bigcup_{v \in V_{\textit{case1}}} \left\{p \in [X]_{y_0} \big| f_v(x(p)) \geq f_{0}(x(p))+\epsilon
         \right\}
        \end{aligned}
    \end{equation}
    (recall $f_{0}(x) = -c(x, y_0)$ defined in \ref{lem: non Loeper set}). In addition, by definition of $f_v$ and \eqref{eqn: main 2: case 1: fv around x0}, we observe that $f_v$ is a $c$-affine function such that 
    \begin{equation*}
        f_v(x(p)) \leq c(x(p), y_0) \leq \phi_\epsilon (x(p)), \quad \forall p \in B_{\tilde{\delta}}(w_0),
    \end{equation*}
    and
    \begin{equation*}
        f_v(x(p_1)) = -c(x(p_1), y_0) + \epsilon \leq \phi_{\epsilon}(x(p_1)).
    \end{equation*}
    Therefore, if $\phi_\epsilon(x) = -c(x,y_0)+\epsilon$, then noting $p=-D_y c(x, y_0) \in B_{\tilde{\delta}}(w_1)$, \eqref{eqn: main 2: case 1: ball covered by section} yields that we have
    \begin{equation*}
        F_{X_0' X_1'}(x) \geq \sup(\{ f'(x)\} \cup\{f_v(x)| v \in V_{\textit{case1}} \}) \geq f_{0}(x) + \epsilon = \phi_\epsilon(x).
    \end{equation*}\\
    \emph{Case 2: $p_1 \in \bigcup_{v \in V_1}(\K_{\tilde{\theta},v}+w_1) \setminus B_{2\tilde{\delta}}(w_1)$.}\\
    In this case, there exists $v \in V_1$ such that $p_1 \in \K_{\tilde{\theta},v}+w_1$. Since the cone $\K_{\tilde{\theta},v}$ does not change when we multiply a positive number to $v$, we can assume $\| v \| =\frac{\min\{1, \tilde{\delta} \}}{3\max\{1,L\diam{\X}\}} r$ (note that $\| v \| < r$). Then, using \eqref{eqn: c Lipschitz} and \eqref{eqn: Lipschitz}, for any $p \in B_{\tilde{\delta}}(w_1)$, we obtain
    \begin{equation}\label{eqn: main 2: case 2: Lipschitz}
        \left| -c(x(p), y_0) + c(x(p), y_v) -c(x(w_1), y_v) +c(x(w_1), y_0) \right| \leq 2\clip \| y_0 - y_v \| \leq \frac{1}{3} L r \alpha \clip,
    \end{equation}\
    where $y_v = \cexp{z_1}{q^1_0+M_{10}v}$. On the other hand, by the definition of $\Phi_\epsilon$ and \eqref{eqn: main 2: small tilting est}, we obtain 
    \begin{equation*}
        \begin{aligned}
            \Phi_\epsilon(p_1) +c(x(p_1), y_0) & \geq -c(x(p_1), y_v) +c(z_1, y_v) + c(x(p_1),y_0) - c(z_1, y_0) \\
            & \geq \frac{1}{2} \| p_1 - w_1 \| \| v \| \cos(\tilde{\theta}) \\
            & \geq \tilde{\delta}r\cos(\tilde{\theta}),
        \end{aligned}
    \end{equation*}
    where we have used $p_1 \not\in B_{2\tilde{\delta}}(w_1)$ and $\| v \| \leq r$ in the last inequality. Also, \eqref{eqn: main 2: small tilting est} with \eqref{eqn: Lipschitz} yields 
    \begin{equation*}
        \begin{aligned}
            &-c(x(p_1) , y_v ) + c(x(w_1), y_v) - c(x(w_1), y_0) +c(x(p_1), y_0) \\
            \leq& \frac{3}{2}\| p_1 - w_1 \| \| v \| \cos( \tilde{\theta}) \\
            \leq& \frac{1}{2}\frac{\diam{[\X]_{y_0}}}{\max\{1,L\diam{\X}\}}\min\{ 1, \tilde{\delta} \} r \cos(\tilde{\theta}) \\
            \leq& \frac{1}{2}\tilde{\delta}r\cos(\tilde{\theta}).
        \end{aligned}
    \end{equation*}
    Therefore, we obtain
    \begin{equation}\label{eqn: main 2: case 2: around p1}
        \begin{aligned}
            \Phi_\epsilon(p_1) & \geq \tilde{\delta}r \cos(\tilde{\theta}) - c(x(p_1), y_0) \\
            & \geq \frac{1}{2} \tilde{\delta}r \cos(\tilde{\theta}) -c(x(p_1) , y_v ) + c(x(w_1), y_v) - c(x(w_1), y_0).
        \end{aligned}
    \end{equation}
    We also observe that, since $p_0 \in B_{\tilde{\delta}}(w_0)$, \eqref{eqn: main 2: small tilting est 2} implies
    \begin{equation}\label{eqn: main 2: case 2: around p0}
        \begin{aligned}
            & -c(x(p_0), y_v)+c(x(w_1), y_v)+c(x(p_0), y_0) -c(x(w_1), y_0) \\
            \leq& -\frac{1}{2}\|p_0 - w_1\| \| v \| \cos ( \tilde{\theta}) \\
            \leq& - \frac{1}{2}\times 2\tilde{\delta} \times \frac{\min\{1, \tilde{\delta} \}}{3\max\{1,L\diam{\X}\}} r \cos( \tilde{\theta})  \\
            =& - \frac{\min\{1, \tilde{\delta} \}}{3\max\{1,L\diam{\X}\}} \tilde{\delta} r \cos(\tilde{\theta}).
        \end{aligned}
    \end{equation}
    If we have
    \begin{equation}\label{eqn: main 2: case 2: epsilon}
        \epsilon < \min\left\{ \frac{1}{3} Lr \alpha \clip, \frac{1}{4} \tilde{\delta}r\cos(\tilde{\theta}), \frac{\min\{1, \tilde{\delta} \}}{6\max\{1,L\diam{\X}\}} \tilde{\delta} r \cos(\tilde{\theta}) \right\},
    \end{equation}
    then \eqref{eqn: main 2: case 2: around p1} and \eqref{eqn: main 2: case 2: around p0} imply
    \begin{equation*}
        \Phi_\epsilon(p_1) \geq -c(x(p_1), y_v) + c(x(w_1), y_v) -c(x(w_1), y_0) + 2\epsilon,
    \end{equation*}
    and 
    \begin{equation*}
        \Phi(p_0) \geq -c(x(p_0), y_0) \geq -c(x(p_0), y_v)+c(x(w_1), y_v) - c(x(w_1), y_0) + 2\epsilon 
    \end{equation*}
    respectively. On the other hand, \eqref{eqn: main 2: case 2: Lipschitz} yields that for any $p \in B_{\tilde{\delta}}(w_0)$, we have
    \begin{equation*}
        -c(x(p), y_v) + c(x(w_1), y_v) -c(x(w_1), y_0) + 2\epsilon \geq -c(x(p), y_0) + \epsilon.
    \end{equation*}
    Thus, for any $p \in B_{\tilde{\delta}}(w_1)$ such that $\Phi_\epsilon (p) = -c(x(p), y_0)+ \epsilon$, we obtain
    \begin{equation*}
        \begin{aligned}
            F_{X_0 X_1}(x(p)) & \geq -c(x(p), y_v) +c(x(w_1), y_v) - c(x(w_1), y_0) + 2\epsilon \\
            & \geq -c(x(p), y_0)+\epsilon \\
            &= \phi_\epsilon(x(p)).
        \end{aligned}
    \end{equation*}
    This concludes \emph{Case 2}.\\
    \emph{Case 3: otherwise.}\\
    We have $p_1 \in \left( \K_{\theta, -\xi}+w_1 \right)\setminus B_{2\tilde{\delta}}(w_1)$. Note that we also have $p_0 \in  (\K_{\theta, -\xi}+w_1) \setminus B_{2\tilde{\delta}}(w_1)$ by \eqref{eqn: main 2: ball at p0 in theta cone at p1}. Let $w_s \in [w_0, w_1]$ be such that $\| w_1 - w_s \| = \frac{3}{2}\tilde{\delta}$. Then we have
    \begin{equation*}
        B_{\tilde{\delta}}(w_1) \subset \K_{ \nu_1, \xi}+w_s,
    \end{equation*}
    where $\sin(\nu_1) = \frac{2}{3}$. On the other hand, let $\cos(\nu_0) \leq \frac{\tilde{\delta}}{4L\diam{[\X]}}$. Then, for any $p \in \left( \K_{\theta, -\xi} +w_1 \right) \setminus B_{2\tilde{\delta}}(w_1)$, using the third part of Lemma \ref{lem: non Loeper set}, we observe
    \begin{equation*}
        \begin{aligned}
            \langle p-w_s , -\xi \rangle & = \langle p-w_1+w_1 -w_s , -\xi \rangle \\
            & = \langle p-w_1, -\xi \rangle -\frac{3}{2}\tilde{\delta}\| \xi \| \\
            & \geq 2\tilde{\delta}\| \xi \| \cos (\theta) - \frac{3}{2}\tilde{\delta} \| \xi \| \\
            & = \tilde{\delta}\| \xi \| (2 \cos(\tilde{\theta})-\frac{3}{2}) \\
            & \geq \frac{1}{4} \tilde{\delta} \| \xi \| \\
            & \geq \| \xi \| \| p - w_s \| \frac{\tilde{\delta}}{4\diam{[\X]_{y_0}}} \\
            & \geq \| \xi \| \| p - w_s \| \cos(\nu_0),
        \end{aligned}
    \end{equation*}
    where we have used \eqref{eqn: Lipschitz}. Therefore, taking $\nu = \max\{ \nu_0, \nu_1 \}$, and $v = \mu(\nu) \frac{\xi}{\| \xi \|}$, Lemma \ref{lem: small tilting} yields that for any $p \in B_{\tilde{\delta}}(w_1)$, we have
    \begin{equation*}
        \begin{aligned}
            -c(x(p), y_v)+c(x(w_s), y_v) -c(x(w_s), y_0) & \geq -c(x(p), y_0) + \frac{1}{2}\| p-w_s \| \| v \| \cos(\nu) \\
            & \geq -c(x(p), y_0) + \frac{1}{4} \tilde{\delta} \mu(\nu)\cos(\nu)
        \end{aligned}
    \end{equation*}
    and for any $p \in \left( \K_{\theta, -\xi}+w_1 \right) \setminus B_{2\tilde{\delta}}(w_1)$, we have
    \begin{equation*}
        \begin{aligned}
            -c(x(p), y_v) +c(x(w_s), y_v) -c(x(w_s), y_0) & \leq -c(x(p), y_0) - \frac{1}{2}\| p-w_s \| \| v \| \cos(\nu) \\
            & \leq -c(x(p), y_0).
        \end{aligned}
    \end{equation*}
    Thus, if we had 
    \begin{equation}\label{eqn: main 2: case 3: epsilon}
        \epsilon < \frac{1}{4} \tilde{\delta} \mu(\nu)\cos(\nu),
    \end{equation}
    then, noting $p_0, p_1 \in \left( \K_{\theta, -\xi}+w_1 \right) \setminus B_{2\tilde{\delta}}(w_1) $, we obtain
    \begin{equation*}
        \phi_\epsilon(x(p_i)) \geq -c(x(p_i), y_0) \geq -c(x(p_i), y_v) +c(x(w_s), y_v) -c(x(w_s), y_0)
    \end{equation*}
    and, for any $p \in B_{\tilde{\delta}}(w_1)$ such that $\Phi_\epsilon(p) = -c(x(p), y_0) + \epsilon$, we have
    \begin{equation*}
        \phi_\epsilon(x(p)) = -c(x(p), y_0) + \epsilon \geq -c(x(p), y_v)+c(x(w_s), y_v) -c(x(w_s), y_0).
    \end{equation*}
    This concludes \emph{Case 3}.

    \noindent \textbf{\emph{Step 5)}} We choose $\tilde{\epsilon}>0$ that satisfies \eqref{eqn: main 2: epsilon bound}, \eqref{eqn: main 2: case 1: epsilon}, \eqref{eqn: main 2: case 2: epsilon}, \eqref{eqn: main 2: case 3: epsilon}. Then, \emph{Step 4} yields that $\phi_\epsilon$ is an alternative $c$-convex function. However, the set
    \begin{equation*}
        \{ x \in \X | \phi_\epsilon (x) = -c(x, y_0) + \epsilon\} = \{ x \in \X | \phi(x) \leq -c(x, y_0)+\epsilon\}\cap\cexp{y_0}{B_{\tilde{\delta}}(w_1)}
    \end{equation*}
    has non-empty interior as we know $\phi(z_1) = -c(z_1, y_0) < -c(z_1, y_0) + \epsilon$ and $\phi_\epsilon$ is continuous. Let $x' \in \Int{\{ x \in \X | \phi_\epsilon (x) = -c(x, y_0) + \epsilon\}}$. Then $x'$ is a differentiable point of $\phi_\epsilon$ and $D_x \phi_\epsilon(x') = -D_x c(x', y_0)$. If $\phi_\epsilon$ is a $c$-convex function, then $\phi_\epsilon$ must have a $c$-subdifferential at $x'$ by Lemma \ref{lem: c-subdifferential}. Let $y'\in \Y$ be a $c$-subdifferential at $x'$. Then, by the definition of $c$-subdifferential, we obtain that
    \begin{equation*}
        -D_x c(x', y') = D \phi_\epsilon(x') = -D_x c(x', y_0).
    \end{equation*}
    Using twist condition, we obtain $y' = y_0$. The definition of $c$-subdifferential yields
    \begin{equation*}
        \phi_\epsilon(x) \geq -c(x, y_0) -c(x', y_0) + \phi_\epsilon(x') = -c(x, y_0) + \epsilon.
    \end{equation*}
    The construction of $\phi_\epsilon$, however, shows that we have 
    \begin{equation*}
        -\phi(x_0) = -c(z_0, y_0) < -c(z_0, y_0) + \epsilon.
    \end{equation*}
    Thus, $\phi_\epsilon$ cannot be a $c$-convex function, i.e. $\phi_\epsilon$ is an alternative $c$-convex function that is not a $c$-convex function.
\end{proof}

\bibliographystyle{amsalpha}

\bibliography{ref.bib}

\end{document}